\documentclass{amsart}

\usepackage{amsmath}
\usepackage{amssymb}
\usepackage{amscd, mathtools, comment}
\usepackage[colorlinks,pagebackref=true]{hyperref}
\usepackage[all]{xy}

\usepackage{enumerate}

\numberwithin{equation}{section}

\newcommand{\old}[1]{{\color{red} #1}}

\def\pd{\operatorname{pd}}
\def\Ext{\operatorname{Ext}}
\def\Tor{\operatorname{Tor}}
\def \rfed{\operatorname{rfed}}
\def \extdeg {\operatorname{ext.deg}} 
\def\Ker{\operatorname{Ker}}

\newcommand{\fkm}{\mathfrak{m}}
\newcommand{\fkn}{\mathfrak{n}}

\newcommand{\fkp}{\mathfrak{p}}
\newcommand{\fkq}{\mathfrak{q}}

\newcommand{\fkM}{\mathfrak{M}}
\newcommand{\fkN}{\mathfrak{N}}

\newcommand{\rme}{\mathrm{e}}

\newcommand{\rmr}{\mathrm{r}}

\newcommand{\rmv}{\mathrm{v}}



\def\depth{\operatorname{depth}}

\def\Ext{\operatorname{Ext}}

\def\height{\operatorname{ht}}

\def\Hom{\operatorname{Hom}}

\def\m{\mathfrak{m}}

\def\p{\mathfrak{p}}
 
\def\rhom{\operatorname{\mathbf{R}Hom}}

\def\spec{\operatorname{Spec}}

\def\syz{\mathrm{\Omega}}

\def\Tor{\operatorname{Tor}}

\def \fd {\operatorname{fd}}
\def \pd {\operatorname{pd}}    
\def \rfd {\operatorname{Rfd}}
\def \D {\mathcal D}
\def \n {\mathfrak n}     
\def \lotimes {\otimes^{\mathbf L}} 

\def\@citecolor{blue}
\def\@linkcolor{blue}
\def\@urlcolor{blue}
\def\@urlcolor{blue}

\def\depth{\operatorname{depth}}
\def\pd{\operatorname{pd}}
\def\Hom{\operatorname{Hom}}

\def\extdeg{\operatorname{ext.deg}}
\def\sup{\operatorname{sup}}
\def\uac1{\operatorname{uac1}}
%
%
%
%

\theoremstyle{plain} 
\newtheorem{Theorem}{Theorem}[section]
\newtheorem{Lemma}[Theorem]{Lemma}
\newtheorem{Corollary}[Theorem]{Corollary}

\newtheorem{thm}[Theorem]{Theorem} 
\newtheorem{lem}[Theorem]{Lemma}
\newtheorem{prop}[Theorem]{Proposition}
\newtheorem{cor}[Theorem]{Corollary}

\theoremstyle{definition} 
\newtheorem{Remark}[Theorem]{Remark}

\newtheorem{Example}[Theorem]{Example}

\newtheorem{Definition}[Theorem]{Definition}

\newtheorem{fact}[Theorem]{Fact} 

\newtheorem{dfn}[Theorem]{Definition}

\newtheorem{quest}[Theorem]{Question}
\newtheorem{rem}[Theorem]{Remark}

\newtheorem{ex}[Theorem]{Example}
\newtheorem{condition}[Theorem]{Condition}

\newtheorem*{convention}{Convention}
\newtheorem*{ac}{Acknowledgments}

\newcommand{\om}{\Omega}
%
%
\let\epsilon\varepsilon
\let\phi=\varphi
\let\kappa=\varkappa

%
%
\textwidth=15cm \textheight=22cm \topmargin=0.5cm
\oddsidemargin=0.5cm \evensidemargin=0.5cm \pagestyle{plain}

\title{On a generalized Auslander-Reiten conjecture}
\author{Souvik Dey}  
\address{Souvik Dey: Department of Mathematics \\ University of Kansas\\405 Snow Hall, 1460 Jayhawk Blvd.\\ Lawrence, KS 66045, U.S.A.}
\email{souvik@ku.edu}  

\author{Shinya Kumashiro}
\address{Shinya Kumashiro: National Institute of Technology, Oyama College, 
	771 Nakakuki, Oyama, Tochigi, 323-0806, Japan}
\email{skumashiro@oyama-ct.ac.jp}

\author{Parangama Sarkar}
\address{Parangama Sarkar:  Department of Mathematics,
		Indian Institute of Technology, Palakkad, India}
\email{parangama@iitpkd.ac.in}

\keywords{Generalized Auslander-Reiten condition, flat dimension, derived category, derived functor, Ulrich ideal, extension degree}
\thanks{2020 {\em Mathematics Subject Classification.} 13D07, 13D09, 13D05, 13H10, 13C13}  
\thanks{The second author was supported by JSPS KAKENHI Grant Number JP21K13766.}
\thanks{The third author was supported by SERB POWER Grant with Grant No. SPG/2021/002423.}

\begin{document}

\begin{abstract}
It is well-known that the generalized Auslander-Reiten condition (GARC) and the symmetric Auslander condition (SAC) are equivalent, and (GARC) implies that the Auslander-Reiten condition (ARC). In this paper we explore (SAC) along with the several canonical change of rings $R \to S$. First, we prove the equivalence of (SAC) for $R$ and $R/xR$, where $x$ is a non-zerodivisor on $R$, and the equivalence of (SAC) and (SACC) for rings with positive depth, where (SACC) is the symmetric Auslander condition for modules with constant rank. The latter assertion affirmatively answers a question posed by Celikbas and Takahashi. 

Secondly, for a ring homomorphism $R \to S$, we prove that if $S$ satisfies (SAC) (resp. (ARC)), then $R$ also satisfies (SAC) (resp. (ARC)) if the flat dimension of $S$ over $R$ is finite. We also prove that (SAC) for $R$ implies that (SAC) for $S$ when $R$ is Gorenstein and $S=R/Q^\ell$, where $Q$ is generated by a regular sequence of $R$ and the length of the sequence is at least $\ell$. 
This is a consequence of more general results about Ulrich ideals proved in this paper.
Applying these results to determinantal rings and numerical semigroup rings, we provide new classes of rings satisfying (SAC). A relation between (SAC) and an invariant related to the finitistic extension degree is also explored.
\end{abstract}  

\maketitle
\section{Introduction}  

The purpose of this paper is to study the following condition for commutative Noetherian rings $R$.

	\begin{condition}\label{b1.1} 
		The equality 
		\[
		\pd_R M= \sup \{ i\in \mathbb{Z} \mid \Ext_R^i (M, M\oplus R)\ne0\}
		\] 
		hold for all finitely generated $R$-modules $M$.
	\end{condition}

It is known that there are several equivalent conditions of the above condition (see Lemma \ref{b2.1} or \cite[Theorem 1.2]{CT}). These equivalent conditions are known as the {\it symmetric Auslander condition} (SAC) and the {\it generalized Auslander-Reiten condition} (GARC) and are studied in the literature (\cite{CT, CHmanu, D, HSV, W1, W2}). Here, ``generalized'' of (GARC) means that (GARC) can be observed as a generalization of the condition appearing in the famous conjecture of Auslander-Reiten (\cite{AR}), which is called the {\it Auslander-Reiten condition} (ARC).

\vspace*{1mm}
(ARC): For each finitely generated $R$-module $M$, if $\Ext_R^{>0} (M, M\oplus R)=0$, then $M$ is projective.

\vspace*{1mm}
The {\it Auslander-Reiten conjecture} (\cite{AR}) claims that (ARC) holds for all Noetherian rings $R$. Although various special cases of this conjecture are known to hold true, it remains open in its full generality. For the progress on the Auslander-Reiten conjecture and related conjectures, see, for example, \cite[Introduction]{GT}. Here, we only note that (SAC) (equivalently, (GARC) and Condition \ref{b1.1}) implies that (ARC) (see \cite[Theorem 4.8]{CT} or this is easily observed by the definitions of Condition \ref{b1.1} and (ARC)). Thus, it is worth to find a class of rings satisfying (SAC) in study of (ARC).
	

Let us explain our results in this paper. We explore (SAC) from the aspect of change of rings. We then see that (SAC) is preserved by several canonical change of rings in spite of the fact that they are not known for (ARC). Roughly speaking, (SAC) may be flexible than (ARC). In conclusion, for example, we answer affirmatively a question on (SAC) posed by Celikbas and Takahashi (\cite[Question 4.9]{CT}) and strengthen the results \cite[Proposition 2.11 and Theorem 2.16]{K} on (ARC) for determinantal rings by replacing (ARC) with (SAC). 
To explain our results in detail, hereafter in this section, let $(R, \fkm)$ be a Noetherian local ring with maximal ideal $\fkm$. 

In Section \ref{section2}, we consider the change of rings $R \to R/xR$, where $x\in \fkm$ is a non-zerodivisor on $R$. In particular, we obtain the following. Recall that the {\it symmetric Auslander condition for modules with constant rank} (SACC) is the condition restricting ``finitely generated modules'' in (SAC) to ``finitely generated modules with constant rank'' (see the end of this section or \cite[Introduction]{CT}). 

\begin{Theorem} {\rm (Theorem \ref{SACCtoSAC})}
	Suppose that $R$ has a positive depth and $x\in \fkm$ is a non-zerodivisor on $R$.  If $R$ satisfies {\rm (SACC)}, then $R/xR$ satisfies {\rm (SAC)}. 
\end{Theorem}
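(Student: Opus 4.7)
My plan is to lift a finitely generated $\bar R := R/xR$-module $N$ to an $R$-module of constant rank and then pull back (SACC). First, I would observe that $M := N \oplus R$ has constant rank $1$ over $R$: since $x$ is $R$-regular, $x \notin \mathfrak p$ for every $\mathfrak p \in \operatorname{Ass} R$, while $xN = 0$ forces $N_\mathfrak p = 0$, so $M_\mathfrak p = R_\mathfrak p$ at each such $\mathfrak p$. Hence (SACC) applied to $M$ yields
\[
\pd_R M \;=\; \sup\{i \geq 0 : \Ext_R^i(M, M \oplus R) \neq 0\}. \tag{$\ast$}
\]

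I would then rewrite both sides of $(\ast)$ in terms of $N$ viewed over $\bar R$. On the left, the standard change-of-rings formula gives $\pd_R M = \pd_R N = \pd_{\bar R} N + 1$ (via Auslander--Buchsbaum applied over $R$ and $\bar R$, noting $\depth_R N = \depth_{\bar R} N$ and $\depth \bar R = \depth R - 1$, or directly by lifting a minimal $\bar R$-resolution of $N$). For the right-hand side, expanding the direct sums and using $\Ext_R^{\geq 1}(R, -) = 0$ shows that, for $i \geq 1$, the nonvanishing of $\Ext_R^i(M, M \oplus R)$ is governed entirely by $\Ext_R^i(N, N \oplus R)$; assuming $N \neq 0$ (the zero case being trivial), the $i = 0$ term is subsumed.

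To convert these $R$-Ext suprema into $\bar R$-Ext suprema I would use two change-of-rings devices. Rees's theorem gives $\Ext_R^{i+1}(N, R) \cong \Ext_{\bar R}^i(N, \bar R)$ for $i \geq 0$, so $\sup_i \Ext_R^i(N, R)$ exceeds $\sup_i \Ext_{\bar R}^i(N, \bar R)$ by exactly one. For the summand $\Ext_R^\bullet(N, N)$ I would invoke the Cartan--Eilenberg change-of-rings spectral sequence $E_2^{p, q} = \Ext_{\bar R}^p(N, \Ext_R^q(\bar R, N)) \Rightarrow \Ext_R^{p+q}(N, N)$. Since $\Ext_R^q(\bar R, N) = N$ for $q \in \{0, 1\}$ and vanishes otherwise, only the two rows $q = 0, 1$ survive, both equal to $\Ext_{\bar R}^p(N, N)$, with lone differential the Eisenbud operator $\chi_x \colon \Ext_{\bar R}^p(N, N) \to \Ext_{\bar R}^{p+2}(N, N)$. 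The ensuing short exact sequence
\[
0 \;\to\; \operatorname{Coker}(\chi_x) \;\to\; \Ext_R^n(N, N) \;\to\; \Ker(\chi_x) \;\to\; 0
\]
(with $\operatorname{Coker}$ landing in degree $n$ and $\Ker$ sitting in degree $n-1$) suffices to deduce that $\sup_i \Ext_R^i(N, N) = \sup_i \Ext_{\bar R}^i(N, N) + 1$ in the finite case, while the infinite case follows by contrapositive.

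Combining the two shift-by-one identifications and cancelling the common $+1$ on both sides of $(\ast)$ produces the desired (SAC) equation over $\bar R$. The main obstacle is the $\Ext(N, N)$ comparison: the spectral sequence does not degenerate on $E_2$ and the Eisenbud operator is generally nontrivial. Fortunately, the two-row shape forces $E_\infty = E_3$, so the supremum of nonvanishing $\Ext_R^n(N, N)$ shifts by exactly one --- this bookkeeping, together with Rees's theorem for $\Ext(N, R)$, is all the argument requires.
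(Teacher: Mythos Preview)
Your approach is correct and genuinely different from the paper's. The paper works with the (SAC) formulation directly: given an $\bar R$-module $N$ with $\Ext_{\bar R}^{>0}(N,\bar R)=0$ and $\Ext_{\bar R}^{\gg 0}(N,N)=0$, it lifts these vanishings to $R$ via the long exact sequence (your two-row spectral sequence is exactly Proposition~\ref{long}), passes to the first $R$-syzygy $\Omega_R^1 N$ (which has constant rank since $N$ is $R$-torsion), applies (SACC) to obtain $\Ext_R^{>0}(\Omega_R^1 N,\Omega_R^1 N)=0$, and then invokes the external implication ``(SACC) $+$ positive depth $\Rightarrow$ (ARC)'' from \cite[Corollary~4.6]{CT} to conclude $\Omega_R^1 N$ is free. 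You instead use the Condition~\ref{b1.1} reformulation of (SACC) applied to $N\oplus R$, which avoids the Celikbas--Takahashi (ARC) step entirely but trades it for the third change-of-rings identity $\pd_R N=\pd_{\bar R}N+1$. Either external input is standard; yours is arguably the more elementary.

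One imprecision worth fixing: the phrase ``the infinite case follows by contrapositive'' does not give what you want. Your finite-case argument shows $\sup_{\bar R}<\infty\Rightarrow\sup_R=\sup_{\bar R}+1$; the contrapositive is $\sup_R=\infty\Rightarrow\sup_{\bar R}=\infty$, the wrong direction for an unconditional equality. (Indeed $\Ext_R^{\gg 0}(N,N)=0$ alone only forces $\Ext_{\bar R}^m(N,N)\cong\Ext_{\bar R}^{m+2}(N,N)$ for $m\gg 0$, not eventual vanishing.) This is harmless for the final result, since the only nontrivial case of $(\ast\ast)$ is when both $\bar R$-suprema are finite, and then your finite-case shifts plus $(\ast)$ give $\pd_R N<\infty$; but note that your stated justifications (Auslander--Buchsbaum, lifting a minimal $\bar R$-resolution) only handle the direction $\pd_{\bar R}N<\infty\Rightarrow\pd_R N<\infty$. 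The step you actually need here, $\pd_R N<\infty\Rightarrow\pd_{\bar R}N<\infty$, is precisely the third change-of-rings theorem and should be cited as such.
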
	

As an immediate consequence of the above, we see that (SAC) for $R$ and $R/xR$ are equivalent (Corollary \ref{a0.3}). We should emphasize that this is not known for (ARC) without assuming that $R$ is complete (see, for example, \cite[Proposition 2.2]{K}).
In addition, we give a complete answer of a question posed by Celikbas-Takahashi \cite[Question 4.9]{CT} that asserts the equivalence of (SAC) and (SACC) for rings $R$ with positive depth (Corollary \ref{b2.4}).
We also investigate (SACC) for more details by showing that, without loss of equivalence, we can further restrict ``finitely generated modules with constant rank'' to ``finitely generated modules with a fixed constant rank $n\ge 0$'' (Corollary \ref{c29}). 

The aim of Section \ref{section3} is to prove the following.

\begin{thm}{\rm (Theorem \ref{mainsec3})} \label{1.3}
Let $R \to S$ be a homomorphism of local rings such that $\fd_R S<\infty$. Then the following hold. 
\begin{enumerate}[\rm(1)]

\item If $S$ satisfies {\rm (SAC)}, then $R$ satisfies {\rm(SAC)}.

\item If $S$ satisfies {\rm (ARC)}, then $R$ satisfies {\rm (ARC)}.   
\end{enumerate}
\end{thm}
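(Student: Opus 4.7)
The plan is to pass through the derived tensor product $X := M \lotimes_R S \in \D^b(\mod S)$, lift the Ext-vanishing hypothesis from $R$ to $S$, apply the assumed (SAC) or (ARC) to $X$, and finally descend perfectness back to finite projective dimension over $R$.

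First I would show that the Ext-vanishing on $M$ over $R$ propagates to $X$ over $S$. Since $M$ is finitely presented over the Noetherian ring $R$, $\Ext^i_R(M,-)$ commutes with filtered colimits; thus for any flat $R$-module $F = \varinjlim R^{n_i}$ we have $\Ext^i_R(M,F) \cong \varinjlim \Ext^i_R(M,R)^{n_i}$ and $\Ext^i_R(M, M \otimes_R F) \cong \varinjlim \Ext^i_R(M,M)^{n_i}$. Picking a bounded flat resolution $F_\bullet \to S$ of length $d = \fd_R S$ and writing $\rhom_R(M,S) \simeq \rhom_R(M,F_\bullet)$ and $\rhom_R(M,X) \simeq \rhom_R(M, M \otimes_R F_\bullet)$, a spectral sequence coming from the brutal truncation of $F_\bullet$ (or, in the (ARC) case, the direct observation that each $F_q$ is $\Hom_R(M,-)$-acyclic, so $\rhom_R(M, F_\bullet)$ is represented by $\Hom_R(M, F_\bullet)$ concentrated in cohomological degrees $[-d, 0]$) yields $\Ext^i_R(M,S) = \Ext^i_R(M,X) = 0$ for $i \gg 0$ under (SAC) and for all $i > 0$ under (ARC). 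By the standard adjunction $\rhom_R(M,-) \simeq \rhom_S(X,-)$ on $\D(S)$, this translates to $\Ext^i_S(X, X \oplus S) = 0$ in the corresponding range.

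Next I would apply the hypothesis on $S$ to conclude that $X$ is a perfect complex over $S$. Because (SAC)/(ARC) are stated for modules whereas $X$ is a bounded complex, I would reduce to the module case by taking the $n$-th syzygy $N := \syz^n_S X$ from a projective resolution of $X$ by finitely generated free $S$-modules, for $n$ larger than the amplitude of $X$: then $N$ is a finitely generated $S$-module, the Ext-vanishing for $X$ transfers to Ext-vanishing for $N$ after a degree shift, and $X$ is perfect over $S$ if and only if $\pd_S N < \infty$. Applying the assumed (SAC) (resp.\ (ARC)) to $N$ then gives perfectness of $X$. To descend from $S$ back to $R$, use that the local map $R \to S$ induces a flat extension of residue fields $k_R \hookrightarrow k_S$: combining $\pd_R M = \sup\{i : \Tor^R_i(M,k_R) \neq 0\}$ with the flat base change $\Tor^R_i(M,k_S) \cong \Tor^R_i(M,k_R) \otimes_{k_R} k_S \cong \Tor^S_i(X,k_S)$, perfectness of $X$ forces $\pd_R M < \infty$. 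For part (1), the resulting equality $\pd_R M = \sup\{i : \Ext^i_R(M, M \oplus R) \neq 0\}$ is immediate, since finite projective dimension over a local ring forces $\Ext^{\pd_R M}_R(M,R) \neq 0$. For part (2), $\pd_R M < \infty$ combined with the hypothesis $\Ext^{>0}_R(M,R) = 0$ forces $\pd_R M = 0$, so $M$ is projective.

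The main technical obstacle will be the syzygy reduction in step two: one must verify that Ext-vanishing for the complex $X$ cleanly descends to the module $N$, especially for the $\Ext^{\gg 0}_S(N,N)$ terms, which requires either a derived-category formulation of (SAC)/(ARC) as in \cite{CT} or an explicit dimension-shift argument invoking stability of Ext-vanishing under extensions by perfect complexes. Everything else is either standard homological algebra or the adjunction $\rhom_R(M,-) \simeq \rhom_S(M \lotimes_R S,-)$.
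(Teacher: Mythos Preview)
Your approach is correct and closely parallels the paper's, but there is a meaningful tactical difference worth noting. The paper takes a high syzygy of $M$ over $R$ \emph{before} base-changing to $S$: choosing $n = \rfd_R M$ guarantees $\Tor^R_{>0}(\syz^n_R M, S) = 0$, so $(\syz^n_R M) \otimes_R S$ is already a genuine finitely generated $S$-module rather than a complex. Ext-vanishing is then transferred from $\syz^n_R M$ to this module via the tensor-evaluation isomorphism $\rhom_R(X,M)\lotimes_R S \simeq \rhom_R(X, M\lotimes_R S)$ together with Hom--tensor adjunction, and (SAC)/(ARC) applies directly. You invert the order: tensor first to obtain the complex $X = M \lotimes_R S$, then take a high $S$-syzygy $N$ to reduce to a module. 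The two routes in fact land on the same object up to free summands, since tensoring a minimal $R$-free resolution of $M$ with $S$ yields an $S$-free resolution of the complex $X$, and its $n$-th syzygy is precisely $(\syz^n_R M)\otimes_R S$. The advantage of the paper's ordering is that it entirely sidesteps your ``main technical obstacle'' of descending Ext-vanishing from the complex $X$ to the module $N$: once the syzygy is taken over $R$, everything remains in the category of modules and only ordinary dimension-shifting is needed. Your derived-category formulation is more conceptual, and the obstacle is surmountable via the triangle $N[n]\to X\to P'$ with $P'$ perfect, but the paper's route is shorter because it never creates that obstacle in the first place.
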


We note that Theorem \ref{1.3} generalizes \cite[Theorem 4.5 (1)]{CT} and \cite[Theorem 2.6 (3) $\Rightarrow$ (1)]{K} since $\fd_R R/Q^\ell$ is finite, where $Q$ is an ideal of $R$ generated by a regular sequence of $R$ and $\ell>0$.
It might be worth to note that, using the hereditary properties of (SAC) observed in Sections \ref{section2} and \ref{section3}, the assertion that (SAC) holds for all Noetherian (resp. Cohen-Macaulay or Gorenstein) local rings containing a field can be reduced to the case of either factorial domains or rings having isolated singularity (Remark \ref{rem211}). 

In Section \ref{section4}, we consider the change of rings $R \to R/I^\ell$, where $I$ is an Ulrich ideal of $R$ and $\ell>0$. Here, the notion of Ulrich ideals is introduced by Goto, Ozeki, Takahashi, Watanabe, and Yoshida (\cite{GOTWY}), and it is well-explored, for example, see \cite{GTT2, K} and the references therein. The definition of Ulrich ideals is as follows.

\begin{Definition}(\cite[Definition 2.1]{GOTWY})\label{Ulrich}
Let $(R, \fkm)$ be a Cohen-Macaulay local ring and $I$ an $\fkm$-primary ideal of $R$. We say that $I$ is an {\it Ulrich ideal} of $R$ if $I^2=Q I$ for some parameter ideal $Q \subseteq I$ and $I/I^2$ is a free $R/I$-module. 
\end{Definition}

We note that $\fd_R R/I=\infty$ if an Ulrich ideal $I$ is not a parameter ideal (\cite[Theorem 1.4]{GOTWY}); hence, this is not the case of Theorem \ref{1.3}.
Using the notion of Ulrich ideals, we observe hereditary properties among $R$, $R/I$, and $R/I^\ell$  (Theorem \ref{ul}). By applying this result, we obtain an equivalence of (SAC) between $R$ and $R/Q^\ell$, where $Q$ is an ideal generated by a regular sequence on $R$ of length $\ge \ell$ (Theorem \ref{a0.5}). Moreover, we strengthen the results \cite[Proposition 2.11 and Theorem 2.16]{K} on (ARC) for determinantal rings by replacing (ARC) with (SAC) (Corollaries \ref{c411} and \ref{det52}). 

In Section \ref{sectionglue}, we provides a new class of rings satisfying (SAC) arising from the gluing of numerical semigroup rings. 

In Section \ref{section6}, we concerns a new invariant which raised from the {\it finistic extension degree} defined by Diveris (\cite[Definition 2.2]{D}). 
Diveris proves that for a Gorenstein ring $R$, the finistic extension degree is finite if and only if $R$ satisfies (SAC) (\cite[Corollary 3.2]{D}). We here show that our modified invariant also behaves well as finistic extension degree in Noetherian rings (Theorem \ref{fedth}).

\begin{convention}
	Throughout this paper, all rings are commutative Noetherian rings with identity. We denote by $(R, \fkm)$ a local ring with  maximal ideal $\fkm$. In Sections \ref{section2},  \ref{section4}, and \ref{sectionglue}, all modules are finitely generated.  
	For each finitely generated module $M$ over a local ring $R$, $\mu_R (M)$ (resp. $\ell_R (M)$, $\om^n_R M$) denotes the number of elements in a minimal system of generators of $M$ (resp. the length of $M$, the $n$th syzygy of $M$ in a minimal $R$-free resolution of $M$). 
	If $M$ is a Cohen-Macaulay $R$-module, $\rmr_R (M)$ denotes the Cohen-Macaulay type of $M$. Let $\rmv(R)$ denote the embedding dimension of $R$. 
	
	For convenience, $\Ext_R^{>n} (M, N)=0$ (resp. $\Ext_R^{\gg0} (M, N)=0$) stands for the vanishing of $\Ext_R^{i} (M, N)$ for all $i>n$ (resp. the vanishing of $\Ext_R^{i} (M, N)$ for all large enough $i>0$).

For the convenience of readers, we summarize here the conditions that we consider throughout this paper.
\begin{enumerate}
\item[{\bf (ARC):}] For each finitely generated $R$-module $M$, if $\Ext_R^{>0} (M, M\oplus R)=0$, then $M$ is projective.

\item[{\bf (GARC):}] For each finitely generated $R$-module $M$, if $\Ext_R^{\gg0} (M, M\oplus R)=0$, then $\pd_R M<\infty$.

\item[{\bf (SAC):}] For each finitely generated $R$-module $M$, if \[
\Ext_R^{>0} (M, R)=\Ext_R^{\gg0} (M, M)=0,
\] 
then $\Ext_R^{>0} (M, M)=0$.

\item[{\bf (SACC):}] For each finitely generated $R$-module $M$ with constant rank, if 
\[
\Ext_R^{>0} (M, R)=\Ext_R^{\gg0} (M, M)=0,
\] 
then $\Ext_R^{>0} (M, M)=0$.
\end{enumerate}
\end{convention}

\begin{rem} \label{rem1}   
	For a ring $R$, the conditions (SAC) and (GARC) are equivalent even if $R$ is not local. Indeed, the only place the local assumption is used in the proof of \cite[Theorem 4.8]{CT} is to invoke \cite[Lemma 1(iii), page 154]{Mat} which says that for a finitely generated $R$-module $M$, if $\pd_R M=n<\infty$ then $\Ext^n_R(M,R)\ne 0$. This however goes through in the non-local case as well since if $\pd_R M=n<\infty$ then $\pd_{R_{\m}} M_{\m}=n$ for some maximal ideal $\m$, by \cite[8.52]{W}, and hence $\Ext^n_R(M,R)_{\m}\cong \Ext^n_{R_{\m}}(M_{\m},R_{\m})\ne 0$, so $\Ext^n_R(M,R)\ne 0$. We use this observation without referencing.

\end{rem}

\section{(SAC) and other related conditions} \label{section2}
 



 
Let $R$ be a local ring. 

\begin{Lemma}\label{b2.1}
The following conditions are equivalent:
\begin{enumerate}[{\rm (1)}] 
\item $\pd_R M= \sup \{ i\in \mathbb{Z} \mid \Ext_R^i (M, M\oplus R)\ne0\}$ holds for each finitely generated $R$-module M.
\item {\bf (GARC):} For each finitely generated $R$-module $M$, if $\Ext_R^{\gg0} (M, M\oplus R)=0$ then $\pd_R M<\infty$.
\item {\bf (SAC):} For each finitely generated $R$-module $M$, if $\Ext_R^{>0} (M, R)=\Ext_R^{\gg0} (M, M)=0$ then $\Ext_R^{>0} (M, M)=0$.
\end{enumerate}
\end{Lemma}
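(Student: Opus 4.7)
The plan is to establish the triangle $(1) \Leftrightarrow (2) \Rightarrow (3) \Rightarrow (2)$, following the template of \cite[Theorem 1.2]{CT}. The basic ingredient throughout is the standard fact that if $\pd_R M = n < \infty$ then $\Ext_R^n(M, R) \ne 0$ while $\Ext_R^{>n}(M, -) = 0$ (cf.\ the lemma of Matsumura cited later in Remark \ref{rem1}).

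For $(1) \Leftrightarrow (2)$: assuming (1), the vanishing of $\Ext_R^i(M, M \oplus R)$ for $i \gg 0$ makes the supremum in (1) finite, forcing $\pd_R M < \infty$. Conversely, under (2), when $\pd_R M = n < \infty$ the Matsumura fact pins the supremum to $n$ exactly, while when $\pd_R M = \infty$ condition (2) forces the supremum to also be $\infty$. For $(2) \Rightarrow (3)$: the hypotheses of (3) give $\Ext_R^{\gg 0}(M, M \oplus R) = 0$, so (2) yields $\pd_R M < \infty$; combining with $\Ext_R^{>0}(M, R) = 0$ and the Matsumura fact forces $\pd_R M = 0$, so $M$ is free and $\Ext_R^{>0}(M, M) = 0$.

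For the main direction $(3) \Rightarrow (2)$: starting from $\Ext_R^{\gg 0}(M, M \oplus R) = 0$, I would pass to the high syzygy $N = \Omega_R^n M$ with $n \gg 0$ in order to reduce to the SAC hypothesis. The dimension-shift isomorphism $\Ext_R^i(\Omega^n M, X) \cong \Ext_R^{i+n}(M, X)$ for $i \geq 1$ gives $\Ext_R^{>0}(N, R) = 0$ immediately. For the self-Ext of $N$, I would apply $\Hom_R(M, -)$ iteratively to the syzygy sequences $0 \to \Omega^{k+1} M \to F_k \to \Omega^k M \to 0$ and use $\Ext_R^{\gg 0}(M, R) = 0$ to obtain $\Ext_R^{i+n}(M, \Omega^n M) \cong \Ext_R^i(M, M)$ for $i \gg 0$, hence $\Ext_R^{\gg 0}(N, N) = 0$. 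Invoking (SAC) on $N$ then delivers $\Ext_R^{>0}(N, N) = 0$.

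The main obstacle is the remaining step: from $\Ext_R^{>0}(N, N \oplus R) = 0$ for a sufficiently high syzygy $N$, concluding $\pd_R N < \infty$ (equivalently $\pd_R M < \infty$). A single application of (SAC) is not enough, since (SAC) only upgrades eventual self-Ext-vanishing to full self-Ext-vanishing, not to finite projective dimension. I anticipate bootstrapping by reapplying (SAC) to auxiliary modules built from $N$ (for instance $N \oplus \Omega N$, or the successive syzygies $\Omega^j N$) to extract enough additional Ext-rigidity to force the minimal free resolution of $N$ to terminate; for this final step I would directly follow \cite[Theorem 1.2]{CT}.
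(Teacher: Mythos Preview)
Your argument for $(1)\Leftrightarrow(2)$ is identical to the paper's, and both you and the paper ultimately defer the equivalence $(2)\Leftrightarrow(3)$ to \cite{CT} (the paper cites it as \cite[Theorem 4.8]{CT} and gives no further detail), so the approaches coincide.

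One remark: your hesitation at the final step of $(3)\Rightarrow(2)$ is unwarranted, because your own suggestion already finishes it. Once you know $\Ext_R^{>0}(N,N\oplus R)=0$ for $N=\Omega_R^n M$, the module $L:=N\oplus \Omega_R N$ satisfies $\Ext_R^{>0}(L,R)=0$ and $\Ext_R^{\gg0}(L,L)=0$ (all four cross-Exts vanish for large degree by the dimension shifts you already wrote down). Applying (SAC) to $L$ gives $\Ext_R^{>0}(L,L)=0$, in particular $\Ext_R^1(N,\Omega_R N)=0$, so the syzygy sequence $0\to\Omega_R N\to F\to N\to 0$ splits and $N$ is projective. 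This is precisely the trick in \cite{CT}, so no further bootstrapping is needed.
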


\begin{proof}
(1) $\Rightarrow$ (2): This is clear.

(2) $\Rightarrow$ (1): Let $M$ be a finitely generated $R$-module. If the projective dimension of $M$ is infinite, then so is $\sup \{ i\in \mathbb{Z} \mid \Ext_R^i (M, M\oplus R)\ne0\}$ by hypothesis. Hence the equality in the assertion (1) holds. Assume that the projective dimension of $M$ is finite and set $n=\pd_R M$. Thus $\Ext_R^i (M, M\oplus R)=0$ for all $i>n$. Since by \cite[Lemma 1 (iii), page 154]{Mat} and the discussion in the end of the previous section, $\Ext_R^n (M, R)\ne0$, we get $\sup \{ i\in \mathbb{Z} \mid \Ext_R^i (M, M\oplus R)\ne0\}=n$.

\if0
{\color{red}Then, since $\Ext_R^i (M, M\oplus R)=0$ for all $i>n$, it is enough to show that $\Ext_R^n (M, R)\ne0$. We may assume that $n>0$. Let 
\[
0\to F_n\xrightarrow{\partial} F_{n-1} \to \dots \to F_0 \to M \to 0
\] 
be a minimal free resolution of $M$. Then $\Ext_R^n (M, R)=F_n^*/\Im \partial^*$, where $(-)^*$ denotes the $R$-dual and $\partial^*: F_{n-1}^* \to F_n^*$ denotes the $R$-dual of the map $\partial: F_n \to F_{n-1}$. Hence $\Ext_R^n (M, R)\ne 0$ since all entries of a matrix representing $\partial^*$ are in $\fkm$.} 
\fi  

(2) $\Leftrightarrow$ (3): This follows from \cite[Theorem 4.8]{CT}.
\end{proof}


\begin{Remark}\label{zzz2.2}
Along the same lines of the proof of Lemma \ref{b2.1}, one can also obtain the equivalent conditions of (SACC) by substituting ``finitely generated $R$-module" for ``finitely generated $R$-module that has  constant rank" in Lemma \ref{b2.1}.
\end{Remark}

\begin{prop}{\rm (\cite[Proposition 2.3]{cd}, \cite[11.65]{W})}\label{long}
Let $(R, \fkm)$ be a local ring and $x\in \fkm$ a non-zerodivisor on $R$. Set $\overline R=R/(x)$ and let $M,$ $N$ be $\overline R$-modules. Then there is the following long exact sequence.
\begin{align*}
0\to \Ext^{1}_{\overline R}(M,N)\to &\Ext^{1}_R(M,N)\to \Ext^{0}_{\overline R}(M,N)\\
\vdots& \\
\to \Ext^{n}_{\overline R}(M,N)\to &\Ext^{n}_R(M,N)\to \Ext^{n-1}_{\overline R}(M,N)\\
\to \Ext^{n+1}_{\overline R}(M,N)\to &\Ext^{n+1}_R(M,N)\to \Ext^{n}_{\overline R}(M,N)\to\cdots.
\end{align*}
	
\end{prop}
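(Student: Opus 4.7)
The plan is to derive this long exact sequence as a degeneration of the change of rings spectral sequence for $\Ext$. Specifically, I would invoke the Grothendieck-type spectral sequence
\[
E_2^{p,q} = \Ext^p_{\overline{R}}\bigl(M, \Ext^q_R(\overline{R}, N)\bigr) \Longrightarrow \Ext^{p+q}_R(M, N),
\]
valid since $M$ is an $\overline{R}$-module and $N$ is an $R$-module (a standard consequence of the adjunction $\Hom_R(M, -) = \Hom_{\overline R}(M, \Hom_R(\overline R, -))$ together with a Cartan-Eilenberg resolution).

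Next I would compute the coefficient modules $\Ext^q_R(\overline{R}, N)$ using the free $R$-resolution $0 \to R \xrightarrow{\,x\,} R \to \overline{R} \to 0$. Applying $\Hom_R(-, N)$ gives the two-term complex $N \xrightarrow{\,x\,} N$, whose cohomology is the $x$-torsion in degree $0$ and $N/xN$ in degree $1$. Since $N$ is an $\overline{R}$-module, multiplication by $x$ is zero on $N$, so
\[
\Ext^0_R(\overline{R}, N) = N, \qquad \Ext^1_R(\overline{R}, N) = N, \qquad \Ext^{\geq 2}_R(\overline{R}, N) = 0.
\]
Hence the spectral sequence has only two nonzero rows $q=0$ and $q=1$, both equal to $\Ext^p_{\overline{R}}(M, N)$.

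With only two rows, the sole potentially nonzero differential is $d_2 : E_2^{p,1} \to E_2^{p+2, 0}$; all $d_r$ with $r \geq 3$ land in zero rows, so $E_3 = E_\infty$. The abutment filtration on $\Ext^n_R(M, N)$ then consists of two steps, yielding a short exact sequence
\[
0 \to E_\infty^{n,0} \to \Ext^n_R(M, N) \to E_\infty^{n-1,1} \to 0,
\]
where $E_\infty^{n,0} = \operatorname{coker}\bigl(d_2 : E_2^{n-2,1} \to E_2^{n,0}\bigr)$ and $E_\infty^{n-1,1} = \ker\bigl(d_2 : E_2^{n-1,1} \to E_2^{n+1,0}\bigr)$. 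Splicing these short exact sequences through the $d_2$ maps produces exactly the stated long exact sequence.

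The only delicate point is verifying that this construction is well-defined at $n = 0$ (so that the sequence really starts with $0 \to \Ext^1_{\overline R}(M,N) \to \Ext^1_R(M,N) \to \Ext^0_{\overline R}(M,N)$) and that the appearance of $\Ext^{n+1}_{\overline R}(M,N)$ in the even slots is consistent with the differential $d_2$ going up by two columns; both are immediate from the filtration indices. As an alternative to the spectral sequence argument, one can build a Cartan-Eilenberg double complex by taking a free $\overline R$-resolution $F_\bullet \to M$ and resolving each $F_i = \overline R^{n_i}$ over $R$ by the 2-term complex $R^{n_i} \xrightarrow{x} R^{n_i}$; the total complex is then an $R$-projective resolution of $M$, and applying $\Hom_R(-, N)$ gives a double complex whose row filtration produces the same long exact sequence directly. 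I expect no substantive obstacle since the result is classical; the main care is in tracking indices.
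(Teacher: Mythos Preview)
The paper does not supply its own proof of this proposition; it is quoted as a known result with references to \cite[Proposition 2.3]{cd} and \cite[11.65]{W}. Your spectral-sequence argument is correct and is essentially the classical derivation of this change-of-rings long exact sequence (and is the approach taken in the cited sources), so there is nothing to compare against in the paper itself.
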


\begin{Theorem} \label{SACCtoSAC}
	Let $(R,\mathfrak m)$ be a local ring with positive depth and $x\in \fkm$ be a non-zerodivisor on $R$.  If $R$ satisfies {\rm (SACC)}, then $R/xR$ satisfies {\rm (SAC)}. 
\end{Theorem}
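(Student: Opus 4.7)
The plan is to descend the given $\overline R$-module (where $\overline R := R/xR$) to an $R$-module of constant rank, apply (SACC) over $R$, and then push the vanishing of $\Ext$ back up to $\overline R$. Let $\overline M$ be a finitely generated $\overline R$-module with $\Ext^{>0}_{\overline R}(\overline M, \overline R) = 0$ and $\Ext^{\gg 0}_{\overline R}(\overline M, \overline M) = 0$; the goal is $\Ext^{>0}_{\overline R}(\overline M, \overline M) = 0$. Viewing $\overline M$ as an $R$-module, since $x$ is $R$-regular and annihilates $\overline M$, the module $\overline M$ vanishes at every associated prime of $R$, so it has rank zero over $R$. Pick an $R$-surjection $R^n \twoheadrightarrow \overline M$ and set $K := \syz^1_R \overline M$; then $K$ has constant rank $n$ over $R$.

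Next I would check the hypotheses of (SACC) for $K$. Applying $\Hom_R(\overline M, -)$ to $0 \to R \xrightarrow{x} R \to \overline R \to 0$ and using that $x$ acts as zero on each $\Ext^i_R(\overline M, -)$ produces short exact sequences
\[
0 \to \Ext^i_R(\overline M, R) \to \Ext^i_R(\overline M, \overline R) \to \Ext^{i+1}_R(\overline M, R) \to 0
\]
for every $i \geq 0$. Proposition \ref{long} with $N = \overline R$, combined with $\Ext^{>0}_{\overline R}(\overline M, \overline R) = 0$, forces $\Ext^i_R(\overline M, \overline R) = 0$ for $i \geq 2$, hence $\Ext^i_R(\overline M, R) = 0$ for $i \geq 2$; a standard dimension shift through $0 \to K \to R^n \to \overline M \to 0$ then delivers $\Ext^{>0}_R(K, R) = 0$. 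Analogously, Proposition \ref{long} with $N = \overline M$ converts the eventual vanishing of $\Ext_{\overline R}(\overline M, \overline M)$ into eventual vanishing of $\Ext_R(\overline M, \overline M)$, and two further dimension shifts through the same syzygy sequence yield $\Ext^i_R(K, K) \cong \Ext^i_R(\overline M, \overline M)$ for $i \geq 2$, whence $\Ext^{\gg 0}_R(K, K) = 0$. Since $K$ has constant rank, (SACC) for $R$ now yields $\Ext^{>0}_R(K, K) = 0$, and therefore $\Ext^i_R(\overline M, \overline M) = 0$ for every $i \geq 2$.

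To finish I would go back to Proposition \ref{long} with $M = N = \overline M$: for each $n \geq 2$ the vanishing $\Ext^n_R(\overline M, \overline M) = 0$ forces the connecting homomorphism $\Ext^{n-1}_{\overline R}(\overline M, \overline M) \to \Ext^{n+1}_{\overline R}(\overline M, \overline M)$ to be injective, so that $\Ext^k_{\overline R}(\overline M, \overline M) \hookrightarrow \Ext^{k+2}_{\overline R}(\overline M, \overline M)$ for every $k \geq 1$. Iterating these injections against the eventual vanishing of $\Ext^{\gg 0}_{\overline R}(\overline M, \overline M)$ kills each $\Ext^k_{\overline R}(\overline M, \overline M)$ with $k \geq 1$, which is exactly (SAC) for $\overline R$. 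The only real obstacle is that (SACC) requires constant rank while $\overline M$ itself has rank zero and a potentially nonzero $\Ext^1_R(\overline M, R)$; passing to the first $R$-syzygy $K$ is what simultaneously fixes the rank and clears the degree-one $\Ext$, making (SACC) directly applicable.
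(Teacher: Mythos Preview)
Your proof is correct and follows the same architecture as the paper's: pass to the first $R$-syzygy $K=\syz^1_R\overline M$, observe it has constant rank, verify the hypotheses of (SACC) for $K$ via Proposition~\ref{long} and dimension-shifting, and then transport the resulting Ext-vanishing back to $\overline R$ through Proposition~\ref{long} and the eventual vanishing of $\Ext^{\gg 0}_{\overline R}(\overline M,\overline M)$. The one substantive difference is in the middle step: after (SACC) yields $\Ext^{>0}_R(K,K)=0$, the paper invokes \cite[Corollary~4.6]{CT} (positive depth $\Rightarrow$ (ARC)) to deduce that $K$ is free and hence $\pd_R\overline M\le 1$, whereas you bypass (ARC) entirely by establishing the isomorphism $\Ext^i_R(K,K)\cong\Ext^i_R(\overline M,\overline M)$ for $i\ge 2$ directly via two dimension shifts through the syzygy sequence. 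Your route is a minor streamlining---it avoids the external reference and does not need $K$ to be free---but the overall strategy is identical.
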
		
	\begin{proof}
Let $\overline R=R/(x)$ and we use the notation $\overline{(-)}$ for $(-)\otimes_R R/(x)$.
Let $M$ be a finitely generated $\overline R$-module such that $\Ext^{>0}_{\overline R}(M,\overline R)=0$ and $\Ext^{\gg 0}_{\overline R}(M,M)=0$. We show that $\Ext^{>0}_{\overline R}(M,M)=0$.     

By looking at the long exact sequence obtained from Proposition \ref{long} with $N=M$, $\Ext^{\gg 0}_{\overline R}(M,M)=0$ implies that $\Ext^{\gg 0}_{R}(M,M)=0$. Hence, $\Ext^{\gg 0}_R(\syz_R^1 M,M)=0$. 
Similarly, applying Proposition \ref{long} with $N=\overline{R}$, $\Ext^{>0}_{\overline R}(M,\overline R)=0$ shows that $\Ext^{>1}_R(M,R)=0$.
Applying $\Hom_R(\syz_R^1 M, -)$ to a part of minimal free resolution 
\[
0\to \syz_R^1 M\to R^{\oplus \mu_R(M)}\to M\to 0
\] 
of $M$, we get $\Ext^{\gg 0}_R(\syz_R^1 M,\syz_R^1 M)=0$ since $\Ext^{>0}_R(\syz_R^1 M,R)=0$ and $\Ext^{\gg 0}_R(\syz_R^1 M,M)=0$. Since $R$ satisfies (SACC) and $\syz_R^1 M$ has constant positive rank over $R$ (as $M$ has constant rank $0$ over $R$), we get $\Ext^{>0}_R(\syz_R^1 M, \syz_R^1 M)=0$. 

On the other hand, since $R$ is of positive depth, by \cite[Corollary 4.6]{CT},  $R$ satisfies (ARC). Hence, $\Ext^{>0}_R(\syz_R^1 M,R)=\Ext^{>0}_R(\syz_R^1 M, \syz_R^1 M)=0$ implies that $\syz_R^1 M$ is a free $R$-module. It follows that $\pd_R M\le 1$.  Therefore $\Ext^{>1}_R(M,M)=0$. Again using Proposition \ref{long}, we get $\Ext^{n-1}_{\overline R}(M,M)\cong \Ext^{n+1}_{\overline R}(M,M)$ for all $n\geq 2$. Since $\Ext^{\gg 0}_{\overline R}(M,M)=0$, we get $\Ext^{>0}_{\overline R}(M,M)=0$. It follows that $R/xR$ satisfies {\rm (SAC)}. 
\end{proof}

As a consequence of the above theorem we prove the following.

\begin{Corollary}\label{a0.3}
Let $(R,\mathfrak m)$ be a  local ring with positive depth and $x\in \fkm$ be a non-zerodivisor on $R$. Then the following are equivalent:
\begin{enumerate}[{\rm (1)}] 
\item $R$ satisfies {\rm (SAC)}.
\item $R/xR$ satisfies {\rm (SAC)}.
\end{enumerate}
\end{Corollary}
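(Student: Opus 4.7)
The direction (1) $\Rightarrow$ (2) will be immediate: (SAC) trivially implies (SACC), since the class of finitely generated $R$-modules with constant rank is a subclass of all finitely generated $R$-modules, so Theorem \ref{SACCtoSAC} directly delivers (SAC) for $R/xR$.

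The nontrivial direction is (2) $\Rightarrow$ (1). Fix a finitely generated $R$-module $M$ with $\Ext_R^{>0}(M,R)=0$ and $\Ext_R^{\gg 0}(M,M)=0$; the goal is $\Ext_R^{>0}(M,M)=0$. My first move would be to replace $M$ by its first syzygy $N := \syz_R^1 M$, which sits inside a free module and hence admits $x$ as a non-zerodivisor. The short exact sequence $0 \to N \to F \to M \to 0$, together with the vanishing $\Ext_R^{>0}(M,R)=0$, lets one dimension-shift via $\Hom_R(M,-)$ and $\Hom_R(-,N)$ to obtain $\Ext_R^i(N,N)\cong\Ext_R^i(M,M)$ for every $i \ge 1$ as well as $\Ext_R^{>0}(N,R)=0$. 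Hence it suffices to prove $\Ext_R^{>0}(N,N)=0$.

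Next, I transfer the problem to $\overline R := R/xR$. Since $x$ is both $R$- and $N$-regular, a free resolution of $N$ reduces modulo $x$ to a free resolution of $\overline N := N/xN$ over $\overline R$, yielding the change-of-rings isomorphism $\Ext_R^i(N,L) \cong \Ext_{\overline R}^i(\overline N, L)$ for any $\overline R$-module $L$. Applying $\Hom_R(N,-)$ to $0 \to R \xrightarrow{x} R \to \overline R \to 0$ and to $0 \to N \xrightarrow{x} N \to \overline N \to 0$, the vanishing of $\Ext_R^{>0}(N,R)$ and of $\Ext_R^{\gg 0}(N,N)$ convert respectively into $\Ext_{\overline R}^{>0}(\overline N, \overline R)=0$ and $\Ext_{\overline R}^{\gg 0}(\overline N, \overline N)=0$. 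Thus $\overline N$ satisfies the hypothesis of (SAC) over $\overline R$, and assumption (2) yields $\Ext_{\overline R}^{>0}(\overline N, \overline N) = \Ext_R^{>0}(N,\overline N) = 0$.

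To finish, I would lift the vanishing back across $0\to N \xrightarrow{x} N \to \overline N \to 0$: applying $\Hom_R(N,-)$ and using $\Ext_R^{>0}(N,\overline N)=0$, one reads off that multiplication by $x$ is surjective on each $\Ext_R^i(N,N)$ for $i\ge 1$. Since each such module is finitely generated over $R$ and $x \in \fkm$, Nakayama's lemma forces $\Ext_R^{>0}(N,N)=0$, and therefore $\Ext_R^{>0}(M,M)=0$ as required. I expect the main obstacle to be the careful bookkeeping of the several Ext long exact sequences together with the tracking of vanishing ranges; everything hinges on the key observation that passing to the first syzygy automatically promotes $x$ to a non-zerodivisor on the module, which is precisely what licenses both the change-of-rings isomorphism and the terminal Nakayama step.
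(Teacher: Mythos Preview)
Your argument is correct. For $(1)\Rightarrow(2)$ you do exactly what the paper does (invoke Theorem~\ref{SACCtoSAC} after noting that (SAC) implies (SACC)). For $(2)\Rightarrow(1)$, however, the paper simply cites \cite[Theorem 4.5(1)]{CT} (equivalently, the special case of Theorem~\ref{mainsec3} with $S=R/xR$, which has $\fd_R S\le 1$), whereas you give a self-contained elementary proof: pass to the first syzygy $N=\syz_R^1 M$ to make $x$ regular on the module, use the change-of-rings isomorphism $\Ext_R^i(N,-)\cong\Ext_{\overline R}^i(\overline N,-)$ on $\overline R$-modules to transport the (SAC) hypothesis to $\overline R$, and then pull the vanishing back via Nakayama. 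Your route avoids the machinery of restricted flat dimension and derived functors used in Theorem~\ref{mainsec3}, at the cost of being specific to the hypersurface-section situation; the paper's citation, by contrast, immediately places the implication under the umbrella of the much more general descent result along any local map of finite flat dimension. Both are valid; yours is more hands-on, theirs is more structural.
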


\begin{proof}
	$(1) \Rightarrow (2)$ follows from Theorem \ref{SACCtoSAC}.
	$(2) \Rightarrow (1)$ follows from \cite[Theorem 4.5(1)]{CT} (see also Theorem \ref{mainsec3}).     
\end{proof}   

The following answers a question of Celikbas and Takahashi (\cite[Question 4.9]{CT}).  

\begin{Corollary}\label{b2.4}
	Let $(R,\mathfrak m)$ be a  local ring with positive depth. Then 
	$R$ satisfies {\rm (SAC)} if and only if $R$ satisfies {\rm (SACC)}.
\end{Corollary}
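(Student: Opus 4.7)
The plan is to chain together the two preceding results of this section, which essentially do all of the work. The forward implication (SAC) $\Rightarrow$ (SACC) is immediate from the definitions, since (SACC) is just the restriction of (SAC) to the subclass of modules having constant rank; no additional argument is needed there.

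For the nontrivial direction, I would argue as follows. Since $R$ has positive depth, one can choose a non-zerodivisor $x \in \fkm$. The hypothesis that $R$ satisfies (SACC) then puts us exactly in the situation of Theorem~\ref{SACCtoSAC}, whose conclusion is that the quotient $R/xR$ satisfies (SAC). Next, to descend this property from $R/xR$ back to $R$, I would apply the implication $(2)\Rightarrow(1)$ of Corollary~\ref{a0.3}: with the same non-zerodivisor $x$ on the positive-depth ring $R$, the fact that $R/xR$ satisfies (SAC) forces $R$ itself to satisfy (SAC). Combining the two steps yields the desired equivalence.

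There is really no obstacle here; the heavy lifting has already been done in Theorem~\ref{SACCtoSAC} and in Corollary~\ref{a0.3} (the latter in turn relying on Theorem~\ref{mainsec3} for the descent along $R \to R/xR$). The only thing worth checking is that the ``positive depth'' hypothesis of the corollary is precisely what is required by both Theorem~\ref{SACCtoSAC} and Corollary~\ref{a0.3}, so the two implications chain together cleanly without introducing any extra side conditions.
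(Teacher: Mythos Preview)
Your proof is correct and follows exactly the same approach as the paper's own proof: the forward direction is trivial, and for the converse you invoke Theorem~\ref{SACCtoSAC} to pass to $R/xR$ and then Corollary~\ref{a0.3} to return to $R$.
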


\begin{proof}
	(SAC) implies (SACC) by definition. Conversely, if $R$ satisfies (SACC), then $R/xR$ satisfies (SAC) by Theorem \ref{SACCtoSAC}, where $x$ is a non-zerodivisor on $R$. Hence $R$ satisfies (SAC) by Corollary \ref{a0.3}.
\end{proof}

\begin{cor}\label{poseries} Let $R$ be a local ring and $T$ be an indeterminate. Then $R$ satisfies $\rm{(SAC)}$ if and only if $R[[T]]$ satisfies $\rm{(SAC)}$.  
\end{cor}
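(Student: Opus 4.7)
The plan is to reduce this to a direct application of Corollary \ref{a0.3}, with the ring being $R[[T]]$ and the non-zerodivisor being $T$ itself. To set this up, I first need to verify the hypotheses of Corollary \ref{a0.3} for the pair $(R[[T]], T)$.

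First, I would record the standard facts: since $R$ is local with maximal ideal $\fkm$, the ring $R[[T]]$ is local with maximal ideal $\fkm R[[T]] + T R[[T]]$; the element $T$ belongs to this maximal ideal; and $T$ is a non-zerodivisor on $R[[T]]$. Consequently $\depth R[[T]] \geq 1$, so $R[[T]]$ has positive depth (regardless of whether $R$ has positive depth or not). Finally, there is a canonical isomorphism $R[[T]]/(T) \cong R$.

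With these ingredients assembled, Corollary \ref{a0.3} applied to the local ring $R[[T]]$ and the non-zerodivisor $T$ yields directly that $R[[T]]$ satisfies $\rm{(SAC)}$ if and only if $R[[T]]/(T)$ satisfies $\rm{(SAC)}$. Combined with the isomorphism $R[[T]]/(T) \cong R$, this gives the desired equivalence, completing the proof.

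Since the argument is a one-line consequence of the preceding corollary, there is no real obstacle; the only thing to check is that the positive-depth hypothesis on $R[[T]]$ comes for free (it does, courtesy of $T$ itself being a regular element in the maximal ideal), so no hypothesis on $R$ is needed beyond being local. The analogue for the polynomial ring $R[T]$ localized at $\fkm + (T)$ could be proved identically.
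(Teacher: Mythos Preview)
Your proposal is correct and follows exactly the same approach as the paper: both apply Corollary \ref{a0.3} to the local ring $R[[T]]$ with the regular element $T$, using $R[[T]]/(T)\cong R$. Your version is simply a more detailed write-up of the same one-line argument.
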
   

\begin{proof} Since $R[[T]]$ is a local ring with regular element $T$ such that $R[[T]]/(T)\cong R$, the claim follows from Corollary \ref{a0.3}.  
\end{proof}    

Based on Corollary \ref{b2.4}, we further consider the following condition. Let $n\ge 0$ be an integer.
\vspace*{1mm}\\
(${\rm SACC}_n$) For every finitely generated $R$-module $M$ of constant rank $n$, $\Ext^{>0}_R(M,R)=0=\Ext^{\gg 0}_R(M,M)$ implies $\Ext^{>0}_R(M,M)=0$
\vspace*{1mm}

In the following, for a finitely generated module $M$ over a not necessarily local ring $R$, by $\syz^n_R M$ we will denote an $n$-th syzygy in some finitely generated free resolution of $M$.

\begin{prop}\label{saccrank} $R$ satisfies {\rm (SACC)} if and only if there exists an integer $n\ge 0$ such that $R$ satisfies ${\rm (SACC_n)}$. 
%
%
%
%
\end{prop}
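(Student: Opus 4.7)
The forward direction is immediate, since every finitely generated $R$-module of constant rank $n$ is in particular an $R$-module of constant rank, so $(\mathrm{SACC})$ implies $(\mathrm{SACC}_n)$ for every $n \geq 0$.

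For the converse, fix an integer $n \geq 0$ for which $(\mathrm{SACC}_n)$ holds and let $M$ be a finitely generated $R$-module of constant rank $k$ satisfying $\Ext^{>0}_R(M, R) = 0$ and $\Ext^{\gg 0}_R(M, M) = 0$. The plan is to construct a module of constant rank exactly $n$ whose Ext vanishing is equivalent to that of $M$, and then apply $(\mathrm{SACC}_n)$ directly. We split into two cases according to whether $k \leq n$ or $k > n$.

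When $k \leq n$, the argument is immediate: set $M' = M \oplus R^{n-k}$, which has constant rank $n$ and inherits both Ext hypotheses by additivity of $\Ext$ and the vanishing $\Ext^{>0}_R(R,-) = 0$. Applying $(\mathrm{SACC}_n)$ to $M'$ gives $\Ext^{>0}_R(M', M') = 0$, and since $\Ext^{>0}_R(M, M)$ is a direct summand of $\Ext^{>0}_R(M', M')$ we are done. When $k > n$, take a short exact sequence $0 \to L \to R^N \to M \to 0$ with $N \geq \mu_R(M)$; then $L$ has constant rank $N - k$. The hypothesis $\Ext^{>0}_R(M, R) = 0$ forces $\Ext^{\geq 1}_R(M, R^N) = 0$, from which standard dimension-shifting arguments give both $\Ext^{\geq 1}_R(L, R) = 0$ and, successively applying $\Hom(M, -)$ and $\Hom(-, L)$ to the defining sequence, the isomorphism $\Ext^i_R(L, L) \cong \Ext^{i+1}_R(M, L) \cong \Ext^i_R(M, M)$ for all $i \geq 1$. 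Thus $L$ satisfies the hypotheses of $(\mathrm{SACC})$, and $\Ext^{>0}_R(L, L) = 0$ if and only if $\Ext^{>0}_R(M, M) = 0$. Choosing $N = k + n$ makes $L$ have constant rank exactly $n$, and $(\mathrm{SACC}_n)$ then concludes the argument --- provided the choice $N = k + n$ is admissible, i.e.\ $\mu_R(M) \leq k + n$.

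The delicate point is the case $\mu_R(M) > k + n$, where the direct choice $N = k + n$ is unavailable. The strategy here is to iterate the syzygy reduction: take $N = \mu_R(M)$ to obtain $L$ of rank $\mu_R(M) - k$, which still satisfies the hypotheses of $(\mathrm{SACC})$ and has $\Ext^{>0}_R(L, L) \cong \Ext^{>0}_R(M, M)$ by the isomorphism above, and then repeat the construction with $L$ in place of $M$. I expect the main obstacle to lie precisely in verifying that this iteration terminates at a syzygy of rank at most $n$, so that Case 1 can be invoked to complete the reduction. Using the relation $\mu_R(\Omega_R^i M) = \mathrm{rk}(\Omega_R^i M) + \mathrm{rk}(\Omega_R^{i+1} M)$ coming from the minimal free resolution, the termination reduces to showing that some $\mathrm{rk}(\Omega_R^i M) \leq n$, which should be accessible inductively from the $\Ext$-hypotheses we carry along each step.
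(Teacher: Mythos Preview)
Your reduction in the case $k \le n$ is correct, and so is the sub-case $k > n$ with $\mu_R(M) \le k+n$. The genuine gap is the remaining case: the proposed iteration need not terminate, and the Ext hypotheses do not force it to. The termination condition you isolate --- that some $e_i := \operatorname{rank}(\syz^i_R M)$ eventually satisfies $e_i \le n$ --- already fails for $n = 0$. Each $\syz^i_R M$ (for $i \ge 1$) is torsion-free of constant rank, and a nonzero such module has rank at least $1$; so $e_i = 0$ forces $\syz^i_R M = 0$, i.e.\ $\pd_R M < \infty$. But under $\Ext^{>0}_R(M,R) = 0$ and $\Ext^{\gg 0}_R(M,M) = 0$, finiteness of $\pd_R M$ is precisely equivalent to the desired conclusion $\Ext^{>0}_R(M,M) = 0$ (Remark~\ref{zzz2.2}), so your argument is circular. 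For $n \ge 1$ the same obstruction arises whenever the Betti numbers $b_i = \mu_R(\syz^i_R M)$ are strictly increasing: from your own relation $e_{i+1} = b_i - e_i$ one gets $e_{i+2} - e_i = b_{i+1} - b_i > 0$, so the syzygy ranks grow without bound. (There is also a smaller issue: the relation $\mu_R(\syz^i_R M) = e_i + e_{i+1}$ presupposes a minimal free resolution, hence tacitly assumes $R$ is local, which the proposition does not.)

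The paper's argument goes in the opposite direction: instead of taking kernels of surjections $R^N \twoheadrightarrow M$ (where one has no control over how small $N$ can be), it takes a cokernel of an embedding of a free module. Arguing by contradiction that $\pd_R M = \infty$ (so $e_1, e_2 \ge 1$), one forms the torsion-free module $X = (\syz^1_R M)^{\oplus n} \oplus \syz^2_R M$ of rank $e = ne_1 + e_2 \ge n+1$, uses \cite[Proposition~1.4.3]{BH} to embed $R^{\oplus(e-n)} \hookrightarrow X$, and sets $L = X/R^{\oplus(e-n)}$, which has rank exactly $n$ in a single step. The Ext hypotheses transfer to $L$ along this sequence, $(\mathrm{SACC}_n)$ yields $\Ext^{>0}_R(L,L) = 0$, and unwinding gives $\Ext^1_R(\syz^1_R M, \syz^2_R M) = 0$, contradicting $\pd_R M = \infty$.
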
 

\begin{proof} It is enough to show that if $R$ satisfies ${\rm (SACC_n)}$ for some $n\geq 0$, then $R$ satisfies ${\rm (SACC)}$. Let $M$ be a nonzero finitely generated module of  constant rank such that $\Ext^{>0}_R(M,R)=0=\Ext^{\gg 0}_R(M,M)$. Suppose that $\pd_R M=\infty$. Thus, $\syz_R^jM\ne 0$ for all $j\geq 0$. For all $j\geq 1$, set $e_j=\text{rank } \om^j_R M$. Since each $\syz_R^j M$ is torsion-free and non-zero, $e_j\ge 1$. We also have $$\Ext_R^{>0}(\om^j_R M, R)=0\mbox{, } \Ext_R^{\gg 0}(\om^j_R M, \om^j_R M)=0.$$ 
	Now let $X:=(\om^1_R M)^{\oplus n}\oplus \om^{2}_R M$. Then $X$ has rank $e=ne_1+e_{2}\geq n+1$ and $X$ has a free submodule of rank $e$ (\cite[Proposition 1.4.3]{BH}). Thus we can consider a short exact sequence of $R$-modules 
	\begin{align}\label{eq51}
	0\longrightarrow R^{\oplus{(e-n)}}\longrightarrow X\longrightarrow L\longrightarrow 0
	\end{align}
	where $L$ has rank $n$.  
  Since $\Ext^{>0}_R(X,R)=0=\Ext^{\gg 0}_R(X,X)$, applying $\Hom_R(X,-)$ to \eqref{eq51}, we get $\Ext^{\gg 0}_R(X,L)=0.$ Again applying $\Hom_R(-,L)$ to \eqref{eq51}, we get $\Ext^{\gg 0}_R(L,L)=0.$ Now $\Ext^{> 0}_R(X,R)=0$ implies $\Ext^{> 0}_R(L,R)=0$. Since $R$ satisfies ${\rm (SACC_n)}$, it follows that $\Ext^{\geq 1}_R(L,L)=0.$ 
Applying  $\Hom_R(-,L)$ to \eqref{eq51}, we get $\Ext^{\geq 1}_R(X,L)=0.$ Finally applying $\Hom_R(X,-)$ to \eqref{eq51}, we get $\Ext^{\geq 1}_R(X,X)=0$  and hence 
$0= \Ext^{1}_R(\om^{1}_R M,\om^{2}_R M)$. Therefore
 $\om^{1}_R M$ is projective $R$-module and hence $\pd_R M<\infty$. Now $\Ext^{>0}_R(M,R)=0$ implies $M$ is a projective $R$-module and thus $R$ satisfies (SACC).
\end{proof}  

From Corollary \ref{b2.4} and Proposition \ref{saccrank} we get the following immediate consequence.  

\begin{cor}\label{c29}
 Let $R$ be a local ring of positive depth. Then $R$ satisfies {\rm (SAC)} if and only if $R$ satisfies ${\rm (SACC_n)}$ for some integer $n\ge 0$.
\end{cor}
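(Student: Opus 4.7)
The plan is to obtain Corollary \ref{c29} as a direct chain of equivalences using the two preceding results, without introducing any new argument. For the forward direction, I would note that (SAC) trivially implies (SACC$_n$) for \emph{every} $n \ge 0$, simply by restricting the universal quantification over finitely generated modules to those of constant rank $n$; this half needs no hypothesis on $R$ at all. For the backward direction, I would suppose (SACC$_n$) holds for some specific $n \ge 0$; then Proposition \ref{saccrank} (which is proved without any depth assumption) immediately upgrades this to (SACC) for $R$, and since $R$ is assumed of positive depth, Corollary \ref{b2.4} then upgrades (SACC) to (SAC).

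Because both ingredients are already in hand in the paper, there is no genuine obstacle. The only thing to check is that the hypotheses of the two inputs match what is available: Proposition \ref{saccrank} needs no depth assumption and can therefore be applied freely, while the implication $(\mathrm{SACC}) \Rightarrow (\mathrm{SAC})$ in Corollary \ref{b2.4} requires positive depth, which is precisely the standing hypothesis of the corollary. Concatenating the two equivalences yields
\[
(\mathrm{SAC}) \ \Longleftrightarrow\ (\mathrm{SACC}) \ \Longleftrightarrow\ (\mathrm{SACC}_n)\ \text{for some}\ n \ge 0,
\]
which is exactly the assertion of Corollary \ref{c29}.
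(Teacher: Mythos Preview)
Your proof is correct and is exactly the paper's approach: the paper simply states that Corollary \ref{c29} is an immediate consequence of Corollary \ref{b2.4} and Proposition \ref{saccrank}, which is precisely the chain of equivalences you wrote.
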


\section{descent of (SAC) and (ARC) along local homomorphism of finite flat dimension} \label{section3}


Let $R$ be a ring. An $R$-module $M$ is said to have finite flat dimension if there exists an integer $n\ge 0$ and an exact sequence $0\to T_{n} \to \cdots \to T_0\to M\to 0$ where each $T_i$ is a flat $R$-module.  The aim of this section is to prove the following result.  

\begin{thm}\label{mainsec3} Let $R \to S$ be a homomorphism of local rings such that $\fd_R S<\infty$. Then the following hold. 
\begin{enumerate}[\rm(1)]

\item If $S$ satisfies {\rm (SAC)}, then $R$ satisfies {\rm(SAC)}.

\item If $S$ satisfies {\rm (ARC)}, then $R$ satisfies {\rm (ARC)}.   
\end{enumerate}
\end{thm}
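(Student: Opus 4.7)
My plan is to convert the problem over $R$ into one over $S$ by passing to a high enough syzygy. Set $d := \fd_R S < \infty$. For a finitely generated $R$-module $M$, choose an integer $n \ge d$ and put $N := \syz^n_R M$ and $N' := N \otimes_R S$. Since $n \ge d$, $\Tor^R_{>0}(N,S) = 0$, so $N' = N \lotimes_R S$, and the standard derived change-of-rings adjunction $\rhom_R(N,L) \simeq \rhom_S(N',L)$ for every $S$-module $L$ gives natural isomorphisms $\Ext^i_S(N', L) \cong \Ext^i_R(N, L)$. The idea is to show that the (SAC) (resp.\ (ARC)) hypotheses on $M$ over $R$ force the analogous hypotheses on $N'$ over $S$; the assumption on $S$ then yields that $N'$ is $S$-free, which is finally descended to $M$ using that $R \to S$ is local.

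Two technical inputs carry the transfer. First, for a finitely presented $R$-module $X$, any $R$-module $L$, and a flat $R$-module $F$, the natural identification $\Ext^i_R(X, L \otimes_R F) \cong \Ext^i_R(X,L) \otimes_R F$ holds (from $\Hom_R(P, L \otimes_R F) \cong \Hom_R(P, L) \otimes_R F$ for finite free $P$, followed by $H^i$ and flatness of $F$). Applying this with $L \in \{R, N\}$ along a flat resolution $0 \to F_d \to \cdots \to F_0 \to S \to 0$ of $S$, and running the usual LES/dimension-shift from $F_d$ down to $S$, upgrades $\Ext^{>0}_R(N, R) = 0$ to $\Ext^{>0}_R(N, S) = 0$, and $\Ext^{\gg 0}_R(N, N) = 0$ to $\Ext^{\gg 0}_R(N, N \otimes_R S) = 0$ (respectively, the stronger $\Ext^{>0}_R(N, N) = 0$ to $\Ext^{>0}_R(N, N \otimes_R S) = 0$). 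Second, because $R \to S$ is a local homomorphism, $\m_R S \subseteq \m_S$; combined with $\Tor^R_{>0}(N,S) = 0$, tensoring a minimal $R$-free resolution of $N$ with $S$ yields a minimal $S$-free resolution of $N'$, so $\pd_S N' = \pd_R N$, and in particular $N'$ is $S$-free iff $N$ is $R$-free.

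For (1), starting from $\Ext^{>0}_R(M,R) = 0$ and $\Ext^{\gg 0}_R(M,M) = 0$, I would pick $n \ge d$ large enough that $\Ext^{>n}_R(M,M) = 0$. Using $\Ext^{>0}_R(M,R) = 0$, dimension-shifting in each variable gives $\Ext^{>0}_R(N,R) = 0$ and $\Ext^i_R(N,N) \cong \Ext^i_R(M,M)$ for $i \ge 1$, so $\Ext^{\gg 0}_R(N,N) = 0$. Combined with the transfer tools and the adjunction, $N'$ satisfies $\Ext^{>0}_S(N', S) = 0$ and $\Ext^{\gg 0}_S(N', N') = 0$. By (SAC) over $S$ (equivalently (GARC), by Lemma \ref{b2.1}), $\pd_S N' < \infty$; combined with $\Ext^{>0}_S(N', S) = 0$, the minimal-resolution argument recalled in the proof of Lemma \ref{b2.1} forces $\pd_S N' = 0$, i.e.\ $N'$ is $S$-free. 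The minimal-resolution descent then yields that $N$ is $R$-free, so $\pd_R M \le n < \infty$; applying the same minimal-resolution argument over $R$ (now using $\Ext^{>0}_R(M,R) = 0$) forces $\pd_R M = 0$, giving $\Ext^{>0}_R(M,M) = 0$. Part (2) runs identically with $\Ext^{\gg 0}$ replaced by $\Ext^{>0}$ throughout: the stronger hypothesis produces $\Ext^{>0}_S(N', N' \oplus S) = 0$, and (ARC) for $S$ applies directly.

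The step I expect to be the main obstacle is the flat-dimension ascent on the second argument of $\Ext$: showing $\Ext^{\gg 0}_R(N, N \otimes_R S) = 0$ requires commuting $\Ext$ past $(-) \otimes_R F_j$ for each flat piece and carefully iterating the LES along a flat resolution of $S$, and this is where finite presentation of $N$ is essential. Everything else is routine dimension-shifting together with the (easy but crucial) observation that locality of $R \to S$ preserves minimality of free resolutions.
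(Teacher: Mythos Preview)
Your proposal is correct and follows the same overall architecture as the paper's proof: pass to a high syzygy $N=\syz^n_R M$ to kill $\Tor^R_{>0}(N,S)$, push the Ext-vanishing hypotheses along $R\to S$ to the finitely generated $S$-module $N'=N\otimes_R S$, invoke (SAC)/(ARC) over $S$, and then descend finiteness of projective dimension using that a local map preserves minimality of free resolutions when $\Tor$ vanishes. Where you diverge is in the machinery used for the transfer step. The paper introduces the restricted flat dimension $\rfd_R M$ (so that $N$ is rigid against \emph{all} modules of finite flat dimension) and proves the ascent of Ext-vanishing via derived-category isomorphisms such as $\rhom_R(X,M)\lotimes_R Y\simeq \rhom_R(X,M\lotimes_R Y)$ for $\fd_R Y<\infty$. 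You instead take $n\ge \fd_R S$ directly, use the elementary identity $\Ext^i_R(X,L\otimes_R F)\cong \Ext^i_R(X,L)\otimes_R F$ for finitely presented $X$ and flat $F$, and then dimension-shift along a finite flat resolution of $S$ (noting that the intermediate kernels $K_j$ also have $\fd_R K_j\le d$, so $\Tor^R_{>0}(N,K_j)=0$ and the sequences stay exact after $N\otimes_R-$). Your route is more elementary and self-contained, avoiding both $\rfd$ and the derived category; the paper's route is packaged more cleanly once the derived lemmas are in place and yields slightly more general auxiliary statements (Propositions \ref{3} and \ref{9}) that get reused elsewhere.
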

As a consequence of the above theorem, we get the following.
\begin{cor}\label{mainseccor3} Let $R \to S$ be a homomorphism of local rings such that $\fd_R S<\infty$. Let $Q \to S$ be a surjective homomorphism of local rings whose kernel is generated by a $Q$-regular sequence. Then the following hold.  
\begin{enumerate}[\rm(1)]

\item If $Q$ satisfies {\rm (SAC)}, then $R$ satisfies {\rm(SAC)}.

\item Suppose that $Q$ is complete and $Q$ satisfies {\rm (ARC)}. Then $R$ satisfies {\rm (ARC)}.  
\end{enumerate}
\end{cor}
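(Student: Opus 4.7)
The plan is to factor the statement through $S$, combining Theorem~\ref{mainsec3} (which transports (SAC) / (ARC) from $S$ to $R$) with a lifting result that transports (SAC) / (ARC) from $Q$ to $S$ along the regular-sequence quotient. Write the kernel of $Q\twoheadrightarrow S$ as $(x_1,\ldots,x_n)$ where $x_1,\ldots,x_n$ is a $Q$-regular sequence, and set $Q_i := Q/(x_1,\ldots,x_i)$, so that $Q_0=Q$ and $Q_n=S$. If $n=0$, then $S=Q$ and Theorem~\ref{mainsec3} applies verbatim; otherwise, by regularity, $x_{i+1}$ is a nonzerodivisor on $Q_i$ for every $i<n$, and in particular $\depth Q_i >0$.

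For~(1), I would prove by induction on $i$ that $Q_i$ satisfies (SAC). The base case $i=0$ is the hypothesis. For the inductive step, $Q_i$ has positive depth and $x_{i+1}$ is a nonzerodivisor on it, so Corollary~\ref{a0.3} gives that $Q_{i+1}=Q_i/(x_{i+1})$ satisfies (SAC). At $i=n$ we conclude $S$ satisfies (SAC); then, since $\fd_R S<\infty$, Theorem~\ref{mainsec3}(1) yields (SAC) for $R$.

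For~(2), I would run the same induction, but invoking the (ARC) analogue of Corollary~\ref{a0.3} available in the complete setting, namely \cite[Proposition 2.2]{K}, which states that if $Q$ is a complete local ring of positive depth and $x$ is a nonzerodivisor on $Q$, then (ARC) for $Q$ is equivalent to (ARC) for $Q/xQ$. Two observations make the induction go through: completeness passes to quotients, so each $Q_i$ is complete; and as noted above each $Q_i$ has positive depth. Hence $S$ satisfies (ARC), and Theorem~\ref{mainsec3}(2) then transports (ARC) to $R$.

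There is essentially no hard step: the statement is arranged exactly so that the ascent-along-a-regular-sequence results (Corollary~\ref{a0.3} for (SAC), and \cite[Proposition 2.2]{K} for (ARC) in the complete case) compose with the descent-along-finite-flat-dimension result of Theorem~\ref{mainsec3}. The only minor points to be checked are that the intermediate rings $Q_i$ meet the hypotheses needed at each stage (positive depth in both parts, and completeness in part~(2)), and both of these are immediate from the regularity of $x_1,\ldots,x_n$ and, in (2), from the completeness of $Q$.
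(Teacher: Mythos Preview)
Your proposal is correct and follows essentially the same approach as the paper: use Corollary~\ref{a0.3} (for (SAC)) or \cite[Proposition 2.2]{K} (for (ARC) in the complete case) iteratively along the regular sequence to pass from $Q$ to $S$, then apply Theorem~\ref{mainsec3} to descend from $S$ to $R$. The paper's proof is simply a terse two-line version of exactly this argument, and your more detailed induction with the intermediate rings $Q_i$ is the natural way to unpack it.
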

\begin{proof}(1) This follows from Corollary \ref{a0.3} and Theorem \ref{mainsec3} (1). 

(2) This follows from \cite[Proposition 2.2]{K} and Theorem \ref{mainsec3} (2). 
\end{proof}

We mention here that the statement of Corollary \ref{mainseccor3}(1) appears in \cite[Theorem 6.4]{AINSW}, but the proof is omitted with a remark that the proof is similar to the proof of \cite[Theorem 5.3]{AINSW}. Among other things, the proof of \cite[Theorem 5.3]{AINSW} relies crucially on the ring of cohomology operators and the structure of the graded Ext-algebra on it. In contrast, our proof relies only on basic homological algebra of complexes and gives a unified way for proving the descent of (SAC) and (ARC) along local ring map of finite flat dimension. Thus, one can see the proofs are different.

To prove Theorem \ref{mainsec3} we need some preparatory technical results.  

 We recall from \cite[(5.3.1) Definition, (5.3.6) Theorem]{ch} (and \cite[(A.3.12) Remark]{ch}) that for any $R$-module $M$, one has 
\begin{align*}
&\sup \{m\in \mathbb Z \mid \text{there exists a module } T \text{ of finite flat dimension such that } \Tor^R_m(M,T)\ne 0\}\\
=&\sup \{\depth R_{\p}-\depth_{R_{\p}} M_{\p} \mid \p \in \spec(R) \}. 
\end{align*} 

Following \cite[(1.0.1)]{ai}, we call the above common quantity the {\it restricted flat dimension} and denote by $\rfd_R M$. If $M\ne 0$, then $\rfd_R M\ge 0$. When $\dim R<\infty$, it is clear that $\rfd_R M\le \dim R<\infty$. In general, if $M$ is a finitely generated $R$-module, then $\rfd_R M<\infty$ by \cite[Theorem 1.1]{ai}. By convention, we put $\rfd_R 0=-\infty$. Note that $\rfd_R M\le 0$ if and only if $\Tor^R_{>0}(M,T)=0$ for every module $T$ of finite flat dimension. In the following, we record some basic observations about $\rfd_R(-)$ which will be crucial for proving the main results of this section. Throughout this section, when $R$ is not local, given a finitely generated $R$-module $M$, $\syz^n_R M$ will denote the $n$-th syzygy in a resolution of $M$ by finitely generated projective $R$-modules. 

\begin{lem}\label{1} Let $M$ be a finitely generated $R$-module. Then the following hold:
	\begin{enumerate}[\rm(1)]
		\item Let $n\le \rfd_R M$ be an integer. Then $\Tor^R_i(\syz^n_RM,T)=0$ for every $i>\rfd_RM-n$ and every module $T$ of finite flat dimension.
		
		\item Let $n\ge \rfd_R M$ be an integer. Then $\rfd_R \syz^n_R M\le 0$, i.e. $\Tor^R_i(\syz^n_R M, T)=0$ for every $i>0$ and every module $T$ of finite flat dimension. 
		
		\item Let $M,X$ be finitely generated $R$-modules such that $\rfd_R M \le 0$ and $\Ext^{>0}_R(X,M)=0$. Then $\rfd_R \Hom_R(X,M)\le 0$.   
	\end{enumerate}
\end{lem}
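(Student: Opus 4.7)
The plan is to prove (1) and (2) by dimension shifting, and to prove (3) by building a coresolution of $\Hom_R(X,M)$ whose terms all have restricted flat dimension $\le 0$, then killing Tor from the far end using the finiteness of $\fd_R T$.

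For (1) and (2), I would start from a resolution $\cdots\to P_1\to P_0\to M\to 0$ by finitely generated projective $R$-modules. Breaking it into short exact sequences $0\to \syz^{k+1}_R M\to P_k\to \syz^k_R M\to 0$ and using that each $P_k$ is flat, the standard dimension shifting yields
$$\Tor^R_i(\syz^n_R M,T)\cong \Tor^R_{i+n}(M,T) \qquad \text{for every } i\ge 1$$
and every $R$-module $T$. For (1), the hypotheses $n\le \rfd_R M$ and $i>\rfd_R M-n$ force both $i\ge 1$ and $i+n>\rfd_R M$, so the right-hand side vanishes by the very definition of $\rfd_R$. For (2), any $i\ge 1$ automatically satisfies $i+n>\rfd_R M$, and the same vanishing applies.

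For (3), my plan is to apply $\Hom_R(-,M)$ to a resolution $P_\bullet\to X$ by finitely generated free $R$-modules. Since $\Ext^{>0}_R(X,M)=0$, the resulting augmented cochain complex
$$0\to \Hom_R(X,M)\to \Hom_R(P_0,M)\to \Hom_R(P_1,M)\to \cdots$$
is exact. Each $\Hom_R(P_j,M)$ is isomorphic to a finite direct sum of copies of $M$ and therefore has $\rfd_R\le 0$ by hypothesis. Setting $Z_0:=\Hom_R(X,M)$ and $Z_j:=\ker\bigl(\Hom_R(P_j,M)\to \Hom_R(P_{j+1},M)\bigr)$ for $j\ge 1$, I get short exact sequences $0\to Z_j\to \Hom_R(P_j,M)\to Z_{j+1}\to 0$. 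Fix $i>0$ and any $R$-module $T$ with $d:=\fd_R T<\infty$. The long exact sequence of $\Tor^R_\bullet(-,T)$ applied to each of these short exact sequences, together with $\Tor^R_{>0}(\Hom_R(P_j,M),T)=0$, shows that $\Tor^R_i(Z_0,T)$ is a quotient of $\Tor^R_{i+k}(Z_k,T)$ for every $k\ge 0$. Taking $k=d$, the group $\Tor^R_{i+d}(Z_d,T)$ vanishes because a flat resolution of $T$ of length $d$ forces $\Tor^R_j(-,T)=0$ for every $j>d$. Hence $\Tor^R_i(\Hom_R(X,M),T)=0$, proving $\rfd_R\Hom_R(X,M)\le 0$.

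The main obstacle lies in (3): a first instinct is to induct on $\fd_R T$ via a short exact sequence $0\to K\to F\to T\to 0$ with $F$ flat, but that argument stalls at $i=1$, where injectivity of $\Hom_R(X,M)\otimes K\to \Hom_R(X,M)\otimes F$ is not available. The coresolution strategy sidesteps this by invoking $\fd_R T<\infty$ only once, at the end, to kill Tor in sufficiently high degrees in a single stroke.
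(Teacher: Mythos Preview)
Your proof is correct. Parts (1) and (2) coincide with the paper's argument verbatim.

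For part (3), both you and the paper start from the same coresolution
\[
0\to \Hom_R(X,M)\to M^{\oplus b_0}\to M^{\oplus b_1}\to\cdots
\]
obtained by applying $\Hom_R(-,M)$ to a free resolution of $X$ and using $\Ext^{>0}_R(X,M)=0$. From there the arguments diverge. The paper localizes at an arbitrary prime $\p$, truncates the coresolution at step $\depth R_\p$, and applies the depth lemma together with the depth formula $\rfd_R N=\sup_{\p}\{\depth R_\p-\depth_{R_\p}N_\p\}$ to conclude $\depth_{R_\p}\Hom_R(X,M)_\p\ge \depth R_\p$. You instead stay with the Tor definition throughout: you shift $\Tor^R_i(Z_0,T)$ along the short exact sequences to realize it as a quotient of $\Tor^R_{i+d}(Z_d,T)$, which vanishes once $d=\fd_R T$. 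Your route avoids invoking the (nontrivial) equality between the Tor and depth descriptions of $\rfd$ and is in that sense more self-contained; the paper's route is a one-line application of the depth lemma once that equality is granted.
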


\begin{proof} (1): For any module $T$ of finite flat dimension  and $i>\rfd_RM-n\ge 0$ (hence $i+n>\rfd_R M$), we have $\Tor^R_i(\syz^n_R M, T)\cong \Tor^R_{i+n}(M,T)=0$.
	
	(2): For all  $i>0$, $n\ge \rfd_R M$ (hence $i+n>n\ge \rfd_R M$)  and any module $T$ of finite flat dimension, we have  $\Tor^R_i(\syz^n_R M, T)\cong \Tor^R_{i+n}(M,T)=0$.
	
	(3): Let $\p \in \spec(R)$. Let $n=\depth R_{\p}$. Since $\rfd_R M \le 0$, we have $\depth_{R_{\p}} M_{\p}\ge \depth R_{\p}$. Let $R^{\oplus b_n}\to \cdots\to R^{\oplus b_0}\to X\to 0$ be part of an $R$-free resolution of the finitely generated $R$-module $X$. Since $\Ext^{>0}_R(X,M)=0$, applying $\Hom_R(-,M)$ to the above long exact sequence, we get an exact sequence $0\to \Hom_R(X,M)\to M^{\oplus b_0}\to \cdots \to M^{\oplus b_n}\to T\to 0$ for some $R$-module $T$.
	
	By depth lemma, we have 
	\[
	\depth_{R_{\p}} \Hom_R(X,M)_{\p}\ge \inf \{\depth_{R_{\p}} M_{\p}, n+\depth_{R_{\p}}T_{\p}\}\ge \depth R_{\p}.
	\] 
	As $\depth_{R_{\p}} \Hom_R(X,M)_{\p}\ge \depth R_{\p}$ holds for all $\p\in \spec(R)$, we get $\rfd_R \Hom_R(X,M)=\sup \{\depth R_{\p}-\depth_{R_{\p}} \Hom_R(X,M)_{\p}\mid \p\in \spec(R)\}\le 0$.  
\end{proof}

\begin{rem} If $M$ is a non-zero maximal Cohen-Macaulay module over a Cohen-Macaulay ring $R$, then  it follows immediately  that $\rfd_R M=0$. More generally, see \cite[(5.3.10) Theorem]{ch}.  
\end{rem}  

 To prove descent of (SAC) and (ARC), we need to prove the following results related to vanishing of Ext and Tor and base change along finite flat dimension using techniques involving chain complexes and derived Hom and tensor product (see \cite[A.4]{ch}).

The derived category of the category of $R$-modules, denoted by $\mathcal D(R)$, is the category of $R$-complexes localized at the class of all quasi-isomorphisms (\cite[A.1.13]{ch}). The full subcategory $\mathcal D^b(R)$ consists of complexes $X$ such that $H_i(X)=0$ for all $|i|\gg 0$. Every $R$-module is identified as a complex concentrated in degree zero. 

\begin{prop}\label{3}  Let $S$ be an $R$-algebra and $M$ be an $R$-module. Then the following hold.
	\begin{enumerate}[\rm(1)]
		\item Assume $\Tor^R_{>0}(M,S)=0$ (for example, $\fd_RS<\infty$ and $\rfd_R M \le 0$). Then for all $S$-modules $N$ and for all $n\ge 0$,  $\Ext_R^n(M,N)\cong\Ext_S^n(M\otimes_RS,N)$ and $\Tor^R_n(M,N)\cong\Tor^S_n(M\otimes_RS,N)$. 
		
		\item  Let $Y$ be an $R$-module such that $\fd_R Y<\infty$. Assume $\rfd_R M \le 0$. Let $X$ be a finitely generated $R$-module. If $\Ext^{\gg 0}_R(X,M)=0$, then $\Ext^{\gg 0}_R(X,M\otimes_R Y)=0$.
		
		\item Assume $\fd_R S<\infty$ and $\rfd_R M \le 0$. Let $X$ be a finitely generated $R$-module such that $\rfd_R X \le 0$. If $\Ext^{\gg 0}_R(X,M)=0$, then $\Ext^{\gg 0}_S(X\otimes_R S,M\otimes_R S)=0$. 
		
		\item Let $Y$ be an $R$-module such that $\fd_R Y<\infty$. Assume $\rfd_R M \le 0$. Let $X$ be an $R$-module. If $\Tor_{\gg 0}^R(X,M)=0$, then $\Tor_{\gg 0}^R(X,M\otimes_R Y)=0$. 
		
		\item Assume $\fd_R S<\infty$ and $\rfd_R M \le 0$. Let $X$ be an $R$-module such that $\rfd_R X \le 0$. If $\Tor_{\gg 0}^R(X,M)=0$, then $\Tor_{\gg 0}^S(X\otimes_RS,M\otimes_R S)=0$. 
	\end{enumerate}
\end{prop}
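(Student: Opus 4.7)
The plan is to establish (1) as a formal derived-category computation, prove (2) by a double-complex argument, and then deduce (3)--(5) from (1) together with (2) or its Tor analogue.

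For (1), the hypothesis $\Tor^R_{>0}(M,S)=0$ is exactly the statement that the natural map $M\ltensor_R S\to M\otimes_R S$ is a quasi-isomorphism in $\D(S)$. The adjunction $\rhom_R(-,N)\simeq \rhom_S(-\ltensor_R S,N)$, valid for $N\in\D(S)$, then gives $\rhom_R(M,N)\simeq \rhom_S(M\otimes_R S,N)$, and cohomology produces the Ext isomorphism. The Tor isomorphism comes from associativity, $M\ltensor_R N\simeq (M\ltensor_R S)\ltensor_S N\simeq (M\otimes_R S)\ltensor_S N$.

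For (2), I would pick a finite flat resolution $F_\bullet\to Y$ of length $d=\fd_R Y$ and a resolution $P_\bullet\to X$ by finitely generated projective $R$-modules (possible since $R$ is Noetherian and $X$ is finitely generated), and work with the double complex $C^{i,j}=\Hom_R(P_i,M\otimes_R F_j)$, with $i\ge 0$ and $0\le j\le d$. Since each $P_i$ is finitely generated projective, $\Hom_R(P_i,M\otimes_R F_j)\cong \Hom_R(P_i,M)\otimes_R F_j$, and flatness of $F_j$ gives $H^i(\Hom_R(P_\bullet,M)\otimes_R F_j)\cong \Ext^i_R(X,M)\otimes_R F_j$; by hypothesis this is zero once $i$ exceeds the threshold from $\Ext^{\gg 0}_R(X,M)=0$, so $H^n(\mathrm{Tot}\,C)=0$ for $n\gg 0$. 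On the other hand, $\rfd_R M\le 0$ and $\fd_R Y<\infty$ force $\Tor^R_{>0}(M,Y)=0$, so the other spectral sequence collapses to identify $H^n(\mathrm{Tot}\,C)\cong\Ext^n_R(X,M\otimes_R Y)$, yielding the desired vanishing.

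Given (1) and (2), the remaining parts follow formally. For (3), $\rfd_R X\le 0$ with $\fd_R S<\infty$ forces $\Tor^R_{>0}(X,S)=0$, so (1) applied to the pair $(X,M\otimes_R S)$ (with $M\otimes_R S$ viewed as an $S$-module) yields $\Ext^n_R(X,M\otimes_R S)\cong\Ext^n_S(X\otimes_R S,M\otimes_R S)$ for every $n$; combined with (2) for $Y=S$, this finishes. Part (4) is the Tor analogue of (2): from $\Tor^R_{>0}(M,Y)=0$ one has $X\ltensor_R(M\otimes_R Y)\simeq (X\ltensor_R M)\ltensor_R Y$, and boundedness of $X\ltensor_R M$ (from $\Tor^R_{\gg 0}(X,M)=0$) together with $\fd_R Y<\infty$ keeps the right-hand side homologically bounded. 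Finally, (5) is derived from (1) and (4) in the same way (3) was derived from (1) and (2). The most delicate step I anticipate is the double-complex bookkeeping in (2): both spectral sequences converge strongly because the $j$-index is bounded and the $i$-index is first-quadrant, but carefully identifying one limit with $\Ext^*_R(X,M\otimes_R Y)$ (via collapse using $\Tor^R_{>0}(M,Y)=0$) and propagating the $i$-direction vanishing through the total complex is the technical heart of the argument.
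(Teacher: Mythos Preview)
Your proposal is correct, and for parts (3)--(5) it matches the paper's argument essentially verbatim. For (1) the paper is slightly more elementary: it observes that the Tor-vanishing turns an $R$-free resolution of $M$ into an $S$-free resolution of $M\otimes_R S$ and then applies ordinary Hom-tensor adjointness and associativity of tensor product; your derived-adjunction formulation is an equivalent repackaging. The one place you take a genuinely different route is (2). The paper invokes the derived tensor-evaluation isomorphism $\rhom_R(X,M)\ltensor_R Y\simeq \rhom_R(X,M\ltensor_R Y)$ from Christensen's book (valid since $X$ is finitely generated and $\fd_R Y<\infty$), replaces $M\ltensor_R Y$ by $M\otimes_R Y$ via $\rfd_R M\le 0$, and concludes boundedness of the right-hand side from boundedness of $\rhom_R(X,M)$ together with $\fd_R Y<\infty$. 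Your double-complex argument is the concrete spectral-sequence realization of exactly this isomorphism: the first spectral sequence encodes $\rhom_R(X,M)\ltensor_R Y$, the second encodes $\rhom_R(X,M\otimes_R Y)$, and your collapse step via $\Tor^R_{>0}(M,Y)=0$ is precisely the replacement of $M\ltensor_R Y$ by $M\otimes_R Y$. The paper's version is shorter because it outsources the identification to a reference; yours is more self-contained at the cost of the convergence and indexing bookkeeping you correctly flag.
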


\begin{proof} (1): The proof is the same as that of \cite[Lemma 2(ii) and (iii), page 140]{Mat} by noticing that an $R$-free resolution $L_{\bullet}\to M\to 0$ induces an $S$-free resolution $L_{\bullet}\otimes_RS\to M\otimes_RS\to 0$ as $\Tor^R_{>0}(M,S)=0$. The required isomorphism for Ext follows by computing cohomology after applying $\Hom_R(-,N)$ and using Hom-tensor adjointness (i.e. $\Hom_R(L,N)\cong \Hom_R(L,\Hom_S(S,N))\cong \Hom_S(L\otimes_R S,N)$ holds for any $R$-module $L$, see \cite[Formula 9 of Appendix A]{Mat}). The required isomorphism  for Tor follows similarly by associativity of tensor product. 
	
	(2): Since $\fd_R Y<\infty$ and $X$ is finitely generated, we have 
	$\rhom_R(X,M)\lotimes_R Y\cong \rhom_R(X,M\lotimes_R Y)$ by \cite[(A.4.23)]{ch}. 
	Since $\rfd_R M \le 0$, so $\Tor^R_{>0}(M,Y)=0$. It follows that $M\lotimes_R Y\cong M\otimes_R Y$. Hence, $\rhom_R(X,M)\lotimes_RY\cong \rhom_R(X,M\otimes_R Y)$. Since $\Ext^{\gg 0}_R(X,M)=0$, so $\rhom_R(X,M)\in \D_b(R)$, hence by \cite[(A.5.6)(ii)]{ch}, $\rhom_R(X,M)\lotimes_RY\in \D_b(R)$. Thus, $\rhom_R(X,M\otimes_R Y)\in \D_b(R)$ i.e. $\Ext^{\gg 0}_R(X,M\otimes_R Y)=0$. 
	
	(3): By (2), $\Ext^{\gg 0}_R(X,M)=0$  implies $\Ext^{\gg 0}_R(X,M\otimes_R S)=0$. 
	Since $M\otimes_R S$ is an $S$-module and $\rfd_R X \le 0$, using $(1)$ we have $\Ext^{n}_R(X,M\otimes_R S)\cong \Ext^{n}_S(X\otimes_R S,M\otimes_R S)$ for all $n\ge 0$.
	Thus $\Ext^{\gg 0}_R(X,M\otimes_R S)=0$ implies $\Ext^{\gg 0}_S(X\otimes_R S,M\otimes_R S)=0$.
	
	(4): By \cite[(A.4.20)]{ch} we have $(X\lotimes_R M)\lotimes_R Y\cong X\lotimes_R(M\lotimes_R Y)$. Since $\Tor_{\gg 0}^R(X,M)=0$, we have $X\lotimes_R M\in \D_b(R)$ and hence $(X\lotimes_R M)\lotimes_R Y\in \D_b(R)$ by \cite[(A.5.6)(ii)]{ch} (as $\fd_R Y<\infty$). Therefore $X\lotimes_R(M\lotimes_R Y) \in \D_b(R)$. Now $\rfd_R M \le 0$ implies $\Tor^R_{>0}(M,Y)=0$. Thus $M\lotimes_R Y\cong M\otimes_R Y$ and hence $X\lotimes_R(M\otimes_R Y) \in \D_b(R)$ i.e. $\Tor_{\gg 0}^R(X,M\otimes_R Y)=0$.
	
	(5): This is similar to (3); this follows by combining (4) and (1). 
\end{proof}

\begin{lem}\label{4} Let $(R,\m)\to (S,\n)$ be a homomorphism of local rings (i.e. $\m S\subseteq \n$). Let $M$ be a finitely generated $R$-module such that $\Tor^R_{>0}(M,S)=0$ (for example, $\fd_R S<\infty$ and $\rfd_R M\le 0$). If $(F_{\bullet},\partial_{\bullet})$ is a minimal $R$-free resolution of $M$, then $(F_{\bullet}\otimes_R S,\partial_{\bullet}\otimes S)$ is a minimal $S$-free resolution of $M\otimes_R S$.  
\end{lem}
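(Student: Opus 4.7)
The plan is to verify two things: exactness of $F_\bullet\otimes_R S \to M\otimes_R S \to 0$, and minimality of this complex as an $S$-free resolution.

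First I would use the hypothesis $\Tor^R_{>0}(M,S)=0$. By definition of Tor, this vanishing says precisely that applying $-\otimes_R S$ to the $R$-free resolution $F_\bullet \to M \to 0$ yields an exact complex $F_\bullet \otimes_R S \to M\otimes_R S \to 0$ of $S$-modules. Since each $F_i$ is a finitely generated free $R$-module, each $F_i \otimes_R S$ is a finitely generated free $S$-module, so this is an $S$-free resolution of $M\otimes_R S$. (The parenthetical assumption $\fd_R S <\infty$ and $\rfd_R M \le 0$ implies $\Tor^R_{>0}(M,S)=0$ by the very definition of $\rfd_R$ recalled before Lemma~\ref{1}.)

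For minimality, recall that an $R$-free resolution $(F_\bullet,\partial_\bullet)$ with each $F_i$ of finite rank is minimal precisely when all entries of the matrices representing $\partial_i$ (in some bases) lie in $\m$; equivalently, $\partial_i(F_i)\subseteq \m F_{i-1}$ for all $i\ge 1$. Applying $-\otimes_R S$, the differential $\partial_i\otimes S$ satisfies
\[
(\partial_i\otimes S)(F_i\otimes_R S)\subseteq (\m F_{i-1})\otimes_R S = \m(F_{i-1}\otimes_R S) \subseteq \n(F_{i-1}\otimes_R S),
\]
since $\m S\subseteq \n$ by the hypothesis that $R\to S$ is a local ring homomorphism. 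Thus the matrices of $\partial_i\otimes S$ have all entries in $\n$, which is the minimality criterion for the $S$-free resolution $F_\bullet \otimes_R S \to M\otimes_R S\to 0$.

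There is no real obstacle here: the entire argument is a direct combination of the Tor-vanishing hypothesis (to preserve exactness) and the containment $\m S\subseteq \n$ built into the definition of a local homomorphism (to preserve minimality). The lemma will be used later to relate projective dimensions and Betti numbers of $M$ over $R$ with those of $M\otimes_R S$ over $S$ in the proof of Theorem~\ref{mainsec3}.
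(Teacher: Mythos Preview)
Your proof is correct and follows essentially the same approach as the paper: both verify exactness via the $\Tor$-vanishing hypothesis and minimality via the containment $\m S\subseteq \n$ applied to the matrix entries of the differentials.
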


\begin{proof} Clearly $F_{\bullet}\otimes_R S\to M \otimes_R S \to 0$ is exact as $\Tor^R_{>0}(M,S)=0$. Hence this is an $S$-free resolution of $M\otimes_R S$.  By minimality of $(F_{\bullet},\partial_{\bullet})$, all the entries of each $\partial_n$ is in $\m$ and hence all the entries of each $\partial_n \otimes S$ is in $\m S\subseteq \n$. Thus $F_{\bullet}\otimes_R S$ is a minimal $S$-free resolution of $M\otimes_R S$. 
\end{proof}

\begin{lem}\label{5} Let $(R,\m)\to (S,\n)$ be a homomorphism of local rings. Let $M$ be a finitely generated $R$-module such that $\Tor^R_{>0}(M,S)=0$. Then $\pd_R M<\infty$ if and only if $\pd_S (M\otimes_R S)<\infty$.  
\end{lem}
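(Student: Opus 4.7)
The plan is to deduce the lemma as an essentially immediate consequence of Lemma \ref{4}, by exploiting the fact that projective dimension of a finitely generated module over a local ring is computed by the length of any minimal free resolution.

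First, I would observe that since $M$ is a finitely generated $R$-module over the local ring $(R,\m)$, it admits a minimal $R$-free resolution $(F_\bullet,\partial_\bullet)$, and that $\pd_R M$ equals the length of $F_\bullet$ (finite or infinite). Similarly, since $M\otimes_R S$ is a finitely generated $S$-module over the local ring $(S,\n)$, its projective dimension equals the length of any minimal $S$-free resolution of it. Under the hypothesis $\Tor_{>0}^R(M,S)=0$, Lemma \ref{4} says exactly that $(F_\bullet\otimes_R S,\partial_\bullet\otimes S)$ is a minimal $S$-free resolution of $M\otimes_R S$. Since the lengths of $F_\bullet$ and $F_\bullet\otimes_R S$ are identical, this yields $\pd_R M=\pd_S(M\otimes_R S)$, which in particular gives the desired equivalence of finiteness.

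The only small subtlety I would flag concerns the degenerate case $M\otimes_R S=0$: one should note that for a local homomorphism $R\to S$ and a finitely generated $R$-module $M\ne0$, we have $M\otimes_R S\ne0$. Indeed, by Nakayama $M/\m M\ne0$, and then $M\otimes_R S\otimes_S S/\n\cong (M/\m M)\otimes_{R/\m}(S/\n)$ is nonzero because the induced map $R/\m\to S/\n$ of fields is injective. So minimal resolutions on both sides are honest resolutions and the length comparison is unambiguous.

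There is essentially no obstacle here; the content is entirely packaged into Lemma \ref{4}. The lemma is being recorded so that in the proof of Theorem \ref{mainsec3} one can freely transfer finite (equivalently, infinite) projective dimension between $M$ and $M\otimes_R S$ whenever the Tor-vanishing hypothesis holds, for instance whenever $\fd_R S<\infty$ and $\rfd_R M\le0$.
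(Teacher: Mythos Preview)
Your proposal is correct and takes essentially the same approach as the paper: both arguments hinge on Lemma \ref{4} to identify $(F_\bullet\otimes_R S,\partial_\bullet\otimes S)$ as a minimal $S$-free resolution of $M\otimes_R S$, and then read off the projective dimension from the length of the minimal resolution. Your version is in fact slightly more streamlined, deducing the equality $\pd_R M=\pd_S(M\otimes_R S)$ in one stroke (and explicitly handling the degenerate case $M\otimes_R S=0$), whereas the paper treats the two implications separately and only invokes minimality for the harder direction.
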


\begin{proof} 
Since $\Tor^R_{>0}(M,S)=0$, $\pd_R M<\infty$ implies the existence of a finite free $R$-resolution $(F_{\bullet},\partial_{\bullet})$ of $M$ which induces the finite free $S$-resolution $(F_{\bullet}\otimes_R S,\partial_{\bullet}\otimes S)$  of $M\otimes_R S$. Hence $\pd_S (M\otimes_R S)<\infty$.	
	
Now conversely, suppose $\pd_S(M\otimes_R S)<\infty$. Let $(R^{\oplus b_n},\partial_n)$ be a minimal $R$-free resolution of $M$. By Lemma \ref{4}, $(S^{\oplus b_n},\partial_{n}\otimes S)$ is a minimal $S$-free resolution of $M\otimes_R S$. Since $\pd_S(M\otimes_R S)<\infty$, minimality  of the above resolution implies $S^{\oplus b_n}=0$ i.e. $b_n=0$ for some $n$. Hence $\pd_R M<\infty$. 
\end{proof}

We need a variation of Proposition \ref{3}(2) and (3) for the proof of descent of (ARC). 

\begin{prop}\label{9}  Let $S$ be an $R$-algebra and $M$ be a finitely generated $R$-module. Then the following hold:
	\begin{enumerate}[\rm(1)] 
		\item  Let $Y$ be an $R$-module such that $\fd_R Y<\infty$. Suppose $\rfd_R M \le 0$. Let $X$ be a finitely generated $R$-module. If $\Ext^{>0}_R(X,M)=0$ then $\Ext^{> 0}_R(X,M\otimes_R Y)=0$ and $\Hom_R(X,M)\otimes_RY\cong \Hom_R(X,M\otimes_R Y)$.  
		
		\item Suppose $\fd_R S<\infty$ and $\rfd_R M \le 0$. Let $X$ be a finitely generated $R$-module such that $\rfd_R X \le 0$. If $\Ext^{> 0}_R(X,M)=0$ then $\Ext^{> 0}_S(X\otimes_R S,M\otimes_R S)=0$ and $\Hom_R(X,M)\otimes_RS\cong \Hom_S(X\otimes_R S,M\otimes_R S)$.  
	\end{enumerate}
\end{prop}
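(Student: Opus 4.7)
The plan is to upgrade the proof of Proposition \ref{3}(2),(3) from ``eventual vanishing'' ($\Ext^{\gg 0}=0$) to ``vanishing in every positive degree'' ($\Ext^{>0}=0$), and to extract the natural isomorphism on $\Hom$ as a byproduct. The whole argument rests on tracking when a derived object $\rhom_R(X,M)\lotimes_R Y$ is actually concentrated in degree zero, not merely bounded.

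For part (1), I would start from the standard tensor-evaluation isomorphism
\[
\rhom_R(X,M)\lotimes_R Y \;\cong\; \rhom_R\bigl(X,\,M\lotimes_R Y\bigr)
\]
of \cite[(A.4.23)]{ch}, available because $X$ is finitely generated and $\fd_R Y<\infty$. Next I would replace the derived tensor and derived Hom by their classical versions, one factor at a time. The hypothesis $\rfd_R M\le 0$ combined with $\fd_R Y<\infty$ forces $\Tor^R_{>0}(M,Y)=0$, so $M\lotimes_R Y\simeq M\otimes_R Y$. Similarly, $\Ext^{>0}_R(X,M)=0$ means $\rhom_R(X,M)\simeq \Hom_R(X,M)$ as an object of $\mathcal{D}(R)$ concentrated in degree zero. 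Finally, Lemma \ref{1}(3) gives $\rfd_R \Hom_R(X,M)\le 0$, so by the very definition of restricted flat dimension (applied again to the finite-flat-dimension module $Y$) one has $\Tor^R_{>0}(\Hom_R(X,M),Y)=0$, and hence $\Hom_R(X,M)\lotimes_R Y\simeq \Hom_R(X,M)\otimes_R Y$ as a module concentrated in degree zero. Chaining these identifications yields
\[
\Hom_R(X,M)\otimes_R Y \;\simeq\; \rhom_R\bigl(X,\,M\otimes_R Y\bigr)
\]
in $\mathcal{D}(R)$. Reading off degree zero delivers $\Hom_R(X,M)\otimes_R Y\cong \Hom_R(X,M\otimes_R Y)$, and the fact that the right hand side is concentrated in degree zero gives $\Ext^{>0}_R(X,M\otimes_R Y)=0$.

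For part (2), I would first apply (1) with $Y=S$: this produces $\Ext^{>0}_R(X,M\otimes_R S)=0$ together with $\Hom_R(X,M)\otimes_R S\cong \Hom_R(X,M\otimes_R S)$. Then I would invoke Proposition \ref{3}(1) (with the roles of $X$ and $M$ there played by our $X$, and using the $S$-module $N=M\otimes_R S$): since $\rfd_R X\le 0$ and $\fd_R S<\infty$ imply $\Tor^R_{>0}(X,S)=0$, the base-change isomorphism $\Ext^n_R(X,N)\cong \Ext^n_S(X\otimes_R S,N)$ holds for all $n\ge 0$ and any $S$-module $N$. Taking $n\ge 1$ gives the $\Ext^{>0}_S$ vanishing, and taking $n=0$ gives $\Hom_R(X,M\otimes_R S)\cong \Hom_S(X\otimes_R S,M\otimes_R S)$. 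Composing with the isomorphism from (1) produces the desired $\Hom_R(X,M)\otimes_R S\cong \Hom_S(X\otimes_R S,M\otimes_R S)$.

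The only mildly delicate point is step (1): one must verify that each intermediate derived object really collapses to a module in degree zero (rather than only being bounded), which is the main difference from Proposition \ref{3}(2),(3). This is precisely what Lemma \ref{1}(3) is designed to supply, so no new technical input is needed beyond what has already been developed in the section.
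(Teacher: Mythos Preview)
Your proof is correct and follows essentially the same approach as the paper's own proof: both arguments use the tensor-evaluation isomorphism \cite[(A.4.23)]{ch}, collapse $\rhom_R(X,M)$ to $\Hom_R(X,M)$ via the Ext vanishing, collapse $M\lotimes_R Y$ to $M\otimes_R Y$ via $\rfd_R M\le 0$, and invoke Lemma~\ref{1}(3) to ensure $\Hom_R(X,M)\lotimes_R Y$ is concentrated in degree zero; part (2) is then deduced from (1) and Proposition~\ref{3}(1) in both cases. The only difference is the order in which the reductions are presented.
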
 

\begin{proof} (1): By Lemma \ref{1}(3), we have $\rfd_R \Hom_R(X,M)\le 0$. Thus $\Hom_R(X,M)\lotimes_R Y\cong \Hom_R(X,M)\otimes_R Y$. Since $\Ext^{> 0}_R(X,M)=0$, we have $\rhom_R(X,M)\cong \Hom_R(X,M)$. Therefore $\rhom_R(X,M)\lotimes_R Y\cong \Hom_R(X,M)\otimes_R Y$. By \cite[(A.4.23)]{ch}, we get $$\Hom_R(X,M)\otimes_R Y\cong \rhom_R(X,M)\lotimes_R Y\cong \rhom_R(X,M\lotimes_RY).$$ Now $\fd_RY<\infty$ and $\rfd_R M\le 0$ imply $\Tor^R_{>0}(M,Y)=0$ and hence $M\lotimes_RY\cong M\otimes_RY$. Thus $\Hom_R(X,M)\otimes_R Y \cong \rhom_R(X,M\otimes_RY)$. Now comparing homologies of both sides, we get $\Ext^{> 0}_R(X,M\otimes_R Y)=0$ and $\Hom_R(X,M)\otimes_RY\cong \Hom_R(X,M\otimes_R Y)$. 
	
	(2): This  follows by combining (1) and the isomorphism of Ext of Proposition \ref{3}(1): 
	\begin{align*}
	&\Ext^{> 0}_R(X,M)=0\underset{\text{by(1)}}{\implies}\Ext^{> 0}_R(X,M\otimes_R S)=0\\
	\underset{\text{Proposition \ref{3}(1)}}{\implies}&\Ext^{n}_S(X\otimes_R S,M\otimes_R S)\cong \Ext^{n}_R(X,M\otimes_R S)=0 \text{ for all } n>0.
	\end{align*}
	
	Moreover, $\Hom_R(X,M)\otimes_RS\cong \Hom_R(X,M\otimes_R S)\cong \Hom_S(X\otimes_R S,M\otimes_R S)$ follows by part (1) and Proposition \ref{3}(1).   
\end{proof}    

\if0
The next lemma is a variation of Lemma \ref{6}.  

\begin{lem}\label{10} Let $M$ and $N$ be finitely generated $R$-modules over a ring $R$. If $\Ext^{>0}_R(M,N\oplus R)=0$, then $\Ext^{>0}_R(\syz^n_R M,R \oplus \syz^n_R N)=0$ for all $n\ge 0$. 
\end{lem}

\begin{proof} By induction on $n$, it is enough to prove that $\Ext^{>0}_R(\syz_R M,R \oplus \syz_R N)=0$. We need to prove $\Ext^{>0}_R(\syz_R M,\syz_R N)=0$. We have an exact sequence $0\to \syz_R N \to F \to N \to 0$ for some finitely generated projective $R$-module $F$. Now $F$ is a direct summand of a finitely generated free $R$-module, therefore $\Ext^{> 0}_R(M,R)=0$ implies $\Ext^{> 0}_R(M,F)=0$. Consequently, $\Ext^i_R(M,N)\cong \Ext^{i+1}_R(M,\syz_R N)$ for all $i\ge 1$. Thus $\Ext^i_R(M,N)\cong \Ext^{i}_R(\syz_R M,\syz_R N)$ for all $i\ge 1$.  
\end{proof}  
\fi  



Now we are ready to give a proof of Theorem \ref{mainsec3}.  

\begin{proof}[Proof of Theorem \ref{mainsec3}]  

(1): Let $M$ be a finitely generated $R$-module such that $\Ext^{\gg 0}_R(M,M)=\Ext^{>0}_R(M,R)=0$. By Lemma \ref*{b2.1}, it is enough to show that $\pd_R M<\infty$.
	
Let $n=\rfd_R M$. Then by Lemma \ref{1}(2), we know $\rfd_R\syz^n_R M\le 0$. 
Since $\Ext^{\gg 0}_R(M,R\oplus M)=0$, by Lemma \ref{6}(1), we have $\Ext^{\gg 0}_R(\syz^n_RM,\syz^n_R M)=0=\Ext^{\gg 0}_R(\syz^n_RM,R)$. As $\rfd_R R=0$, by Proposition \ref{3}(3) we get $\Ext^{\gg 0}_S(\syz^n_R M\otimes_R S, \syz^n_R M\otimes_R S)=0$ and $\Ext^{\gg 0}_S(\syz^n_R M\otimes_R S,S)\cong \Ext^{\gg 0}_S(\syz^n_R M\otimes_R S, R\otimes_RS)=0$. Since $S$ satisfies (SAC) and $\syz^n_R M \otimes_R S$ is a finitely generated $S$-module, by Lemma \ref*{b2.1}, we get $\pd_S(\syz^n_R M \otimes_R S)<\infty$. Therefore by Lemma \ref{5}, we have $\pd_R \syz^n_R M<\infty$, i.e. $\pd_R M<\infty$.  

(2): Let $M$ be a finitely generated $R$-module such that $\Ext^{>0}_R(M,R\oplus M)=0$ and $n=\rfd_R M$. By Lemma \ref{6}(2), we have $\Ext^{>0}_R(\syz^n_RM,\syz^n_R M)=0=\Ext^{>0}_R(\syz^n_RM,R)$. By Lemma \ref{1}(2), we know $\rfd_R\syz^n_R M\le 0$. Since $\rfd_R R=0$, by Proposition \ref{9}(2), we get $\Ext^{>0}_S(\syz^n_R M\otimes_R S, \syz^n_R M\otimes_R S)=0$ and $\Ext^{>0}_S(\syz^n_R M\otimes_R S,S)\cong \Ext^{>0}_S(\syz^n_R M\otimes_R S, R\otimes_RS)=0$.  Now $S$ satisfies (ARC) and $\syz^n_R M \otimes_R S$ is a finitely generated $S$-module. Therefore $\pd_S(\syz^n_R M \otimes_R S)<\infty$. By Lemma \ref{5}, we then get $\pd_R \syz^n_R M<\infty$, i.e. $\pd_R M<\infty$. Since $\Ext^{>0}_R(M,R)=0$, this implies $M$ is $R$-free.  
\end{proof}

The following corollary significantly generalizes \cite[Theorem 4.5(1)]{CT}, and  \cite[Theorem 2.6 (3)$\Rightarrow$(1)]{K}. 

\begin{cor} \label{cor3.8}
	Let $R$ be a local ring and $I$ be an ideal of finite projective dimension. If $R/I$ satisfies {\rm (SAC)}/{\rm (ARC)}, then $R$ satisfies {\rm (SAC)}/{\rm(ARC)}. 
\end{cor}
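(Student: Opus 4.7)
The plan is to apply Theorem \ref{mainsec3} directly to the canonical surjection $R \to R/I$, so the only task is to verify that this map has finite flat dimension in the sense required by the theorem.

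First I would observe that $R/I$ is itself a local ring (with maximal ideal $\fkm/I$) and that the natural quotient map $\pi : R \to R/I$ is a local ring homomorphism. Next, I would use the short exact sequence
\[
0 \to I \to R \to R/I \to 0
\]
together with the hypothesis $\pd_R I < \infty$ to conclude that $\pd_R(R/I) \le \pd_R I + 1 < \infty$. Since finite projective dimension implies finite flat dimension, we get $\fd_R(R/I) < \infty$.

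With these two verifications in hand, both statements are immediate from Theorem \ref{mainsec3}: if $R/I$ satisfies (SAC), then $R$ satisfies (SAC) by part (1), and if $R/I$ satisfies (ARC), then $R$ satisfies (ARC) by part (2). There is no real obstacle, since all the work has been done in Theorem \ref{mainsec3}; this corollary is merely the specialization to the case where $S = R/I$ for an ideal $I$ of finite projective dimension, and its point is to make transparent that this strictly generalizes the earlier results \cite[Theorem 4.5(1)]{CT} and \cite[Theorem 2.6 (3)$\Rightarrow$(1)]{K}, where the corresponding hypotheses on $I$ (e.g. being generated by a regular sequence, or being a power of such an ideal) are much more restrictive than merely requiring $\pd_R I < \infty$.
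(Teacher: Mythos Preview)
Your proposal is correct and follows essentially the same approach as the paper's proof: both simply verify that $\fd_R(R/I)\le \pd_R(R/I)<\infty$ and then invoke Theorem \ref{mainsec3}. You spell out the short exact sequence argument for $\pd_R(R/I)<\infty$ a bit more explicitly than the paper does, but there is no substantive difference.
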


\begin{proof} 
 Since $R\to R/I$ is a ring homomorphism of local rings and $\fd_R R/I\le \pd_R R/I<\infty$, we get the result by Theorem \ref{mainsec3}.   
\end{proof}   

\begin{Corollary}\label{completn}
	Let $(R,\mathfrak m)$ be a  local ring. Let $\widehat{(-)}$ denote the $\fkm$-adic completion. Then, the following are equivalent: 
\begin{enumerate}[{\rm (1)}] 
\item $R$ satisfies ${\rm (SAC)}$.
\item $\widehat{R}$ satisfies ${\rm (SAC)}$.
\end{enumerate} 

\end{Corollary}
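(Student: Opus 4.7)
The plan is to prove the two directions by rather different means, relying on Theorem~\ref{mainsec3}(1) for the descent direction and on Corollaries~\ref{a0.3} and~\ref{poseries} for the ascent direction.

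For $(2)\Rightarrow(1)$, the argument is immediate: the completion map $R\to\widehat R$ is (faithfully) flat, so $\fd_R\widehat R=0<\infty$, and Theorem~\ref{mainsec3}(1) gives (SAC) for $R$.

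For $(1)\Rightarrow(2)$, the approach is more delicate, since a general finitely generated $\widehat R$-module need not be extended from an $R$-module and a naive base-change argument is therefore unavailable. My plan is to proceed by induction on $\dim R$. The base case $\dim R=0$ is trivial, for then $R$ is Artinian and equals $\widehat R$. For the inductive step with $\dim R\geq 1$ and $\depth R>0$, I would choose an $R$-regular element $x\in\fkm$; by flatness of $R\to\widehat R$ the element $x$ is also $\widehat R$-regular, and the canonical identification $\widehat R/x\widehat R\cong \widehat{R/xR}$ (with $\fkm/xR$-adic completion on the right) holds. Two applications of Corollary~\ref{a0.3} combined with the inductive hypothesis applied to the strictly lower-dimensional ring $R/xR$ yield the chain of equivalences
\[
R\ (\mathrm{SAC})\iff R/xR\ (\mathrm{SAC})\iff \widehat{R/xR}\ (\mathrm{SAC})\iff \widehat R\ (\mathrm{SAC}).
\]

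The main obstacle is the remaining subcase $\depth R=0$ with $\dim R\geq 1$, in which no $R$-regular element exists in $\fkm$ and Corollary~\ref{a0.3} cannot be invoked directly. My plan is to handle this case by routing the argument through the formal power series extension $R[[T]]$, which has positive depth since $T$ is regular on it. By Corollary~\ref{poseries} applied separately to $R$ and to $\widehat R$, the condition $(\mathrm{SAC})$ is equivalent between each ring and its formal power series ring, and one has the standard identification $\widehat{R[[T]]}\cong \widehat R[[T]]$ for the completions taken with respect to the maximal ideals of the power series rings. This reduces the depth-zero case to the positive-depth case for $R[[T]]$. The technical point requiring the most care is organizing the induction so that the power-series detour does not produce a circular dependency on the depth-zero statement being proved; this is where I expect the bulk of the work to lie, and where a different ordering of the induction (or a direct Ext-theoretic argument for the depth-zero case) may be needed.
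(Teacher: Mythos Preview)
Your argument for $(2)\Rightarrow(1)$ matches the paper's. For $(1)\Rightarrow(2)$, your positive-depth step is valid, but the depth-zero subcase is a genuine gap, and it cannot be closed by any reordering of the induction you propose. The trouble is exactly the circularity you flag: at dimension $d$ with $\depth R=0$ you pass to $R[[T]]$ of dimension $d+1$; to run your positive-depth reduction on $R[[T]]$ you must mod out a regular element, landing in dimension $d$ again (indeed, choosing $T$ lands back at $R$ itself), so you need the very statement you are proving. Conversely, in the positive-depth step at dimension $d$ you reduce to $R/xR$ of dimension $d-1$, which may well have depth zero; hence the positive-depth case at dimension $d$ already presupposes the depth-zero case at dimension $d-1$. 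The two subcases are therefore interlocked across dimensions and no induction scheme untangles them.

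The paper avoids induction entirely. The key device you are missing is a single explicit presentation of the completion: writing $\fkm=(a_1,\dots,a_n)$, one has
\[
\widehat{R}\;\cong\;R[[X_1,\dots,X_n]]/(X_1-a_1,\dots,X_n-a_n),
\]
and $X_1-a_1,\dots,X_n-a_n$ is a regular sequence on $R[[X_1,\dots,X_n]]$. Then (SAC) passes from $R$ to $R[[X_1,\dots,X_n]]$ by iterating Corollary~\ref{poseries}, and from there to $\widehat{R}$ by iterating Corollary~\ref{a0.3} (equivalently, by Corollary~\ref{mainseccor3}(1)). This handles the depth-zero case and the positive-depth case uniformly in one stroke, with no induction needed. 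If you want to salvage your outline, replace the entire depth-zero detour by this presentation; but once you have it, the inductive scaffolding becomes unnecessary.
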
	

\begin{proof}
$(2) \Rightarrow (1)$: This follows from Theorem \ref{mainsec3} as $R\to \widehat R$ is a flat extension.

$(1) \Rightarrow (2)$:  If $R$ is Artinian, then $R$ is complete and we are done. Now assume $R$ is not Artinian. Let $\m=(a_1,...,a_n)R$. Then $\widehat R \cong R[[X_1,...,X_n]]/(X_1-a_1,...,X_n-a_n)$ by \cite[Theorem 8.12]{Mat}. Now  $X_1-a_1,...,X_n-a_n$ is an $R[[X_1,...,X_n]]$-regular sequence by \cite[Lemma 5.7]{dlr}. Hence the claim follows by Corollary \ref{mainseccor3}.   
\end{proof}  

\begin{Corollary}\label{a0.4}
	All complete intersections (not necessarily local) satisfy {\rm (SAC)}.
\end{Corollary}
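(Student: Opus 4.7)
The plan is to reduce the statement, via successive applications of the descent machinery from Sections~\ref{section2} and~\ref{section3}, to the observation that a regular local ring satisfies (SAC) trivially.

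First, I would handle the non-local case by localization. A complete intersection ring $R$, by the standard convention, has $R_\fkm$ a local complete intersection for every maximal ideal $\fkm$. If $M$ is a finitely generated $R$-module satisfying the (SAC) hypotheses $\Ext_R^{>0}(M,R) = 0 = \Ext_R^{\gg 0}(M,M)$, these vanishings persist after localization at any $\fkm$. Granting (SAC) in the local case yields $\Ext_{R_\fkm}^{>0}(M_\fkm, M_\fkm) = 0$ for every $\fkm$, and since Ext with a finitely generated first argument commutes with localization, $\Ext_R^{>0}(M,M) = 0$, as required.

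Next, for a local complete intersection $(R,\fkm)$, I would apply Corollary~\ref{completn} to reduce to (SAC) for $\widehat{R}$, and then invoke the Cohen structure theorem to write $\widehat{R} = Q/(f_1,\ldots,f_c)$ with $Q$ a regular local ring and $f_1,\ldots,f_c$ a $Q$-regular sequence. At this stage Corollary~\ref{mainseccor3}(1) applies with $R = S = \widehat{R}$, the identity map (which is trivially of finite flat dimension), and the surjection $Q \twoheadrightarrow \widehat{R}$ as the required presentation by a regular sequence, reducing the problem to verifying (SAC) for the regular local ring $Q$.

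Finally, (SAC) for a regular local ring $Q$ is immediate from Lemma~\ref{b2.1}: every finitely generated $Q$-module has finite projective dimension, so (GARC) holds vacuously. I do not anticipate any essential obstacle; the proof is a clean synthesis of previously established results, and the only point worth checking carefully is that the global definition of ``complete intersection'' one adopts validates the localization reduction in the first step.
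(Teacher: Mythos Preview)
Your proof is correct and follows essentially the same route as the paper: reduce to the local case by localization, pass to the completion, present $\widehat R$ as a regular local ring modulo a regular sequence, and invoke Corollary~\ref{mainseccor3}(1) together with the trivial observation that regular local rings satisfy (SAC). The only cosmetic difference is that the paper descends from $\widehat R$ to $R$ via Theorem~\ref{mainsec3} directly rather than citing Corollary~\ref{completn}, but this is the same argument.
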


\begin{proof} 
Let $R$ be a complete intersection. We may assume that $R$ is local. We have $\widehat R\cong Q/(f_1,\ldots,f_c)$ for some regular local ring $Q$ of positive dimension and regular sequence $f_1,\ldots,f_c$. Since $Q$ satisfies (SAC), we have $Q/(f_1,\ldots,f_c)$ satisfies (SAC) by Corollary \ref{mainseccor3}, and hence $\widehat R$ satisfies (SAC). Now  by Theorem \ref{mainsec3} $R$ satisfies (SAC).    
\end{proof} 

\begin{cor}\label{localiz} Let $(R,\m)$ be a local ring. Then the following are equivalent:
\begin{enumerate}[{\rm (1)}] 
\item $R$ satisfies ${\rm (SAC)}$.
\item $R[X]_{(\m,X)}$  satisfies ${\rm (SAC)}$.
\end{enumerate} 
\end{cor}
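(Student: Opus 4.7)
The plan is to leverage the two previously established corollaries \ref{completn} (completion preserves (SAC)) and \ref{poseries} (one formal variable preserves (SAC)) by identifying the completion of $S := R[X]_{(\m,X)}$ with the power series ring $\widehat{R}[[X]]$, where $\widehat{(-)}$ is the $\m$-adic completion on $R$-side.

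The one nontrivial ring-theoretic input is the isomorphism $\widehat{S} \cong \widehat{R}[[X]]$, where $\widehat{S}$ denotes the completion of $S$ at its maximal ideal $\fkn := (\m,X)S$. To check this, I would observe that $(\m,X)^k R[X]$ is $(\m,X)$-primary, so localization at $(\m,X)$ does not change the quotient, giving
\[
S/\fkn^k \;\cong\; R[X]/(\m,X)^k \;\cong\; \bigoplus_{j=0}^{k-1} (R/\m^{k-j})\,X^j,
\]
and taking the inverse limit over $k$ collapses this into $\widehat{R}[[X]]$.

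With the completion isomorphism in hand, the corollary follows from the chain of equivalences: $R$ satisfies (SAC) iff $\widehat{R}$ does (Corollary \ref{completn}); iff $\widehat{R}[[X]]$ does (Corollary \ref{poseries}); iff $\widehat{S}$ does (by the isomorphism above); iff $S$ does (Corollary \ref{completn} again). The main — and indeed only substantive — step is the completion identification, which is a standard computation; the remainder is pure bookkeeping of the earlier corollaries. I note in passing that the implication (2)$\Rightarrow$(1) is also a direct consequence of Theorem \ref{mainsec3} applied to the (faithfully) flat extension $R \to S$, but the completion-based argument treats both directions symmetrically.
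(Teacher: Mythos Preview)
Your proposal is correct and follows essentially the same approach as the paper: both rely on the identification $\widehat{R[X]_{(\m,X)}}\cong \widehat R[[X]]$ together with Corollaries \ref{completn} and \ref{poseries}. The only cosmetic difference is that the paper handles $(2)\Rightarrow(1)$ directly via Theorem \ref{mainsec3} (the flat-map descent you mention in passing), while you run the completion chain symmetrically for both directions.
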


\begin{proof} Since $R\to R[X]_{(\m,X)}$ is a flat map of local rings, $(2) \implies (1)$ follows by Theorem \ref{mainsec3}.  

For $(1)\implies (2)$: If $R$  satisfies (SAC) then $\widehat R$ satisfies (SAC) by Corollary \ref{completn} and hence $\widehat R[[X]]$ satisfies (SAC) by Corollary \ref{poseries}.  Therefore $\widehat {R[X]_{(\m,X)}}\cong \widehat R[[X]]$ satisfies (SAC). Hence $R[X]_{(\m,X)}$ satisfies (SAC) by Corollary \ref{completn}.  
\end{proof}

In the light of an application of Corollaries \ref{a0.3} and \ref{completn}, we note the following question.

\begin{quest}\label{q29}
Let $d\ge 2$ be an integer. 
\begin{enumerate}[\rm(1)]
\item Do all Noetherian (or Cohen-Macaulay or Gorenstein) local factorial domains of depth $d$ satisfy (SAC)?
\item Do all Noetherian (or Cohen-Macaulay or Gorenstein) local rings of depth $d$ having isolated singularity satisfy (SAC)?
 \end{enumerate}
\end{quest}

\begin{rem}\label{rem211}
If either of Question \ref{q29} has an affirmative answer for some $d$, then we can show that all Noetherian (or Cohen-Macaulay or Gorenstein)  local rings containing a field satisfy (SAC). 

Indeed, let $d$ be the integer in Question \ref{q29}. Let $R$ be a Noetherian local ring containing a field. Then, so is $R/Q$ where $Q$ is an ideal generated by a maximal regular sequence. 
We consider the completion of $(R/Q)[[x_1, \dots, x_d]]$. Then, the completion has depth $d$. By Heitmann's theorems (\cite[Theorem 8]{heit}, \cite[Main Theorem]{heit2}), there exists a Noetherian local factorial domain $S$ (or a ring having isolated singularity) such that $\widehat{S}$ becomes the completion of $(R/Q)[[x_1, \dots, x_d]]$. Hence, if Question \ref{q29} has an affirmative answer, $S$ satisfies (SAC). By Corollary \ref{completn}, $\widehat{S}$ satisfies (SAC). It follows that so does $(R/Q)[[x_1, \dots, x_d]]$ by Corollary \ref{completn}, and so does $R$ by Corollaries \ref{a0.3} and \ref{poseries}. 

In addition, we first assume that $R$ is Cohen-Macaulay or Gorenstein, then so are all rings appearing in the above argument. 
\end{rem}

\begin{rem}
We don't know if the above argument works for (ARC) since we don't know if (ARC) of $R$ is inherited by that of $R/xR$, where $x$ is a non-zerodivisor, without assuming the completeness of $R$ (see, for example, \cite[Proposition 2.2]{K}).

On the other hand, very recently, it is proved that all Cohen-Macaulay local factorial domains satisfy (ARC) (\cite{KOT}). 
\end{rem}

\section{Transfer of (SAC) along $R \to R/I^\ell$} \label{section4}  

Let $(R, \fkm) \to (S, \fkn)$ be a local homomorphism of local rings. In the previous section, we prove that (SAC) for $S$ implies (SAC) for $R$ if the flat dimension of $S$ over $R$ is finite. Here we explore other special cases by using the notion of Ulrich ideals. See Definition \ref{Ulrich} to recall the definition of Ulrich ideals. We note that all parameter ideals are Ulrich ideals. 

\begin{fact} {\rm (\cite[Lemma 2.4]{K})}\label{c2.7}
Let $(R, \fkm)$ be a  local ring and $I$ an $\fkm$-primary ideal of $R$. Then we have the following.
\begin{enumerate}[{\rm (1)}] 
\item  Suppose that $R/I$ is a Gorenstein ring. If $\Ext_R^{>0} (M, R/I)=0$, then $\Tor_{>0}^R (M, R/I)=0$.
\item Suppose that $N$ is a finitely generated $R/I$-module. If $\Tor_{>0}^R (M, R/I)=0$, then $\Ext_R^{i} (M, N)\cong\Ext_{R/I}^{i} (M/IM, N)$ for all $i\in \mathbb{Z}$.
\end{enumerate}
\end{fact}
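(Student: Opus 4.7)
The plan is to derive both parts from the standard Hom-tensor adjunction along the surjection $R \to R/I$, combined with self-injectivity of $R/I$ for part (1). The key preparatory observation is that for any $R$-module $F$ and any $R/I$-module $N$ there is a natural isomorphism
\[
\Hom_R(F, N) \cong \Hom_{R/I}(F \otimes_R R/I, N),
\]
coming from $N \cong \Hom_{R/I}(R/I, N)$ and ordinary Hom-tensor adjunction.

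For part (2) the argument is essentially formal. Fix a free resolution $F_\bullet \to M \to 0$ over $R$. The vanishing $\Tor_{>0}^R(M, R/I)=0$ says that $F_\bullet \otimes_R R/I \to M/IM \to 0$ is a free $R/I$-resolution of $M/IM$. Applying the adjunction above termwise gives an isomorphism of cocomplexes $\Hom_R(F_\bullet, N) \cong \Hom_{R/I}(F_\bullet \otimes_R R/I, N)$, and taking cohomology yields the desired isomorphism $\Ext_R^i(M, N) \cong \Ext_{R/I}^i(M/IM, N)$ for every $i$.

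For part (1) the extra input is that $R/I$ is zero-dimensional Gorenstein local (as $I$ is $\fkm$-primary and $R/I$ is Gorenstein), hence self-injective, with $R/I$ being the injective hull of its residue field as an $R/I$-module. In particular $(-)^{\vee} := \Hom_{R/I}(-, R/I)$ is an exact functor on $R/I$-modules and is faithful on finitely generated ones. Again take a free resolution $F_\bullet \to M$. By the adjunction, $\Hom_R(F_\bullet, R/I) \cong (F_\bullet \otimes_R R/I)^{\vee}$; exactness of $(-)^{\vee}$ allows one to commute it past homology, yielding
\[
\Ext_R^i(M, R/I) \cong \bigl( \Tor_i^R(M, R/I) \bigr)^{\vee}
\]
for all $i \ge 0$. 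The hypothesis $\Ext_R^{>0}(M, R/I)=0$ then forces $\bigl(\Tor_i^R(M, R/I)\bigr)^{\vee}=0$ for $i>0$, and because $\Tor_i^R(M, R/I)$ is finitely generated over the Artinian ring $R/I$, faithfulness of Matlis duality gives $\Tor_i^R(M, R/I) = 0$, as claimed.

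I do not anticipate a serious obstacle; the proof relies only on the standard adjunction along $R \to R/I$ and on the classical fact that a zero-dimensional Gorenstein local ring is self-injective. The one point worth recording carefully is the faithfulness of $\Hom_{R/I}(-, R/I)$ on finitely generated $R/I$-modules, which is precisely where the Gorenstein assumption enters, via $R/I$ being its own injective envelope.
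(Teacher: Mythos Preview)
Your proof is correct. The paper does not supply its own proof of this statement---it is recorded as a Fact with a citation to \cite[Lemma 2.4]{K}---so there is no in-paper argument to compare against directly. That said, your approach to part~(2) is exactly the one the paper uses elsewhere (Proposition~\ref{3}(1), attributed to \cite[Lemma 2, p.~140]{Mat}): pass a free resolution across $R\to R/I$ using $\Tor$-vanishing and apply Hom--tensor adjunction. Your argument for part~(1) via the Ext--Tor duality $\Ext_R^i(M,R/I)\cong\bigl(\Tor_i^R(M,R/I)\bigr)^{\vee}$ over the Artinian Gorenstein ring $R/I$ is the standard route and uses the hypotheses in the expected way; the faithfulness of $\Hom_{R/I}(-,R/I)$ on finitely generated $R/I$-modules is indeed the point where the Gorenstein (equivalently, self-injective) assumption is essential, and the finite generation of $\Tor_i^R(M,R/I)$ is guaranteed by the paper's standing convention that modules in this section are finitely generated.
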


The following lemma plays an important role in the proof of Theorem \ref{ul}. The idea of this proof is similar to \cite[Theorem 2.6 (1) $\Rightarrow$ (4)]{K}, but we include a proof since some details are different. 

\begin{lem}\label{estimate} Let $(R,\m)$ be a  local ring and $n\ge 2$ an integer. Let $Q_n\subseteq Q_{n-1}\subseteq \cdots \subseteq Q_1$ be a sequence of $\m$-primary ideals. For each $n\ge i\ge 2$, we assume that
\begin{center}
$Q_{i-1}/Q_i$ is a free $R/Q_1$-module of rank $a_i$ \quad and \quad $\frac{(1+\sum_{i=2}^{\ell-1}a_i)}{a_\ell}<1$ for all $2\le \ell \le n$. 
\end{center}
Then, if $R/Q_1$ is Gorenstein and satisfies {\rm (SAC)}, $R/Q_\ell$ also satisfies {\rm (SAC)} for all $2\le \ell\le n$.   
\end{lem}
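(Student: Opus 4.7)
Fix $\ell$ with $2\le\ell\le n$ and set $T=R/Q_\ell$, $S=R/Q_1$, $K=Q_1/Q_\ell$. Then $T$ is a local Artinian ring, $K$ is $(\fkm/Q_\ell)$-primary, and $T/K=S$ is Gorenstein by hypothesis. As a $T$-module, $K$ carries the natural filtration $0=L_0\subsetneq L_1\subsetneq\cdots\subsetneq L_{\ell-1}=K$, where $L_k:=Q_{\ell-k}/Q_\ell$; its subquotients are $L_k/L_{k-1}\cong Q_{\ell-k}/Q_{\ell-k+1}\cong S^{a_{\ell-k+1}}$, which are in particular free $S$-modules. Let $M$ be a finitely generated $T$-module with $\Ext_T^{>0}(M,T)=0$ and $\Ext_T^{\gg0}(M,M)=0$. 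The plan is to show $\Ext_T^{>0}(M,M)=0$ by transferring the problem, via the filtration of $K$, to an instance of (SAC) over $S$.

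\textbf{Step 1 (killing $\Ext_T^{>0}(M,S)$).} Set $\epsilon^i:=\ell_R(\Ext_T^i(M,S))$, which is a finite non-negative integer since $T$ is Artinian. The short exact sequence $0\to K\to T\to S\to 0$ combined with $\Ext_T^{>0}(M,T)=0$ yields $\Ext_T^i(M,S)\cong\Ext_T^{i+1}(M,K)$ for every $i\ge 1$. Chasing the length inequalities attached to the long exact sequences of $0\to L_{k-1}\to L_k\to S^{a_{\ell-k+1}}\to 0$ iteratively from $k=\ell-1$ down to $k=2$, I expect to obtain
\[
a_\ell\,\epsilon^{i+1}\;=\;\ell_R(\Ext_T^{i+1}(M,L_1))\;\le\;\sum_{k=2}^{\ell-1}a_{\ell-k+1}\,\epsilon^i\,+\,\ell_R(\Ext_T^{i+1}(M,K))\;=\;\Bigl(1+\sum_{j=2}^{\ell-1}a_j\Bigr)\epsilon^i
\]
for all $i\ge 1$, using the identification $\ell_R(\Ext_T^{i+1}(M,K))=\epsilon^i$ at the top end. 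The hypothesis $(1+\sum_{j=2}^{\ell-1}a_j)/a_\ell<1$ then gives $\epsilon^{i+1}\le r\,\epsilon^i$ with a fixed $r<1$, which, because the $\epsilon^i$ are non-negative integers, forces $\epsilon^i=0$ for all $i\gg 0$. The opposite length bound $\epsilon^i\le(\sum_{j=2}^\ell a_j)\epsilon^{i+1}$, extracted analogously from the same filtration, then powers a backward induction that propagates the vanishing down to every $i\ge 1$, so $\Ext_T^{>0}(M,S)=0$.

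\textbf{Step 2 (change of rings and conclusion via (SAC) for $S$).} Fact~\ref{c2.7}(1) applied to $T$ with ideal $K$ gives $\Tor_{>0}^T(M,S)=0$, and pushing this through the filtration of $K$ yields $\Tor_{>0}^T(M,K)=0$. Tensoring $0\to K\to T\to S\to 0$ with $M$ therefore produces the short exact sequence $0\to M\otimes_T K\to M\to \bar M\to 0$ (with $\bar M:=M\otimes_T S$), and $M\otimes_T K$ inherits a filtration with subquotients $\bar M^{a_{\ell-k+1}}$. By Fact~\ref{c2.7}(2), $\Ext_T^i(M,N)\cong\Ext_S^i(\bar M,N)$ for every $S$-module $N$ and every $i\ge 0$; in particular $\Ext_S^{>0}(\bar M,S)=0$ by Step 1. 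Applying $\Hom_T(M,-)$ to the displayed sequence and combining the same filtration-based length-chase with the hypothesis $\Ext_T^{\gg 0}(M,M)=0$ should give the parallel contraction
\[
a_\ell\,\ell_R(\Ext_T^{i+1}(M,\bar M))\;\le\;\Bigl(1+\sum_{j=2}^{\ell-1}a_j\Bigr)\ell_R(\Ext_T^i(M,\bar M))\qquad(i\gg 0),
\]
hence $\Ext_T^{\gg 0}(M,\bar M)=0$, and so $\Ext_S^{\gg 0}(\bar M,\bar M)=0$ by Fact~\ref{c2.7}(2). Now (SAC) for $S$ forces $\Ext_S^{>0}(\bar M,\bar M)=0$, equivalently $\Ext_T^{>0}(M,\bar M)=0$. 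Propagating this vanishing through the filtration of $M\otimes_T K$ gives $\Ext_T^{>0}(M,M\otimes_T K)=0$, and then the long exact sequence of $\Hom_T(M,-)$ applied to $0\to M\otimes_T K\to M\to\bar M\to 0$ yields $\Ext_T^{>0}(M,M)=0$, as desired.

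\textbf{Main difficulty.} The technical heart of the argument is the pair of contractive length inequalities, whose contraction factor matches exactly the quantity $(1+\sum_{j=2}^{\ell-1}a_j)/a_\ell$ in the hypothesis. Establishing them requires a careful (but ultimately mechanical) chase of LES's along the entire filtration, combining the ``small end'' contribution $a_\ell\epsilon^{i+1}$ coming from $L_1=S^{a_\ell}$ with the ``top end'' identification $\Ext_T^{i+1}(M,K)\cong\Ext_T^i(M,S)$ responsible for the extra $+1$ in the numerator. Once these inequalities are in hand, the remainder of the proof is standard homological bookkeeping together with a single application of (SAC) for $S$; no induction on $\ell$ is needed.
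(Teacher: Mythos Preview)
Your proof is correct and follows essentially the same approach as the paper's: both establish the contractive length inequality $a_\ell E_{j+1}\le (1+\sum_{i=2}^{\ell-1}a_i)E_j$ via the filtration by the $Q_i$'s, deduce $\Ext_{T}^{>0}(M,S)=0$, then use Fact~\ref{c2.7} to transfer to $S$, apply (SAC) for $S$, and propagate back. The only cosmetic difference is that you organize the computation through the submodule filtration $L_k=Q_{\ell-k}/Q_\ell$ of $K$, whereas the paper uses the tower of quotient rings $R_i=R/Q_i$; the resulting inequalities and the logical flow are identical.
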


\begin{proof}
Set $R_i=R/Q_i$ for $i>0$. We then have the exact sequences
\begin{align}\label{xy1}
0\to Q_{i-1}/Q_{i}\to R_i\to R_{i-1}\to 0 
\end{align}
of $R$-modules for all $2\le i \le \ell$. By assumption, $Q_{i-1}/Q_{i}\cong R_1^{\oplus a_i}$ for all $ 2\le i\le n$.   

Let $2\le \ell \le n$. Suppose that $M$ is a finitely generated $R_\ell$-module such that $\Ext_{R_\ell}^{>0}(M, R_\ell)=\Ext_{R_\ell}^{\gg0}(M, M)=0$. We aim to show that $\Ext^{>0}_{R_\ell}(M,M)=0$. By applying the functor $\Hom_{R_\ell} (M, -)$ to (\ref{xy1}), we get a long exact sequences and isomorphisms
\begin{align}\label{xy2}
\begin{split}
\cdots \to &\Ext_{R_\ell}^{j} (M, R_1)^{\oplus a_i} \to \Ext_{R_\ell}^{j} (M, R_i) \to \Ext_{R_\ell}^{j} (M, R_{i-1})\\
 \to & \Ext_{R_\ell}^{j+1} (M, R_1)^{\oplus a_i} \to \cdots\\ 
 \text{and}\\
 &\Ext_{R_\ell}^{j} (M, R_{\ell-1})\cong \Ext_{R_\ell}^{j+1} (M, R_1)^{\oplus a_\ell} 
\end{split}
\end{align}
of $R_\ell$-modules for all $2\le i\le \ell-1$ and $j>0$.
Set $E_j=\ell_{R_\ell} (\Ext_{R_\ell}^{j} (M, R_1))$ for $j>0$. By (\ref{xy2}), 
\[
\begin{split}
E_{j+1}{\cdot}a_\ell&=\ell_{R_\ell} (\Ext_{R_\ell}^{j} (M, R_{\ell-1}))\\
&\le E_j{\cdot}a_{\ell-1} + \ell_{R_\ell} (\Ext_{R_\ell}^{j} (M, R_{\ell-2}))\\
&\le E_j{\cdot}\left( a_{\ell-1} + a_{\ell-2}\right) + \ell_{R_\ell} (\Ext_{R_\ell}^{j} (M, R_{\ell-3})) \le \cdots \\
& \le E_j{\cdot}\left(1+\sum_{i=2}^{\ell-1}a_i\right),
\end{split}
\]
where the first equality follows from the isomorphism of \eqref{xy2} and the first inequality follows from the long exact sequence of \eqref{xy2} for $i=\ell-1$ and the fact that for $X\xrightarrow{f}Y\xrightarrow{g}Z$, $\ell_R(Y)\le \ell_R(X)+\ell_R(Z)$. The other inequalities follow in the same way as the first inequality by using the long exact sequence of \eqref{xy2} for $i=\ell-2, \dots, 2$ recursively.

Therefore, we obtain that
\[
E_{j+1}\le \frac{(1+\sum_{i=2}^{\ell-1}a_i)}{a_\ell}{\cdot}{E_j}
\]
for all $j>0$. It follows that $E_{m+1}\le \left(\frac{(1+\sum_{i=2}^{\ell-1}a_i)}{a_\ell}\right)^m E_1$ for $m\gg 0$. Since $\frac{(1+\sum_{i=2}^{\ell-1}a_i)}{a_\ell}<1$ by assumption, we obtain that $E_{m+1}=0$, that is, $\Ext_{R_\ell}^{m+1}(M, R_1)=0$ for all $m\gg 0$.  

On the other hand, assume that $\Ext_{R_\ell}^{j+1}(M, R_1)=0$ for some $j>0$. By the isomorphism of (\ref{xy2}), we get $\Ext_{R_\ell}^{j} (M, R_{\ell-1})=0$. It follows that $\Ext_{R_\ell}^{j} (M, R_{\ell-2})=0$ by the long exact sequence of (\ref{xy2}) where $i=\ell-1$. By looking at the long exact sequence of (\ref{xy2}), where $i=\ell-2, \dots, 2$ recursively,  we finally obtain $\Ext_{R_\ell}^{j} (M, R_{1})=0$. Thus $\Ext_{R_\ell}^{j+1}(M, R_1)=0$ implies that $\Ext_{R_\ell}^{j}(M, R_1)=0$ for all $j>0$. Hence $\Ext_{R_\ell}^{m+1}(M, R_1)=0$ for all $m\gg 0$ concludes that $\Ext_{R_\ell}^{>0}(M, R_1)=0$.
We have $\Tor_{>0}^{R_\ell} (M, R_1)=0$ by Fact \ref{c2.7} (1). It follows that 
\begin{align}\label{xy3}
\Tor_{>0}^{R_\ell} (M, R_i)=0 \ \ \text{ for all $1\le i \le \ell-1$}
\end{align}
by applying the functor $M\otimes_{R_\ell} -$ to the exact sequence (\ref{xy1}). We furthermore have 
\begin{align}\label{xy4}
0\to (M/Q_1M)^{\oplus a_i} \to M/Q_i M \to M/Q_{i-1} M \to 0 
\end{align}
of $R_\ell$-modules for all $2\le i \le \ell$ by (\ref{xy1}) and (\ref{xy3}).  
Therefore, by applying the functor $\Hom_{R_\ell} (M, -)$ to (\ref{xy4}), we get 
\begin{align}\label{xy4.5}
\begin{split}
\cdots \to& \Ext_{R_\ell}^{j} (M, M/Q_1M)^{\oplus a_i} \to \Ext_{R_\ell}^{j} (M, M/Q_{i}M) \to \Ext_{R_\ell}^{j} (M, M/Q_{i-1}M) \\
\to& \Ext_{R_\ell}^{j+1} (M, M/Q_1M)^{\oplus a_i} \to \cdots\\ 
&\text{and}\\
&\Ext_{R_\ell}^{j} (M, M/Q_{\ell-1}M)\cong \Ext_{R_\ell}^{j+1} (M, M/Q_1M)^{\oplus a_\ell} 
\end{split}
\end{align}
of $R_\ell$-modules for all $2\le i\le \ell-1$ and $j\gg 0$. 
Set $E'_j=\ell_{R_\ell} (\Ext_{R_\ell}^{j} (M, M/Q_1M))$ for $j>0$. Then by the same calculation as above, we obtain that $\Ext_{R_\ell}^{\gg0}(M, M/Q_1M)=0$. It follows that $\Ext_{R_1}^{\gg 0}(M/Q_1M, M/Q_1M)=0$ by (\ref{xy3}) and Fact \ref{c2.7} (2) (here our ring is $R_\ell=R/Q_\ell$ and the ideal is $Q_1/Q_\ell$). Since $R_1$ is an Artinian Gorenstein ring by assumption, we have $\Ext_{R_1}^{>0}(M/Q_1M, R_1)=0$. Therefore, by the assumption that $R_1$ satisfies {\rm (SAC)}, we obtain that $\Ext_{R_1}^{>0}(M/Q_1M, M/Q_1M)=0$.
By Fact \ref{c2.7} (2), it follows that $\Ext_{R_\ell}^{>0}(M, M/Q_1M)=0$ . Applying the functor $\Hom_{R_\ell}(M, -)$ to (\ref{xy4}) for each $2\le i\le \ell$, we get that $\Ext^{>0}_{R_\ell}(M,M/Q_iM)=0$ for each $2\le i\le \ell$. In particular,  $\Ext^{>0}_{R_\ell}(M,M)=0$ since $M/Q_\ell M=M$ (as $M$ is an $R/Q_\ell$-module). 
\end{proof} 

We can apply Lemma \ref{estimate} to Ulrich ideals.

\begin{lem}\label{ulrank} 
Let $(R,\m)$ be a Cohen-Macaulay local ring of dimension $n$ and $I$ an Ulrich ideal of $R$. If $\mu(I)=c$, then $I^{i}/I^{i+1}\cong (R/I)^{\oplus \left[ \binom{i+n-1}{n-1}+(c-n)\binom{i-1+n-1}{n-1}\right]}$ for every $i\ge 1$.
\end{lem}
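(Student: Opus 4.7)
The plan is to prove Lemma \ref{ulrank} by induction on $n = \dim R$, combined with the Valabrega--Valla machinery applied to $G := \mathrm{gr}_I(R)$. First I would record two setup facts: iterating $I^2 = QI$ gives $I^j = Q^{j-1} I$ for all $j \geq 1$, so $I \cdot I^j = I^{j+1}$ and $I^i/I^{i+1}$ is naturally an $R/I$-module; and since $I$ is $\m$-primary we have $I^2 \neq 0$, so from $QI = I^2$ and Nakayama it follows that $Q \not\subseteq \m I$ (otherwise $I^2 \subseteq \m I^2$ forces $I^2 = 0$). For the base case $n = 1$, write $Q = (a)$ with $a$ a non-zerodivisor; then $I^i = a^{i-1} I$, and multiplication by $a^{i-1}$ is injective and hence induces an isomorphism $I/I^2 \xrightarrow{\sim} I^i/I^{i+1}$, so $I^i/I^{i+1} \cong (R/I)^{\oplus c}$, matching the formula $\binom{i}{0}+(c-1)\binom{i-1}{0}=c$.

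For the inductive step ($n \geq 2$), I would pick $a = a_1 \in Q \setminus \m I$; since $\m Q \subseteq \m I$, such an $a$ lies outside $\m Q$ and thus belongs to a minimal generating set of $Q$. Being part of a parameter ideal of a Cohen--Macaulay ring, $a$ is a non-zerodivisor, $\bar R := R/aR$ is Cohen--Macaulay of dimension $n-1$, and a direct verification shows that $\bar I := I/aR$ is an Ulrich ideal of $\bar R$ with reduction $\bar Q = (\bar a_2, \ldots, \bar a_n)$ and $\mu(\bar I) = c-1$. The essential structural input is that for an Ulrich ideal the associated graded ring $G = \mathrm{gr}_I(R)$ is Cohen--Macaulay, with $a_1^*, \ldots, a_n^*$ a regular sequence in $G$ (as established in \cite{GOTWY}); in particular $a^* \in G_1$ is a non-zerodivisor on $G$, so by the Valabrega--Valla criterion $aR \cap I^j = a I^{j-1}$ for every $j \geq 1$. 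Together with the fact that $a$ is a non-zerodivisor on $R$, this yields the short exact sequence of $R/I$-modules
\[
0 \longrightarrow I^{j-1}/I^j \xrightarrow{\;\cdot a\;} I^j/I^{j+1} \longrightarrow \bar I^j/\bar I^{j+1} \longrightarrow 0
\]
for every $j \geq 1$.

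By the outer inductive hypothesis on $n$, $\bar I^j/\bar I^{j+1}$ is a free $\bar R/\bar I = R/I$-module of rank $\bar k_j := \binom{j+n-2}{n-2} + (c-n)\binom{j+n-3}{n-2}$; being projective, it splits the sequence. A secondary induction on $j$ (with base $j=1$ provided by the Ulrich hypothesis $I/I^2 \cong (R/I)^{\oplus c}$) then shows that $I^{j-1}/I^j$ is free of rank $k_{j-1} := \binom{j+n-2}{n-1}+(c-n)\binom{j+n-3}{n-1}$, whence $I^j/I^{j+1} \cong I^{j-1}/I^j \oplus \bar I^j/\bar I^{j+1}$ is free of rank $k_{j-1} + \bar k_j$; two applications of Pascal's identity verify that $k_{j-1}+\bar k_j = \binom{j+n-1}{n-1}+(c-n)\binom{j+n-2}{n-1}$, the target rank. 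The main obstacle is the structural input that $a^*$ is a non-zerodivisor on $G$, equivalently the Cohen--Macaulayness of $\mathrm{gr}_I(R)$ for Ulrich ideals; this does not follow in any elementary way from the defining conditions $I^2 = QI$ and freeness of $I/I^2$ and should be imported from \cite{GOTWY}. Granted it, the rest of the argument is a routine double induction on $(n, j)$ via the displayed short exact sequence.
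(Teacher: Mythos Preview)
Your proof is correct but follows a genuinely different route from the paper. The paper works entirely inside $R$ by comparing the $I$-adic and $Q$-adic filtrations: it first proves $I/Q\cong (R/I)^{\oplus(c-n)}$ from the exact sequence $0\to Q/QI\to I/I^2\to I/Q\to 0$ together with the Auslander--Buchsbaum formula over the Artinian local ring $R/I$; then it proves $I^i/Q^i\cong (I/Q)^{\oplus\binom{i+n-2}{n-1}}$ from $0\to Q^{i-1}I/Q^i\to Q^{i-1}/Q^i\to Q^{i-1}/Q^{i-1}I\to 0$; and finally it reads off the result from $0\to Q^i/I^{i+1}\to I^i/I^{i+1}\to I^i/Q^i\to 0$ and one more application of Auslander--Buchsbaum. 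There is no induction on $\dim R$, and no appeal to properties of $\mathrm{gr}_I(R)$ or Valabrega--Valla. By contrast, your argument reduces dimension via $R\to R/aR$ and hinges on the regularity of $a^*$ in $\mathrm{gr}_I(R)$ to obtain the key exact sequence $0\to I^{j-1}/I^j\to I^j/I^{j+1}\to \bar I^j/\bar I^{j+1}\to 0$; the rest is a clean double induction using Pascal's identity. The paper's approach is shorter and completely self-contained (only Auslander--Buchsbaum over $R/I$ is used), while your approach makes the recursive structure of the ranks transparent and explains why the binomial coefficients appear. Both arguments ultimately rely on the identity $I^j=Q^{j-1}I$, but they exploit it in orthogonal ways.
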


\begin{proof} 
We first prove that $I/Q\cong (R/I)^{\oplus (c-n)}$. Indeed, by noting that $I^2=Q I$, we have the exact sequence $0 \to Q/QI =Q/I^2 \to I/I^2 \to I/Q \to 0$. We also observe that $Q/QI \cong (Q/Q^2) \otimes_R R/I \cong (R/I)^{\oplus n}$ and $I/I^2\cong (R/I)^{\oplus c}$. By Auslander-Buchsbaum formula (\cite[Theorem 1.3.3]{BH}), it follows that $I/Q$ is an $R/I$ free module and the rank is $c-n$. 

We second prove that $I^i/Q^i\cong (R/I)^{\oplus (c-n)\binom{i-1+n-1}{n-1}}$ for all $i>0$. We observe the exact sequence 
\[
0 \to I^i/Q^i(=Q^{i-1}I/Q^i) \to Q^{i-1}/Q^i \to Q^{i-1}/Q^{i-1}I \to 0,
\] 
where $I^i/Q^i=Q^{i-1}I/Q^i$ follows from $I^2=Q I$ for $i\ge 2$. We then  obtain that $I^i/Q^i \cong (I/Q)^{\oplus \binom{i-1+n-1}{n-1}}$ because \begin{center}
$Q^{i-1}/Q^i\cong (R/Q)^{\oplus \binom{i-1+n-1}{n-1}}$ and $Q^{i-1}/Q^{i-1}I \cong Q^{i-1}/Q^i \otimes_R R/I \cong (R/I)^{\oplus \binom{i-1+n-1}{n-1}}$.
\end{center} 
Since $I/Q\cong (R/I)^{\oplus (c-n)}$ by the first step, it follows that $I^i/Q^i\cong (R/I)^{\oplus (c-n)\binom{i-1+n-1}{n-1}}$ for all $i>0$.

Now we prove the assertion. Consider the following exact sequence:
\[
0 \to Q^i/Q^i I(=Q^i/I^{i+1}) \to I^i/I^{i+1} \to I^i/Q^i \to 0.
\]
Note that $Q^i/Q^iI=Q^i/I^{i+1} \cong (R/I)^{\oplus \binom{i+n-1}{n-1}}$ and $I^i/Q^i\cong (R/I)^{\oplus (c-n)\binom{i-1+n-1}{n-1}}$. Therefore, by Auslander-Buchsbaum formula, we obtain that $I^i/I^{i+1}$ is an $R/I$-free module of rank $\binom{i+n-1}{n-1} + (c-n)\binom{i-1+n-1}{n-1}$. 
\end{proof}

	\begin{lem}\label{34}  Let $M,N$ be  $R$-modules. Let $\{R_i\}_{i=1}^\ell$ be a sequence of $R$-algebras such that there is an exact sequence 
\begin{align}\label{eq451}
0\to R_{1}^{\oplus a_i}\to R_i\to R_{i-1}^{\oplus b_i}\to 0
\end{align}
of $R$-modules, where $a_i, b_i$ are integers, for each $2\le i\le \ell$. (e.g., $R_i:=R/I^i$ and $0 \to I^{i-1}/I^i(\cong (R/I)^{\oplus a_i}) \to R_i \to R_{i-1}\to 0$ where $I$ is an Ulrich ideal.) Suppose $\Tor^R_{>0}(M\oplus N,R_1)=0$. Then the following hold.
		\begin{enumerate}[\rm(1)]
	\item For all $1\le i\le \ell$, $\Tor^R_{> 0}(M\oplus N,R_i)=0$.
	\item If  $\Ext^{>0}_R(M,N\otimes_R R_1)=0$, then $\Ext^{>0}_{R_i}(M\otimes_RR_i,N\otimes_RR_i)=0$ holds for each $1\le i\le \ell$.	
	\item If $\Ext^{\gg 0}_R(M,N\otimes_R R_1)=0$, then $\Ext^{\gg 0}_{R_i}(M\otimes_RR_i,N\otimes_RR_i)=0$ holds for each $1\le i\le \ell$. 	
\end{enumerate}
	\end{lem}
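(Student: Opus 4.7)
The plan is to prove the three parts in order, with part~(1) being the engine that enables parts~(2) and~(3). The key tools are the long exact sequences in $\Tor$ and $\Ext$ coming from the short exact sequence $0\to R_1^{\oplus a_i}\to R_i\to R_{i-1}^{\oplus b_i}\to 0$, together with the base-change isomorphism of Proposition~\ref{3}(1).

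For (1), I would induct on $i$, with $i=1$ being the hypothesis. For the inductive step, apply $(M\oplus N)\otimes_R-$ to the given short exact sequence to get
\[
\cdots\to\Tor^R_j(M\oplus N,R_1)^{\oplus a_i}\to\Tor^R_j(M\oplus N,R_i)\to\Tor^R_j(M\oplus N,R_{i-1})^{\oplus b_i}\to\cdots.
\]
For each $j>0$, the outer terms vanish by the hypothesis and the inductive hypothesis, forcing the middle term to vanish.

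For (2), I would first use part~(1) applied to $N$ to deduce $\Tor^R_{>0}(N,R_{i-1})=0$, so that tensoring the short exact sequence with $N$ preserves exactness and yields
\[
0\to(N\otimes_R R_1)^{\oplus a_i}\to N\otimes_R R_i\to(N\otimes_R R_{i-1})^{\oplus b_i}\to 0.
\]
Then I would prove the intermediate claim that $\Ext^{>0}_R(M,N\otimes_R R_i)=0$ for each $1\le i\le\ell$ by induction on $i$. The base case $i=1$ is the hypothesis, and the inductive step follows from the long exact sequence
\[
\cdots\to\Ext^j_R(M,N\otimes R_1)^{\oplus a_i}\to\Ext^j_R(M,N\otimes R_i)\to\Ext^j_R(M,N\otimes R_{i-1})^{\oplus b_i}\to\cdots,
\]
whose outer terms vanish for $j>0$ by the base case and the inductive hypothesis. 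Finally, since $\Tor^R_{>0}(M,R_i)=0$ by part~(1) and $N\otimes_R R_i$ is naturally an $R_i$-module, Proposition~\ref{3}(1) supplies an isomorphism $\Ext^n_R(M,N\otimes_R R_i)\cong\Ext^n_{R_i}(M\otimes_R R_i,N\otimes_R R_i)$ for every $n\ge 0$, converting the $R$-vanishing just established into the desired $R_i$-vanishing.

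For (3), the argument of (2) transports verbatim with $>0$ replaced by $\gg 0$: if the outer $\Ext$ groups vanish above some common threshold $J$, then so does the middle one, and the base-change isomorphism again transfers the statement from $R$ to $R_i$. The main point requiring care is simply the logical order — part~(1) must be in hand before parts~(2) and~(3), because the exactness of $N\otimes_R(-)$ on the given sequence (and the base-change isomorphism for $M$) both depend on $\Tor$-vanishing against $R_i$; once (1) is secured, the remaining arguments are essentially bookkeeping with long exact sequences, and I do not anticipate a genuine obstacle.
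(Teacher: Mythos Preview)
Your proposal is correct and follows essentially the same approach as the paper: part~(1) is proved by induction on $i$ via the long exact sequence in $\Tor$, and parts~(2) and~(3) are proved by first tensoring the short exact sequence with $N$ (using part~(1) to preserve exactness), then inducting via the long exact sequence in $\Ext$ to get $\Ext_R^{>0}(M,N\otimes_R R_i)=0$ (resp.\ $\Ext_R^{\gg 0}$), and finally invoking the base-change isomorphism of Proposition~\ref{3}(1). The only cosmetic difference is that the paper states the base-change isomorphism at the outset of~(2) rather than at the end.
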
  
	
	\begin{proof} 
$(1)$: The case of $i=1$ follows by hypothesis. For $i=2$, applying $(M\oplus N)\otimes -$ to \eqref{eq451}, we get $\Tor^R_{> 0}(M\oplus N,R_2)=0$. Suppose the claim for Tor vanishing has been proved for $i-1$. Then again applying $(M\oplus N)\otimes -$ to \eqref{eq451} and the assumption, we get $\Tor^R_{> 0}(M\oplus N,R_i)=0$. 
		
$(2)$: We first note that the isomorphism $\Ext^n_{R_i}(M\otimes_R R_i,N\otimes_R R_i)\cong \Ext^{n}_{R}(M,N\otimes_RR_i)$ holds for all $n$ and for all $0\le i\le \ell$ by Proposition \ref{3}(1) (since we have $\Tor^R_{>0}(M,R_i)=0$ for all $1\le i\le \ell$ by (1)). Hence, it is enough to prove $\Ext^{> 0}_{R}(M,N\otimes_RR_i)=0$ holds for each $1\le i\le \ell$.
 
The case of $i=1$ follows by hypothesis. Since $\Tor^R_{>0}(N,R_i)=0$ for all $1\le i\le \ell$ by (1), for each $2\le i\le \ell$, we get an exact sequence 
\[
0\to (N\otimes_R R_{1})^{\oplus a_i}\to N\otimes_R R_i\to (N\otimes_R R_{i-1})^{\oplus b_i}\to 0
\] 
of $R$-modules. If the claim $\Ext^{> 0}_{R}(M,N\otimes_RR_{i-1})=0$ holds, then again applying $\Hom_R(M,-)$ to the above exact sequence, we get $\Ext^{>0}_{R}(M,N\otimes_RR_i)=0$.  
		
$(3)$: The proof is similar to $(2)$.  
\end{proof}

\begin{Theorem}\label{ul} Let $R$ be a Cohen-Macaulay local ring of dimension $n$. Let $I$ be an Ulrich ideal such that $R/I$ is Gorenstein. Then the following hold true.
\begin{enumerate}[\rm(1)] 
\item If $R/I$ satisfies {\rm (SAC)} and $n \ge 2$, then $R/I^\ell$ satisfy {\rm (SAC)} for all $2\le \ell \le n$. 
\item If $R/I^\ell$ satisfies {\rm (SAC)} for some $\ell>0$ and $\mu(I)-\dim R>1$, then $R$ satisfies {\rm (SAC)}.
\end{enumerate}
\end{Theorem}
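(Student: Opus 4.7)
For (1), the plan is to apply Lemma~\ref{estimate} to the filtration $Q_i := I^i$, so that $R/Q_1 = R/I$ is Gorenstein and satisfies (SAC) by hypothesis. By Lemma~\ref{ulrank}, for $2 \le i \le n$ the quotient $Q_{i-1}/Q_i = I^{i-1}/I^i$ is $R/I$-free of rank
\[
a_i = \binom{i+n-2}{n-1} + (c-n)\binom{i+n-3}{n-1}, \qquad c := \mu(I).
\]
The remaining hypothesis of Lemma~\ref{estimate} is that $a_\ell > 1 + \sum_{i=2}^{\ell-1} a_i$ for each $2 \le \ell \le n$. Using the hockey-stick identity to evaluate the two telescoping sums yields $1 + \sum_{i=2}^{\ell-1}a_i = \binom{\ell+n-2}{n} + (c-n)\binom{\ell+n-3}{n}$, so the inequality reduces to
\[
\Bigl[\binom{\ell+n-2}{n-1} - \binom{\ell+n-2}{n}\Bigr] + (c-n)\Bigl[\binom{\ell+n-3}{n-1} - \binom{\ell+n-3}{n}\Bigr] > 0.
\]
For $2 \le \ell \le n$ both $\ell+n-2 \le 2n-2$ and $\ell+n-3 \le 2n-3$ lie at or below the peak of the corresponding binomial row, so $\binom{k}{n-1} > \binom{k}{n}$ for these $k$ by unimodality; combined with $c \ge n \ge 2$, positivity follows. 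Lemma~\ref{estimate} then yields (SAC) for $R/I^\ell$ for every $2 \le \ell \le n$.

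For (2), the strategy is to pass through $R/I^\ell$. Let $M$ be a finitely generated $R$-module with $\Ext^{>0}_R(M,R) = 0$ and $\Ext^{\gg 0}_R(M,M) = 0$; by Lemma~\ref{b2.1} it suffices to show $\pd_R M < \infty$. I would first replace $M$ by a sufficiently high syzygy $N$ so that $\Ext^{>0}_R(N, R \oplus N) = 0$ via a standard dimension-shift argument; since $R$ is Cohen--Macaulay and $N$ is totally reflexive, $N$ is maximal Cohen--Macaulay. The critical preliminary step is to establish the $\Tor$-vanishing $\Tor_{>0}^R(N, R/I) = 0$: because $Q$ is generated by a regular sequence on the MCM module $N$, the Koszul argument gives $\Tor_{>0}^R(N, R/Q) = 0$, and then the Ulrich identity $I^2 = QI$ together with the short exact sequence $0 \to Q \to I \to I/Q \to 0$ with $I/Q \cong (R/I)^{c-n}$ allows one to propagate the vanishing to $R/I$. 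Lemma~\ref{34}(1) then upgrades this to $\Tor_{>0}^R(N, R/I^\ell) = 0$, and Proposition~\ref{3}(1) identifies Ext over $R$ (with $R/I^\ell$-module coefficients) with Ext over $R/I^\ell$.

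With the $\Tor$-vanishing in place, I would verify $\Ext^{>0}_R(N, R/I^\ell) = 0$ and $\Ext^{\gg 0}_R(N, N/I^\ell N) = 0$: exploiting the filtration of $R/I^\ell$ whose successive quotients are $R/I$-free of ranks $a_j$, both reduce to the corresponding statements with $R/I$ in place of $R/I^\ell$, and it is at this reduction that the hypothesis $c - n > 1$ enters quantitatively, playing (in the reverse direction) the role of the ratio condition in Lemma~\ref{estimate} to drive the length estimates. These vanishings translate via Proposition~\ref{3}(1) into $\Ext^{>0}_{R/I^\ell}(N/I^\ell N, R/I^\ell) = 0$ and $\Ext^{\gg 0}_{R/I^\ell}(N/I^\ell N, N/I^\ell N) = 0$. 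Applying (SAC) for $R/I^\ell$ yields $\pd_{R/I^\ell}(N/I^\ell N) < \infty$, and Lemma~\ref{5} finally gives $\pd_R N < \infty$, hence $\pd_R M < \infty$.

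The main obstacle I anticipate is the combination of the $\Tor$-vanishing $\Tor_{>0}^R(N, R/I) = 0$ under the Ulrich condition and the binomial/length estimate in the reverse direction analogous to that of Lemma~\ref{estimate}: the Ulrich structure supplies the clean quotients $I^{i-1}/I^i \cong (R/I)^{a_i}$, but the precise inequality needed to iterate is delicate, and this is exactly where the quantitative hypothesis $\mu(I) - \dim R > 1$ must provide the decisive leverage.
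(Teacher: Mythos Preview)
Your argument for part (1) is correct and essentially identical to the paper's: apply Lemma~\ref{estimate} to $Q_i = I^i$, read off the ranks $a_i$ from Lemma~\ref{ulrank}, and verify the ratio condition by the same hockey-stick/unimodality computation.

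For part (2), the global strategy (reduce modulo $I^\ell$, apply (SAC) there, lift via Lemma~\ref{5}) matches the paper, but your route to the key input $\Tor^R_{>0}(N,R/I)=0$ has a genuine gap. From $\Tor^R_{>0}(N,R/Q)=0$ and the sequence $0\to I/Q\to R/Q\to R/I\to 0$ with $I/Q\cong(R/I)^{\,c-n}$, the long exact sequence yields
\[
\Tor^R_{i+1}(N,R/I)\ \cong\ \Tor^R_i(N,R/I)^{\oplus(c-n)}\qquad(i\ge 1),
\]
so lengths \emph{increase} by the factor $c-n>1$; this recursion cannot force vanishing. (Contrast this with Ext, where the analogous recursion reads $\ell\bigl(\Ext^i_R(N,R/I)\bigr)=(c-n)\,\ell\bigl(\Ext^{i+1}_R(N,R/I)\bigr)$, so lengths \emph{decrease} and must hit zero.) Thus ``propagating'' Tor-vanishing from $R/Q$ to $R/I$ directly does not work; this is also why your placement of the hypothesis $c-n>1$ at the $R/I^\ell$-to-$R/I$ reduction is off.

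The paper instead first uses the Ext recursion above (this is exactly where $c-n>1$ enters) to obtain $\Ext^{>0}_R(N,R/I)=0$, and only then invokes the Gorenstein hypothesis on $R/I$ via Fact~\ref{c2.7}(1) to convert this into $\Tor^R_{>0}(N,R/I)=0$. After that, the passage to $R/I^\ell$ is handled by Lemma~\ref{34} without any further quantitative estimate. Two smaller points: passing to a high syzygy gives only $\Ext^{\gg 0}_R(N,N)=0$ (not $\Ext^{>0}$), since $\Ext^i_R(\syz^k_R M,\syz^k_R M)\cong\Ext^i_R(M,M)$ for $i\ge 1$; and $N$ is MCM simply because it is a $d$-th syzygy over a Cohen--Macaulay ring of dimension $d$, not because it is totally reflexive.
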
 

\begin{proof} 
(1): We apply Lemma \ref{estimate} with $Q_i :=I^i$ for $1\le i\le n$. By Lemma \ref{ulrank}, $I^{i-1}/I^{i}$ is $R/I$-free and thus we only need to check that the rank $a_i$ of $I^{i-1}/I^{i}$ satisfies the condition  $\frac{(1+\sum_{i=2}^{\ell-1}a_i)}{a_\ell}<1$ for each $2\le \ell \le n$.

Let $\mu(I)=c$. By Lemma \ref{ulrank} we have $a_i=\binom{i-1+n-1}{n-1}+(c-n)\binom{i-2+n-1}{n-1}$. If $\ell=2$, then $\frac{(1+\sum_{i=2}^{\ell-1}a_i)}{a_\ell}=\dfrac 1{a_2}=\dfrac{1}{\binom{2-1+n-1}{n-1}+(c-n)\binom{2-2+n-1}{n-1}}=\dfrac{1}{n+(c-n)}=\dfrac 1 c<1$ as $c\ge \dim R=n>1$. If $\ell\ge 3$, then 
\begin{eqnarray*}1+\sum_{i=2}^{\ell-1} a_i&=&1+\sum_{i=2}^{\ell-1}\binom{i-1+n-1}{n-1}+(c-n)\binom{i-2+n-1}{n-1} \\&=&\sum_{i=0}^{\ell-2}\binom{i+n-1}{n-1}+(c-n)\sum_{i=0}^{\ell-3}\binom{i+n-1}{n-1}\\&=&\binom{\ell-2+n}{n}+(c-n)\binom{\ell-3+n}{n}\\&<&\binom{\ell-2+n}{n-1}+(c-n)\binom{\ell+n-3}{n-1}=a_\ell 
\end{eqnarray*}
(as $\ell \le n$ implies $\binom{\ell-2+n}{n}<\binom{\ell-2+n}{n-1}$ and $\binom{\ell-3+n}{n}\le \binom{\ell-3+n}{n-1}$). Thus we get $\frac{(1+\sum_{i=2}^{\ell-1}a_i)}{a_\ell}<1$ for all $2\le \ell \le n$. Therefore, the result follows from Lemma \ref{estimate}. 

(2): The idea of the proof is similar to \cite[Theorem 2.6 (3) $\Rightarrow$ (1)]{K}. We include the proof since some details are different. Let $M$ be a finitely generated $R$-module such that $\Ext^{\gg 0}_R(M,M)=\Ext^{>0}_R(M,R)=0$. We claim that $M$ is free. By Lemma \ref{6}, we have $\Ext^{\gg 0}_R(\syz^n_R M,\syz^n_R M)=0=\Ext^{>0}_R(M,R)$ for all $n\ge 1$. Therefore, passing to high enough syzygy, we may assume $M$ is a MCM $R$-module.  

Let $Q$ be a parameter ideal such that $I^2=QI$. Note that $\Ext^{>0}_R(M,R)=0$ implies $\Ext^{>0}_R(M,R/Q)=0$. Let $a:=\mu(I)-\dim R$. By the proof of Lemma \ref{ulrank} we have $I/Q\cong(R/I)^{\oplus a}$. Applying $\Hom_R(M,-)$ to the exact sequence $0\to (R/I)^{\oplus a} \cong I/Q\to R/Q\to R/I\to 0$, we get $\Ext^i_R(M,R/I)\cong \Ext^{i+1}_R(M,R/I)^{\oplus a}$  for all $i\ge 1$. Let $n_i=\ell_R (\Ext^i_R(M,R/I))$. Then, $n_i=an_{i+1}$  for all $ i\ge 1$ and $n_1=a^in_{i+1}$  for all $i\ge 1$. Since $a>1$, we have $n_{i+1}=\dfrac{n_1}{a^i}<1$  for all $i\gg 0$. Since $n_i$'s are integers for all $i\gg 0$, we get $n_i=0$. Hence  $\Ext^i_R(M,R/I)=0$  for all $ i\gg 0$. Now $\Ext^j_R(M,R/I)\cong \Ext^{j+1}_R(M,R/I)^{\oplus a}$  for all $j\ge 1$ imply that $\Ext^j_R(M,R/I)=0$  for all $ j\ge 1$. By Fact \ref{c2.7}(1), we have $\Tor^R_{>0}(M,R/I)=0$. 
Since $M$ is MCM and  $\Ext^{\gg 0}_R(M,M)=0$, we have $\Ext^{\gg 0}_R(M,M/QM)=0$. Applying $-\otimes_R M$ to $0\to (R/I)^{\oplus a} \to R/Q\to R/I\to 0$, we get an exact sequence $0\to (M/IM)^{\oplus a}\to M/QM \to M/IM \to 0$. Applying $\Hom_R(M,-)$ to this we get $\Ext^i_R(M,M/IM)\cong \Ext^{i+1}_R(M,M/IM)^{\oplus a}$ for all $i\gg 0$. Again considering length of each $\Ext^i_R(M,M/IM)$, similar to the above argument, we get $\Ext^j_R(M,M/IM)=0$ for all $j\gg 0$. 

Now we have $\Tor^R_{>0}(M,R/I)=0=\Ext^{\gg 0}_R(M,M\otimes_R R/I)=\Ext^{>0}_R(M,R/I)$ and exact sequences $0\to (R/I)^{\oplus a_i}\cong I^{i-1}/I^{i}\to R/I^i\to R/I^{i-1}\to 0$ for all $i\ge2$. Applying Lemma \ref{34} with $M=N$ and $N=R$ respectively and $R_i:=R/I^i$ for all $i\ge 1$, we get $\Ext^{\gg 0}_{R/I^\ell}(M\otimes_R R/I^\ell, M\otimes_R R/I^\ell)=0=\Ext^{>0}_{R/I^\ell}(M\otimes_R R/I^\ell, R/I^\ell)$. Now $R/I^\ell$ satisfies (SAC) implies $\pd_{R/I^\ell}(M\otimes_R R/I^\ell)$ is finite. Since $\Tor^{>0}_R(M,R/I^\ell)=0$, by Lemma \ref{5}, we get $\pd_R M<\infty$. Now \cite[Lemma 1 (iii), page 154]{Mat} and $\Ext^{>0}_R(M,R)=0$ imply $M$ is free.
\end{proof}


We note assumptions on Ulrich ideal $I$ in Theorem \ref{ul}.

	\begin{rem}\label{ulag}
Let $R$ be a Cohen-Macaulay local ring and $I$ an Ulrich ideal of $R$. 
Then, $R$ is Gorenstein if and only if $R/I$ is Gorenstein and $\mu_R(I)=\dim R+1$ (\cite[Corollary 2.6(b)]{GOTWY}). On the other hand, if $R$ is non-Gorenstein almost Gorenstein rings and the type of $R$ is a prime number (for example of such rings, see \cite[Theorem 3.4]{GOTWY}), then $R/I$ is Gorenstein and $\mu_R(I)-\dim R>1$ (\cite[Corollary 2.11]{GTT2}). Other examples of Ulrich ideals $I$ such that $R/I$ is Gorenstein and $\mu_R(I)-\dim R>1$ are in \cite[Example 3.11]{K}.
\end{rem}



\if0
\old{

Combining Lemma \ref{l47}, Proposition \ref{p48}, and Theorem \ref{ul}(2) we get the following.

\begin{cor}\label{numerical}
Let $A$ and $R$ be as in Proposition \ref{p48} and assume that $A$ has minimal multiplicity. Let $\m$ denote the maximal ideal of $A$. Then $\m R$ is an Ulrich ideal of $R$, $R/\fkm R$ is a hypersurface and $R$ satisfies {\rm (SAC)}.  
\end{cor}

\begin{proof}  Since $\m$ is an Ulrich ideal of $A$, by Lemma \ref{l47} and the isomorphism of  Proposition \ref{p48}, we get that $\m R$ is an Ulrich ideal of $R$ and $R/\fkm R$ is a hypersurface. Thus $R/\m R$ satisfies (SAC) by Corollary \ref{a0.4}. 

Now to see $R$ satisfies (SAC), we divide the proof in two cases:

Case 1. Suppose that $\mu(\m)\le 2$. In this case, $A$ is a hypersurface, and so it satisfies (SAC) by Corollary \ref{a0.4}, so $A[X]_{(\m, X)}$ satisfies (SAC) by Corollary \ref{localiz} (as $A$ is Cohen-Macaulay). Now by Lemma \ref{l47}, $R$ is local and hence $R\cong A[X]_{(\m, X)}/(X^n-t^m)A[X]_{(\m, X)}$. Since $A[X]$ is an integral domain, we get that $R$ satisfies (SAC) by  Corollary \ref{a0.3}.

Case 2. Suppose that $\mu(\m)\ge 3$. In this case $\mu(\m R)-\dim R\ge \mu(\m)-1\ge 2$. Since $R/\m R$ satisfies (SAC) and $\m R$ is an Ulrich ideal of $R$, we get that $R$ satisfies (SAC) by Theorem \ref{ul}(2).   
\end{proof}

We here note a concrete example of a ring in Corollary \ref{numerical}, further illustrating Theorem \ref{ul}(2).  

\begin{ex}\label{ex47}
Let $R=k[[t^8, t^{11}, t^{12}, t^{14}, t^{18}]]$ be a numerical semigroup ring over an infinite field $k$, where $k[[t]]$ is the formal power series ring over $k$. Let $I=(t^8, t^{12}, t^{14}, t^{18})$ be an ideal of $R$ and $Q=(t^8)$ a reduction of $I$. Then $I$ is an Ulrich ideal such that $R/I$ is a hypersurface. Indeed, letting $A=k[[t^4, t^6, t^7, t^9]]$, one can check that $R\cong A[X]/(X^{2}-t^{11})$ and $A$ has minimal multiplicity.
\end{ex}}
\fi 


The following is an example of a ring satisfying the assumption of Theorem \ref{ul}(2).

\begin{ex}\label{ex47}
Let $R=k[[t^8, t^{11}, t^{12}, t^{14}, t^{18}]]$ be a numerical semigroup ring over a field $k$, where $k[[t]]$ is the formal power series ring over $k$.  Let $I=(t^8, t^{12}, t^{14}, t^{18})$ be an ideal of $R$ and $Q=(t^8)$ a reduction of $I$. Then $I$ is an Ulrich ideal such that $R/I$ is a complete intersection. Indeed, one can check that $I^2=QI$, $\ell_R(R/I)=2$, and $\ell_R(I/I^2)=8=\mu_R(I) \ell_R(R/I)$. It follows that $I/I^2\cong (R/I)^{\oplus 4}$.
Noting that $R/I$ satisfies (SAC) since $R/I$ is a hypersurface, $R$ satisfies (SAC).
\end{ex}


\begin{Theorem}\label{a0.5}
Let $(R, \fkm)$ be a Gorenstein local ring and $x_1, \dots, x_{n}$ be a regular sequence on $R$. Set $Q=(x_1, \dots, x_n)$. Then the following conditions are equivalent:
\begin{enumerate}[{\rm (1)}] 
\item $R$ satisfies {\rm (SAC)}.
\item $R/Q$ satisfies {\rm (SAC)}.
\item $R/Q^\ell$ satisfies {\rm (SAC)} for some integer $\ell>0$.
\item $R/Q^\ell$ satisfies {\rm (SAC)} for all integers $1\le \ell\le n$.
\end{enumerate}
\end{Theorem}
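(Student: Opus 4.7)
The plan is to establish the cycle $(1)\Leftrightarrow(2)$, $(3)\Rightarrow(1)\Rightarrow(4)\Rightarrow(3)$, by combining three tools already developed in the previous sections: Corollary \ref{a0.3} (reduction modulo a regular element in positive depth), Corollary \ref{cor3.8} (descent along an ideal of finite projective dimension), and Theorem \ref{ul}(1) (Ulrich‐ideal ascent). Since $R$ is Gorenstein, $\depth R = \dim R \ge n$, so every intermediate quotient $R/(x_1,\ldots,x_k)$ with $0\le k<n$ has depth $\dim R-k\ge 1>0$; iterating Corollary \ref{a0.3} along $x_1,\ldots,x_n$ therefore gives $(1)\Leftrightarrow(2)$, and $(4)\Rightarrow(3)$ is trivial.

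For $(3)\Rightarrow(1)$ the only thing to verify is $\pd_R(R/Q^\ell)<\infty$, after which Corollary \ref{cor3.8} (applied with $I=Q^\ell$) finishes. Since $x_1,\ldots,x_n$ is a regular sequence, the associated graded $\bigoplus_{k\ge 0}Q^k/Q^{k+1}$ is the polynomial ring $(R/Q)[T_1,\ldots,T_n]$, so each $Q^{k-1}/Q^k$ is $R/Q$-free and satisfies $\pd_R(Q^{k-1}/Q^k)=\pd_R(R/Q)=n$ via the Koszul complex. Induction on $\ell$, using the short exact sequences $0\to Q^{\ell-1}/Q^\ell\to R/Q^\ell\to R/Q^{\ell-1}\to 0$, then bounds $\pd_R(R/Q^\ell)$.

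The substantial content is $(1)\Rightarrow(4)$: the case $\ell=1$ is $(1)\Rightarrow(2)$, so assume $2\le\ell\le n$, in particular $n\ge 2$. The idea is to reduce to the Ulrich‐ideal setting of Theorem \ref{ul}(1) by completing $x_1,\ldots,x_n$ to a system of parameters $x_1,\ldots,x_n,y_1,\ldots,y_m$ of $R$ (possible by Cohen--Macaulayness, with $m=\dim R-n$). Set $\overline R:=R/(y_1,\ldots,y_m)$ and $\overline Q:=Q\overline R$. Then $\overline R$ is Gorenstein of dimension $n\ge 2$, $\overline Q$ is a parameter ideal and hence an Ulrich ideal of $\overline R$, and $\overline R/\overline Q$ is Artinian Gorenstein. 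The crucial technical step will be to show that $y_1,\ldots,y_m$ is a regular sequence on $R/Q^\ell$: it is regular on $R/Q$ by the choice of system of parameters, hence regular on every $R/Q$-free graded piece $Q^{k-1}/Q^k$, and a straightforward induction on $\ell$ using the short exact sequences $0\to Q^{\ell-1}/Q^\ell\to R/Q^\ell\to R/Q^{\ell-1}\to 0$ then lifts regularity to $R/Q^\ell$. Granted this, iterating Corollary \ref{a0.3} (with positive depth easy to check at every stage) yields the equivalences $(\text{SAC for }R)\Leftrightarrow(\text{SAC for }\overline R)\Leftrightarrow(\text{SAC for }\overline R/\overline Q)$ (first along the $y_j$'s in $R$, then the $x_i$'s in $\overline R$) together with $(\text{SAC for }R/Q^\ell)\Leftrightarrow(\text{SAC for }\overline R/\overline Q^\ell)$ (along the $y_j$'s in $R/Q^\ell$). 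Assuming (1), we thus obtain (SAC) for $\overline R/\overline Q$; Theorem \ref{ul}(1) upgrades this to (SAC) for $\overline R/\overline Q^\ell$ for all $2\le\ell\le n$; and the last equivalence transfers this back to (SAC) for $R/Q^\ell$. In the degenerate case $\dim R=n$ (so $m=0$), no $y_j$'s are needed and Theorem \ref{ul}(1) applies directly to $R$ with Ulrich ideal $Q$.

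The main obstacle will be the regular‐sequence claim for $y_1,\ldots,y_m$ on $R/Q^\ell$ and the careful tracking of positive‐depth hypotheses required to iterate Corollary \ref{a0.3} in several directions; beyond these routine verifications, the argument is a sequence of direct invocations of results already established.
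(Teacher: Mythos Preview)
Your proof is correct and follows essentially the same route as the paper. The equivalence $(1)\Leftrightarrow(2)$, the trivial $(4)\Rightarrow(3)$, the descent $(3)\Rightarrow(1)$ via finite projective dimension of $R/Q^\ell$ and Corollary~\ref{cor3.8}, and the reduction in $(1)\Rightarrow(4)$ to the case $n=\dim R$ followed by Theorem~\ref{ul}(1) are all exactly what the paper does. The only differences are cosmetic: for $(3)\Rightarrow(1)$ the paper cites the Eagon--Northcott resolution directly for $\pd_R R/Q^\ell<\infty$ while you argue via the filtration by $Q^{k-1}/Q^k$, and for the reduction step the paper observes more succinctly that $R/Q^\ell$ is Cohen--Macaulay of dimension $\dim R-n$ (so any element extending $x_1,\dots,x_n$ to a longer regular sequence is automatically regular on $R/Q^\ell$), whereas you verify regularity of the $y_j$'s by induction on the same filtration.
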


%


\begin{proof}

$(1) \Leftrightarrow (2)$ follows from Corollary \ref{a0.3}. Since $\pd_R R/Q^\ell$ is finite by \cite{EN}, $(3)\Rightarrow (1)$ follows from Corollary \ref{cor3.8}. $(4) \Rightarrow (3)$ is clear. Hence, it is enough to show that $(1)\Rightarrow (4)$. 

$(1)\Rightarrow (4)$: We may assume that $n=\dim R$, that is, $Q$ is a parameter ideal of $R$. Indeed, if $n< \dim R$, we can choose $x_{n+1}\in \fkm$ so that $x_1, \dots, x_n, x_{n+1}$ is a regular sequence of $R$. Then $x_{n+1}$ is a regular element of $R/Q^\ell$ for all $\ell>0$ since $R/Q^\ell$ is a Cohen-Macaulay ring of dimension $\dim R-n$.
Hence, for all $\ell>0$, $R/Q^\ell$ satisfies {\rm (SAC)} if and only if $R/\left[Q^\ell+(x_{n+1})\right]$ satisfies {\rm (SAC)} by Corollary \ref{a0.3}.

Suppose that $n=\dim R$. By Corollary \ref{a0.3} again, we may assume that $n\ge 2$ and $\ell\ge 2$. So now the claim follows from Theorem \ref{ul}(1) as every parameter ideal is an Ulrich ideal.   
\end{proof}

The following assertion replaces (ARC) with (SAC) in the assertion of \cite[Theorem 1.3(3)]{K}. Therefore, since (SAC) implies (ARC), this recovers \cite[Theorem 1.3(3)]{K}.

\begin{Corollary}\label{c2.11}
Let $R$ be a Cohen-Macaulay local ring. Suppose that there exists an Ulrich ideal $I$ such that $R/I$ is a complete intersection. Then $R$ satisfies {\rm (SAC)}.
\end{Corollary}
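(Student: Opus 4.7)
The plan is to combine Corollary~\ref{a0.4} with Theorem~\ref{ul}(2). Since $R/I$ is a complete intersection, Corollary~\ref{a0.4} gives that $R/I$ satisfies (SAC), and in particular $R/I$ is Gorenstein. Hence whenever $\mu_R(I)-\dim R>1$, the hypotheses of Theorem~\ref{ul}(2) are in place with $\ell=1$, and we conclude immediately that $R$ satisfies (SAC).

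For the edge cases $\mu_R(I)-\dim R\le 1$, I would split on $\mu_R(I)$. If $\mu_R(I)=\dim R$, then $I$ is a parameter ideal in the Cohen--Macaulay ring $R$, hence generated by a regular sequence, so $\pd_R I<\infty$; applying Corollary~\ref{cor3.8} with this ideal of finite projective dimension then transfers (SAC) from $R/I$ back to $R$. If $\mu_R(I)=\dim R+1$, then Remark~\ref{ulag} forces $R$ to be Gorenstein (since $R/I$ is Gorenstein as it is a complete intersection); in this subcase I would argue, in the spirit of \cite[Theorem~1.3(3)]{K} together with the Ulrich-ideal structure theory of \cite{GOTWY}, that the combination of $R/I$ being a complete intersection with $I$ Ulrich of minimal number of generators $\dim R+1$ forces $R$ itself to be a complete intersection, after which Corollary~\ref{a0.4} applies again.

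The main obstacle is precisely this last subcase. The length-contraction argument underlying Theorem~\ref{ul}(2) collapses here because the relevant contraction factor $1/(\mu_R(I)-\dim R)$ equals $1$, so one cannot force $\ell_R(\Ext^j_R(M,R/I))\to 0$ as $j\to\infty$ by a direct counting argument. Bridging this gap requires a finer use of Ulrich-ideal theory—showing that a Gorenstein local ring admitting an Ulrich ideal with complete-intersection quotient and exactly one extra generator beyond $\dim R$ must itself be a complete intersection—rather than any purely homological manipulation with $\Ext$.
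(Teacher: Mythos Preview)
Your route diverges from the paper's and carries a real gap in the case $\mu_R(I)=\dim R+1$.

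The paper does not split on $\mu_R(I)-\dim R$ at all. It invokes the structure result \cite[Theorem~1.3(2)]{K}: there is a parameter ideal $\fkq$ of $R$, a local complete intersection $S$, and a parameter ideal $Q$ of $S$ with $R/\fkq\cong S/Q^2$. Then Corollary~\ref{a0.4} gives {\rm(SAC)} for $S$, Theorem~\ref{a0.5} passes it to $S/Q^2\cong R/\fkq$, and Corollary~\ref{a0.3} lifts it back to $R$. This is uniform in $\mu_R(I)$.

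Your argument is fine when $\mu_R(I)-\dim R>1$ (Theorem~\ref{ul}(2) with $\ell=1$) and when $\mu_R(I)=\dim R$ (parameter ideal, Corollary~\ref{a0.3}). But the remaining case $\mu_R(I)=\dim R+1$ is not a fringe case: by Remark~\ref{ulag} it is precisely the situation in which $R$ is Gorenstein, so it covers every Gorenstein $R$ with a non-parameter Ulrich ideal whose quotient is a complete intersection. Your proposed resolution---that such an $R$ must itself be a complete intersection---is asserted but not proved, and you explicitly flag it as the obstacle. Appealing to \cite[Theorem~1.3(3)]{K} is circular here, since that is exactly the {\rm(ARC)} statement Corollary~\ref{c2.11} is meant to strengthen, and its proof in \cite{K} already rests on the same structure isomorphism \cite[Theorem~1.3(2)]{K}. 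In other words, the natural way to close your gap is to use the structure theorem, at which point the paper's direct application of Theorem~\ref{a0.5} is shorter than first deducing that $R$ is a complete intersection.
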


\begin{proof}
By \cite[Theorem 1.3(2)]{K}, there exists an isomorphism of rings $R/\fkq\cong S/Q^2$ for some parameter ideal $\fkq$ of $R$, local complete intersection $S$ of dimension $r$, and parameter ideal $Q$ of $S$. Therefore, since $S$ satisfies (SAC) by Corollary \ref{a0.4}, we obtain the assertion by applying Theorem \ref{a0.5}.
\end{proof}
	

For examples of rings that have an Ulrich ideal whose residue ring is a complete intersection, see \cite[Example 3.11]{K}. 

Due to Theorem \ref{a0.5}, we can strengthen \cite[Proposition 2.11 and Theorem 2.16]{K} by replacing (ARC) with (SAC) in the conclusion. We should note that we need not assume the completeness of a given ring, although \cite[Proposition 2.11]{K} assumes it. (This is because of the difference of the hypothesis between Theorem \ref{a0.3} and \cite[Proposition 2.2]{K}.)

\begin{cor}\label{c411}
Let $s$, $t$ be positive integers and assume that $2s\le t+1$. Let $\alpha_{ij}$ be positive integers for all $1\le i\le s$ and $1\le j\le t$.
Suppose that $S$ is a Gorenstein local ring and $\{ x_{ij} \}_{1\le i\le s, 1\le j\le t}$ forms a regular sequence on $S$.
Let $I=I_s (x_{ij}^{\alpha_{ij}})$ be an ideal of $S$ generated by $s\times s$ minors of the $s\times t$ matrix $(x_{ij}^{\alpha_{ij}})$.
Set $R=S/I$. 
Then $S$ satisfies {\rm (SAC)} if and only $R$ satisfies {\rm (SAC)}.
\end{cor}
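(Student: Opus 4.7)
The plan is to parallel the proof of \cite[Theorem 2.16]{K}, which establishes the (ARC)-version of the statement, by substituting our (SAC)-transfer tools—chiefly Theorems \ref{a0.5} and \ref{ul}—for the (ARC)-analogues used there. The key idea is to identify a canonical Ulrich ideal $J$ of $R$ whose residue ring $R/J$ is obtained from $S$ by modding out by a regular sequence, thereby linking the (SAC) behavior of $R$ and $S$ through the common quotient $R/J$.

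First I would reduce to $\dim S = st$ by extending $\{x_{ij}\}$ to a system of parameters with additional non-zero-divisors of $S$; since such non-zero-divisors remain regular on the Cohen-Macaulay ring $R = S/I$, Corollary \ref{a0.3} applied iteratively preserves (SAC) on both sides. Then I would set $J := (x_{ij}^{\alpha_{ij}})R$. By Laplace expansion, each $s \times s$ minor of the matrix $(x_{ij}^{\alpha_{ij}})$ lies in $(x_{ij}^{\alpha_{ij}})S$, giving
\[
R/J \;\cong\; S/(x_{ij}^{\alpha_{ij}}),
\]
a Gorenstein ring. Standard Ulrich-ideal computations in the spirit of \cite{GOTWY, K} show $J$ is Ulrich in $R$, and one computes $\mu_R(J) = st$ and $\dim R = (s-1)(t+1)$ via Eagon-Northcott, giving $\mu_R(J) - \dim R = t - s + 1 \ge s \ge 2$ under the hypothesis $2s \le t+1$; in particular $\mu_R(J) - \dim R > 1$. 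Theorem \ref{a0.5} applied to the Gorenstein ring $S$ and the regular sequence $(x_{ij}^{\alpha_{ij}})$ then gives the equivalence $S$ satisfies (SAC) $\Leftrightarrow$ $R/J$ satisfies (SAC). Combining this with Theorem \ref{ul}(2) yields the forward implication $S$ (SAC) $\Rightarrow$ $R/J$ (SAC) $\Rightarrow$ $R$ (SAC).

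The main obstacle will be the reverse direction, $R$ (SAC) $\Rightarrow$ $R/J$ (SAC), since the Ulrich-ideal transfer in Theorem \ref{ul} only runs from $R/J$ to $R$ (or between $R/J$ and $R/J^{\ell}$), and Theorem \ref{mainsec3} does not apply because $\fd_R R/J = \infty$ for a non-parameter Ulrich $J$. To close this gap, I would follow the argument in \cite[Theorem 2.16]{K}: iterate Corollary \ref{a0.3} on $R$ to reduce $R$ (SAC) to (SAC) of a suitable sub-Artinian quotient $R/\fkq$ by a regular sequence $\fkq$ chosen using the combinatorial structure of the matrix, so that $R/\fkq$ identifies with $S/\fkb$ for an ideal $\fkb$ of $S$ generated by a regular sequence; then pull (SAC) back to $S$ through Theorem \ref{a0.5} (or Corollary \ref{a0.3} iterated on $S$). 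The combinatorial verification that such a $\fkq$ exists as a regular sequence on $R$—and that the resulting $\fkb$ has a complete-intersection presentation over $S$—is where the hypothesis $2s \le t+1$ enters essentially.
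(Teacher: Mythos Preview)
You have the difficulties of the two directions reversed. The implication ``$R$ satisfies (SAC) $\Rightarrow$ $S$ satisfies (SAC)'' is the easy one: you already use that $R$ is Cohen--Macaulay of dimension $(s-1)(t+1)$, which amounts to $\height_S I = t-s+1$; the Eagon--Northcott complex \cite{EN} then resolves $R$ over $S$, so $\pd_S R<\infty$, and Corollary~\ref{cor3.8} (or Theorem~\ref{mainsec3}) finishes this direction in one line. Your proposed combinatorial search for a regular sequence $\fkq$ on $R$ with $R/\fkq\cong S/\fkb$ is unnecessary, and as written it is only a sketch of a plan, not an argument.

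The forward direction ``$S$ satisfies (SAC) $\Rightarrow$ $R$ satisfies (SAC)'' is where the work lies, and here you diverge from the paper in two ways. First, the correct parallel is \cite[Proposition~2.11]{K}, not \cite[Theorem~2.16]{K}; the latter handles the non-local polynomial determinantal ring and is the template for Corollary~\ref{det52}, not for the present local statement. Second, your route via Theorem~\ref{ul}(2) can in principle succeed---and $\mu_R(J)=st$ does hold for $s\ge 2$, since every $s\times s$ minor lies in $\fkm_S\cdot(x_{ij}^{\alpha_{ij}})$---but the assertion that $J=(x_{ij}^{\alpha_{ij}})R$ is an Ulrich ideal of $R$ is not a ``standard computation''; it is exactly the nontrivial structural input, and you have not supplied it. The paper instead transplants the proof of \cite[Proposition~2.11]{K} verbatim, replacing each appeal to \cite[Theorem~2.6]{K} by Theorem~\ref{a0.5}; that argument delivers both implications at once, and it is there (not in the reverse direction) that the hypothesis $2s\le t+1$ is genuinely needed.
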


\begin{proof}
This can be proved in the same way of the proof of \cite[Proposition 2.11]{K}, just by replacing with (ARC) with (SAC) and using Theorem \ref{a0.5}.
\end{proof}

We next investigate determinantal rings, which are not local. Let $A$ be a commutative ring and 
\[
A[\mathbf{X}]=A[X_{ij}\mid 1\le i\le s, 1\le j\le t]
\] 
be a polynomial ring over $A$. 
Suppose that $s\le t$ and $I_s (\mathbf{X})$ is an ideal of $A[\mathbf{X}]$ generated by $s\times s$ minors of the $s\times t$ matrix $\mathbf{X}=(X_{ij})$. Then $A[{\bf X}]/I_s({\bf X})$ is called the {\it determinantal ring} over $A$. 


\begin{cor}\label{det52}
Suppose that $A$ is a complete intersection for all localizations at maximal ideals. Then the determinantal ring $A[\mathbf{X}]/I_s(\mathbf{X})$ also satisfies {\rm (SAC)} if $2s\le t+1$.
\end{cor}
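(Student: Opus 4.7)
The plan is to mirror the proof of \cite[Theorem 2.16]{K} in the {\rm (SAC)} setting, replacing the appeal to \cite[Proposition 2.11]{K} by our Corollary \ref{c411}. The first key observation is that {\rm (SAC)} descends to localizations: if $R_{\mathfrak{m}}$ satisfies {\rm (SAC)} for every maximal ideal $\mathfrak{m}$ of $R$, then so does $R$. Indeed, $\Ext^{>0}_R(M,R)=0$ and $\Ext^{\gg 0}_R(M,M)=0$ pass to each localization, the {\rm (SAC)} at $R_{\mathfrak{m}}$ then gives $\Ext^{>0}_{R_{\mathfrak{m}}}(M_{\mathfrak{m}},M_{\mathfrak{m}})=0$ for all maximal $\mathfrak{m}$, and commutation of $\Ext$ with localization forces $\Ext^{>0}_R(M,M)=0$. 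So it suffices to verify {\rm (SAC)} for $R_{\mathfrak{m}}$ for each maximal ideal $\mathfrak{m}$ of $R=A[\mathbf{X}]/I_s(\mathbf{X})$.

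Next I would fix such $\mathfrak{m}$ and let $\mathfrak{M}$ be its preimage in $A[\mathbf{X}]$. Setting $\mathfrak{n}_0=\mathfrak{M}\cap A$ and choosing a maximal ideal $\mathfrak{n}$ of $A$ with $\mathfrak{n}_0\subseteq\mathfrak{n}$, the hypothesis on $A$ makes $A_{\mathfrak{n}}$ a complete intersection; hence so is its further localization $A_{\mathfrak{n}_0}$, and consequently $S:=A[\mathbf{X}]_{\mathfrak{M}}$, a localization of the polynomial ring $A_{\mathfrak{n}_0}[\mathbf{X}]$, is a Gorenstein local ring. By Corollary \ref{a0.4}, $S$ itself satisfies {\rm (SAC)}, and one has $R_{\mathfrak{m}}=S/I_s(\mathbf{X})S$.

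In the favourable case where every $X_{ij}$ lies in $\mathfrak{M}$, the variables $\{X_{ij}\}$ form a regular sequence on $S$ and Corollary \ref{c411}, applied with all $\alpha_{ij}=1$, delivers {\rm (SAC)} for $R_{\mathfrak{m}}$ at once. Otherwise some $X_{ij}$, say $X_{11}$, is a unit in $S$; a standard Schur-complement step (row and column operations after dividing by $X_{11}$) replaces the matrix by an $(s-1)\times (t-1)$ matrix $\mathbf{Y}$ with $I_s(\mathbf{X})S=I_{s-1}(\mathbf{Y})S$, whose entries $Y_{ij}=X_{ij}-X_{11}^{-1}X_{i1}X_{1j}$ ($i,j\ge 2$) are a polynomial change of the remaining variables, hence still extend to a regular sequence of $S$. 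The numerical hypothesis $2s\le t+1$ becomes $2(s-1)\le (t-1)+1$, which still holds. Iterating this reduction finitely many times lands us at a smaller matrix $\mathbf{Z}$ of size $s'\times t'$ with $2s'\le t'+1$, whose entries all belong to $\mathfrak{M}$ and form a regular sequence on $S$; Corollary \ref{c411} combined with {\rm (SAC)} for $S$ then finishes the argument.

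The hard part will be the Schur-complement step: one must check carefully that after inverting a unit matrix entry, the newly produced entries genuinely extend to a regular sequence of $S$ (so that the hypotheses of Corollary \ref{c411} remain in force), and one must argue that iterating this reduction terminates with every residual entry inside $\mathfrak{M}$, while monitoring that the inequality $2s'\le t'+1$ and the dimension count $s'\ge 1$ (ensuring $R_{\mathfrak{m}}\ne 0$) are preserved. Everything else is already in hand: the local nature of {\rm (SAC)}, Corollary \ref{a0.4} for {\rm (SAC)} of a local complete intersection, and Corollary \ref{c411} for the passage from $S$ to $S/I_s(x_{ij}^{\alpha_{ij}})$.
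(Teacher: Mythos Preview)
Your proposal is correct and follows essentially the same route as the paper's proof: both reduce to the local case, invoke the Schur-complement reduction from \cite[(2.4) Proposition]{BV2} to strip off a unit entry when one exists, and apply Corollary~\ref{c411} (the {\rm(SAC)} analogue of \cite[Proposition 2.11]{K}, which is precisely what the paper means by ``using Theorem~\ref{a0.5}'') once all surviving entries lie in the maximal ideal. One small slip: you assert that $S=A[\mathbf{X}]_{\mathfrak{M}}$ is ``Gorenstein'' and then invoke Corollary~\ref{a0.4}, which concerns complete intersections; your own argument (localization of a polynomial extension of the complete intersection $A_{\mathfrak{n}_0}$) in fact shows $S$ is a local complete intersection, so simply say that.
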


\begin{proof}
This can be proved in the same way of the proof of \cite[Theorem 2.16]{K}, just by replacing with (ARC) with (SAC) and using Theorem \ref{a0.5} .
\end{proof}

\section{Gluing of numerical semigroups}\label{sectionglue}

In this section, we provides a new class of rings satisfying (SAC) via the gluing of numerical semigroup rings. For the notion of gluing of numerical semigroups, one can consult, for example,  \cite[p.130]{rg}.
We use the following lemma.  

\begin{lem} \label{l47}
Let $(A, \fkm, k)$ be a local ring. Let $n\ge 2$ be an integer and $a\in \fkm$. Set 
\[
R:=A[X]/(X^n-a),
\] 
where $A[X]$ denotes the polynomial ring over $A$. We denote by $x$ the image of $X$ in $R$. Then we have the following.
\begin{enumerate}[\rm(1)] 
\item $R$ is a local ring with the maximal ideal $\fkm R + xR$ and a free $A$-module of rank $n$. \item Suppose that $A$ is a Cohen-Macaulay ring and $I$ is an Ulrich ideal of $A$. Then $IR$ is also an Ulrich ideal of $R$. 
\item If $A$ is a Cohen-Macaulay ring and $\m$ is an Ulrich ideal of $A$ (equivalently, $A$ has minimal multiplicity when $k$ is infinite), then $\m R$ is an Ulrich ideal of $R$ such that $R/\fkm R$ is a hypersurface. 
\end{enumerate}  

\end{lem}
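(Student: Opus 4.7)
My plan is to handle the three parts in order, with most of the substantive work in part (2); part (3) will then follow almost immediately.

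For part (1), I would first note that since $X^{n}-a$ is monic of degree $n$, the quotient $R$ is automatically a free $A$-module of rank $n$ with basis $\{1,x,x^{2},\dots,x^{n-1}\}$. To identify the maximal ideals of $R$, I would reduce modulo $\fkm$: since $a\in\fkm$, we get $R/\fkm R\cong k[X]/(X^{n}-\bar a)=k[X]/(X^{n})$, which is local with unique maximal ideal $(\bar x)$. Since $R$ is a finite (hence integral) extension of $A$, every maximal ideal of $R$ contracts to $\fkm$, so the maximal ideals of $R$ correspond bijectively to those of $R/\fkm R$. Thus $R$ is local with maximal ideal $\fkn=\fkm R+xR$.

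For part (2), I would verify the requirements of Definition \ref{Ulrich} in turn. First, $R$ is Cohen-Macaulay: because $R$ is $A$-free and hence $A$-flat, a maximal $A$-regular sequence in $\fkm$ remains regular on $R$ and lies in $\fkn$, so $\depth_{\fkn}R\geq\dim A=\dim R$. Second, $IR$ is $\fkn$-primary, since $R/IR$ is a finitely generated module over the Artinian ring $A/I$, hence Artinian. Third, if $Q=(q_{1},\dots,q_{d})\subseteq I$ is a parameter ideal of $A$ with $I^{2}=QI$, then $QR=(q_{1},\dots,q_{d})R$ is an ideal of $R$ generated by $d=\dim R$ elements and containing a system of parameters (since $R/QR$ is finite over the Artinian ring $A/Q$), so it is a parameter ideal of $R$ contained in $IR$, and $(IR)^{2}=I^{2}R=(QI)R=(QR)(IR)$. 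Fourth, flatness of $R$ over $A$ combined with freeness of $I/I^{2}$ over $A/I$ gives
\[
IR/(IR)^{2}\cong(I/I^{2})\otimes_{A}R\cong(A/I)^{\oplus\mu_{A}(I)}\otimes_{A}R\cong(R/IR)^{\oplus\mu_{A}(I)},
\]
so $IR/(IR)^{2}$ is a free $R/IR$-module.

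For part (3), the isomorphism $R/\fkm R\cong k[X]/(X^{n})$ established in part (1) directly shows that $R/\fkm R$ is a hypersurface. The parenthetical equivalence (when $k$ is infinite) between $A$ having minimal multiplicity and $\fkm$ being an Ulrich ideal is standard: $\fkm/\fkm^{2}$ is automatically free over $A/\fkm=k$, so the Ulrich condition reduces to the existence of a parameter ideal $Q\subseteq\fkm$ with $\fkm^{2}=Q\fkm$, which is exactly the reduction-number-one condition characterizing minimal multiplicity. Applying part (2) with $I=\fkm$ then yields that $\fkm R$ is Ulrich in $R$.

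The main obstacle is really concentrated in part (2): one must check that \emph{all} defining properties of an Ulrich ideal are preserved under the base change $A\to R$, and each of these checks (Cohen-Macaulayness of $R$, parameter-ideal property of $QR$, tensor-product computation of $IR/(IR)^{2}$) relies crucially on the $A$-freeness of $R$ coming from the monicity of $X^{n}-a$. No individual step is deep, but keeping the roles of $A$-flatness versus base change on the Artinian quotient straight is what makes the argument nontrivial.
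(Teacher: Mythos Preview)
Your proposal is correct and follows essentially the same approach as the paper. The only minor stylistic difference is that you establish locality of $R$ in part (1) and the hypersurface property in part (3) simultaneously via the explicit isomorphism $R/\fkm R\cong k[X]/(X^{n})$, whereas the paper argues locality directly (any maximal ideal $\p$ contains $\fkm R$ by integrality, and $x^{n}=a\in\p$ forces $x\in\p$) and then observes in (3) that the maximal ideal of the Artinian ring $R/\fkm R$ is principal; your part (2) is considerably more detailed than the paper's, which simply records $I^{2}R=QIR$ and freeness of $IR/I^{2}R$ with a reference to \cite[Lemma 4.1]{GTT2}.
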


\begin{proof} 
(1) and (2) are possibly known, but we include a proof for the sake of completeness. 

(1): 
We claim that $\{1,x,\cdots,x^{n-1}\}$ is a free basis of $R$ as an $A$-module. It is clear that $R=A+\sum_{i=1}^{n-1} Ax^i$. Assume that $b_0+\sum_{i=1}^{n-1}b_ix^i=0$ in $R$ for some $b_0,b_1,\dots,b_{n-1}\in A$. Then, $b_0+\sum_{i=1}^{n-1}b_iX^i\in (X^n-a)A[X]$ in $A[X]$. 
It follows that $b_0+\sum_{i=1}^{n-1}b_iX^i=g(X)(X^n-a)$ for some $g(X)\in A[X]$. If at least one $b_i$ is non-zero, then the left hand side is a non-zero polynomial in $A[X]$ of degree $\le n-1$.  Hence $g(X)\ne 0$. But this leads a contradiction that the degree of the polynomial of the left hand side is $\deg(g(X)(X^n-a))\ge n$. Therefore we must have $b_i=0$ for all $0\le i\le n-1$. Thus $\{1,x,\cdots,x^{n-1}\}$ is a free basis of $R$ over $A$. 

Now let $\p$ be a maximal ideal of $R$. Since $A\hookrightarrow R$ is integral, $\m=\p \cap A$. Hence $\m R \subseteq \p$. In addition, since $x^n=a\in \fkm R\subseteq \p$, we have $x\in \p$. It follows that $xR+\m R\subseteq \p$. Since $xR+\m R$ is a maximal ideal of $R$, we have $\p=\m R+xR$ and hence $R$ is local.  

(2): This is straightforward and follows by an argument similar to \cite[Lemma 4.1]{GTT2}. Indeed, let $I$ be an Ulrich ideal of $A$ with a reduction $Q$. Then $I^2R=QIR$, and $IR/I^2R$ is a free $R/IR$-module. 

(3): By (2), $\m R$ is an Ulrich ideal of $R$. On the other hand, $R/\m R$ is an Artinian ring with the maximal ideal $(\fkm R + xR)/\fkm R$. It follows that $R/\m R$ is a hypersurface.  
\if   
We have natural map $A \xrightarrow{a\mapsto a+(X^n-a)} A[X]/(X^n-a)$; to see this is injectie, we need to show $A\cap (X^n-a)=0$ in $A[X]$. So let $b\in A \cap (X^n-a)$. Then, $b=f(X)(X^n-a)$ for some $f(X)\in A[X]$. Now if $b\ne 0$, then $f(X)\ne 0$, then since $X^n-a$ is a monic polynomial, so $0=\deg(b)=deg (f(X)(X^n-a))=n+\deg f(X)$,  contradicting $n\ge 2$. Thus we must have $b=0$. 
This establishes injectivity of the natural map $A \xrightarrow{a\mapsto a+(X^n-a)} A[X]/(X^n-a)$. Now we claim that $\{1,x,\cdots,x^{n-1}\}$ is a free basis of $R$ as an $A$-module, where $x$ is the image of $X$ in $R$. Since $x^n=a$ in $R$, so  clearly $R=A+\sum_{i=1}^{n-1} Ax^i$ holds. Now let if possible $b_0+\sum_{i=1}^{n-1}b_ix^i=0$ in $R$ for some $b_0,b_1,\cdots,b_{n-1}\in A$. Then, $b_0+\sum_{i=1}^{n-1}b_iX^i\in (X^n-a)A[X]$ in $A[X]$. So, $b_0+\sum_{i=1}^{n-1}b_iX^i=g(X)(X^n-a)$ for some $g(X)\in A[X]$. If at least one $b_i$ is non-zero, then the left hand side is a non-zero polynomial in $A[X]$ of degree $\le n-1$, hence we would also have $g(X)\ne 0$, but then $\deg(g(X)(X^n-a))\ge n$, contradiction! Thus we must have $b_i=0$ for all $0\le i\le n-1$. Thus $\{1,x,\cdots,x^{n-1}\}$ is a free basis of $R$ over $A$.  So $A \to R$ is a flat map with $\m R \subseteq \mathfrak n$. 

Let $\p$ be a maximal ideal of $R$. Since $A\hookrightarrow R$ is integral, so $\m=\p \cap A$. Hence $\m R \subseteq \p$. Also, $x^n=a\in \p$ in $R$, so $x\in \p$ in $R$. Thus $xR+\m R\subseteq \p$. Since $xR+\m R$ is a maximal ideal of $R$, we have $\p=\m R+xR$ and hence $R$ is local.  

Since $\dim R=\dim A$, so $\m R$ is primary to the maximal ideal of $R$, so $R/\m R$ is artinian. This along with $\rmv(R/m R)\le 1$ implies $R/\m R$ is a hypersurface.  
\fi
\end{proof}  


\begin{prop}\label{p48} 
Let $H=\langle a_1, a_2, \dots, a_\ell \rangle$ be a numerical semigroup generated by positive integers $a_1, a_2, \dots, a_\ell$. Choose integers $m, n\in \mathbb{N}$ such that $\gcd(m,n)=1$ and $m\in H$ but $m\ne a_i$ for all $1 \le i \le \ell$. Set $S:=nH + \langle m \rangle$. Let
\begin{center}
 $A:=k[|H|]=k[|t^h \mid h\in H|]$ \quad and \quad $R:=k[|S|]=k[|t^s \mid s\in S|]$ 
\end{center}
denote the numerical semigroup rings of $H$ and $S$ over a field $k$ respectively, where $k[|t|]$ denotes the formal power series ring over $k$. Then we have the isomorphism 
\[
R\cong A[X]/(X^n-t^m)
\]
of rings, where $A[X]$ denotes the polynomial ring over $A$. Hence $R$ is a free $A$-module of rank $n$. Consequently, if $A$ satisfies {\rm (SAC)}, then so does $R$. 
\end{prop}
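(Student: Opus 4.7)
The plan is to establish the ring isomorphism $R \cong A[X]/(X^n - t^m)$ explicitly, then transfer (SAC) using results from earlier sections. For the isomorphism, I would first make $R$ into an $A$-algebra via the ring map $\phi \colon A \to R$ defined by $t^h \mapsto t^{nh}$ for $h \in H$, which is well-defined because $nH \subseteq S$. Since $m \in \langle m \rangle \subseteq S$ we have $t^m \in R$, and $(t^m)^n = t^{nm} = \phi(t^m)$. Hence the ring map $A[X] \to R$ defined by $\phi$ on $A$ and $X \mapsto t^m$ annihilates $X^n - t^m$ and induces a homomorphism $\bar\psi \colon A[X]/(X^n - t^m) \to R$ of $A$-algebras.

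The crux is to show $\bar\psi$ is bijective via a unique decomposition of exponents: every $s \in S$ has a \emph{unique} expression $s = nh + im$ with $h \in H$ and $0 \le i \le n-1$. Existence follows by first writing $s = nh' + jm$ from the definition of $S$, then dividing $j = qn + i$ with $0 \le i \le n-1$, and absorbing $qnm$ into $n(h' + qm)$ where $h' + qm \in H$ (as $m \in H$). Uniqueness uses $\gcd(m, n) = 1$: from $n(h - h') = (i' - i)m$ with $0 \le i, i' \le n-1$ one deduces $n \mid (i' - i)$ and hence $i = i'$. Using this, a formal power series $\sum_{s \in S} c_s t^s \in R$ can be rewritten uniquely as $\sum_{i=0}^{n-1} \phi(a_i)\, t^{im}$ where $a_i = \sum_{h \in H} c_{nh+im}\, t^h \in A$, which proves $\bar\psi$ is an isomorphism. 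In particular, the $A$-basis $\{1, X, \dots, X^{n-1}\}$ of $A[X]/(X^n - t^m)$ transports to an $A$-basis of $R$, so $R$ is free over $A$ of rank $n$.

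For the final (SAC) implication, assume $A$ satisfies (SAC). By Corollary \ref{localiz}, the ring $A[X]_{(\m, X)}$ also satisfies (SAC), where $\m$ is the maximal ideal of $A$. Since $A$ is a domain, $A[X]_{(\m, X)}$ is a local domain of depth $\ge 1$, and $X^n - t^m$ is a monic non-zero-divisor lying in its maximal ideal (because $t^m \in \m$). Applying Corollary \ref{a0.3} with this non-zero-divisor shows that $A[X]_{(\m, X)}/(X^n - t^m)$ satisfies (SAC). Finally, by Lemma \ref{l47}(1), the ring $A[X]/(X^n - t^m) \cong R$ is already local with maximal ideal corresponding to $(\m, X)$, so the above localization is redundant and $R$ itself satisfies (SAC).

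I expect the main obstacle to lie in the explicit verification of the unique decomposition in $S$ and its transfer to a unique $A$-linear decomposition of elements of $R$, since this combines the number-theoretic hypothesis $\gcd(m, n) = 1$ with the combinatorial structure of $S$. The (SAC) step itself is then a short concatenation of the hereditary corollaries already established.
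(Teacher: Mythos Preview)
Your proposal is correct, and your deduction of (SAC) matches the paper's (the paper cites Corollary~\ref{mainseccor3} rather than Corollary~\ref{a0.3}, but the former reduces immediately to the latter here). The genuine difference lies in how the isomorphism $R\cong A[X]/(X^n-t^m)$ is established.

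The paper sets up the same map $\bar\varphi\colon T:=A[X]/(X^n-t^m)\to R$, but proves that $Z:=\Ker\bar\varphi$ vanishes by a Snake Lemma and length argument rather than by explicit decomposition: multiplication by $x$ on the exact sequence $0\to Z\to T\to R\to 0$ corresponds to multiplication by $t^m$ on $R$, the Snake Lemma yields $0\to Z/xZ\to A/t^mA\to R/t^mR\to 0$, and then $\ell_A(A/t^mA)=m=\ell_A(R/t^mR)$ (both computed inside $k[[t]]$) forces $Z/xZ=0$, whence $Z=0$ by Nakayama. Your route instead produces the explicit bijection $H\times\{0,\dots,n-1\}\to S$, $(h,i)\mapsto nh+im$, and reads off both surjectivity and injectivity of $\bar\varphi$ directly from it. Your argument is more elementary and makes the roles of the hypotheses $\gcd(m,n)=1$ (for uniqueness) and $m\in H$ (for existence, to absorb $qnm$ into $nH$) completely transparent; the paper's argument is slicker and avoids coefficient bookkeeping, but uses the hypotheses only implicitly through the rank-one inclusions $A,R\hookrightarrow k[[t]]$. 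As a bonus, your bijection simultaneously verifies surjectivity of $\bar\varphi$, which the paper asserts without comment.
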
  

\begin{proof}
The ring homomorphism $k[|t|] \to k[|t|]; t \mapsto t^n$ induces a ring homomorphism $A\to R; t^h \mapsto t^{nh}$. This also induces the surjective ring homomorphism $\varphi: A[X] \to R$ by corresponding $X \to t^m$. Checking that $(X^n-t^m) \subseteq \Ker \varphi$, we obtain the surjection $\overline{\varphi}: A[X]/(X^n-t^m) \to R$. Consider the commutative diagram 
\[
\xymatrix{
0 \ar[r] & Z:=\Ker \overline{\varphi}\ar[r] \ar[d]^{\cdot x} &T:=A[X]/(X^n-t^m) \ar[d]^{\cdot x} \ar[r]^{\quad \quad \quad \overline{\varphi}} & R\ar[r] \ar[d]^{\cdot t^m}  & 0\\
0 \ar[r] & Z\ar[r]  & T \ar[r]^{\quad \quad \quad \overline{\varphi}} & R \ar[r] & 0 \\
}
\]
of $T$-modules, where $x$ is the image of $X$ in $T$ and $\cdot x$ denotes the multiplication by $x$. By Snake lemma, we obtain the exact sequence 
\[
0 \to Z/XZ \to A/t^m A \to R/t^m R \to 0
\]
of $A$-modules. Note that $\ell_{A}(A/t^m A) = \ell_{A}(k[|t|]/t^m k[|t|]) = m$ and $\ell_{A}(R/t^m R)= \ell_{A}(k[|t|]/t^m k[|t|]) =m$ by the fact that $A$, $R$, and $k[|t|]$ are finitely generated $A$-modules of rank one and \cite[Theorem 14.8]{Mat}. Hence we have $\ell_{A}(Z/XZ) =0$. By Nakayama's lemma, we obtain that $Z=0$. Therefore $\overline{\varphi}: A[X]/(X^n-t^m) \to R$ is an isomorphism. 

Now let $\m$ be the maximal ideal of $A$. Then $(\m,x)R$ is the unique maximal ideal of $R$ by Lemma \ref{l47}(1). Thus, $R\cong R_{(\m,x)}\cong \dfrac{A[X]_{(\m,X)}}{(X^n-t^m)A[X]_{(\m,X)}}$. Therefore, if $A$ satisfies (SAC), then $A[X]_{(\m,X)}$ also satisfies (SAC) by Corollary \ref{localiz}. Since $A[X]_{(\m,X)}$ is an integral domain, $R\cong \dfrac{A[X]_{(\m,X)}}{(X^n-t^m)A[X]_{(\m,X)}}$ satisfies (SAC) by Corollary \ref{mainseccor3}. 
\end{proof}

\begin{Example} Let $e\ge 2$ be an integer, and set $H=\langle e,e+1,\dots,2e-1 \rangle$. Then, $R=k[[H]]$ has minimal multiplicity, hence satisfies (SAC). Then, for any integer $m\ge 2e$ and $n$ such that $\text{gcd}(m,n)=1$, we have that $k[[nH+\langle m\rangle ]]$ satisfies (SAC) by Proposition \ref{p48}. 
\end{Example}

\begin{Example} Let $e\ge 3$ be an integer, and set $H=\langle e,e+1,\dots,2e-2 \rangle$. Then $R=k[[H]]$ has almost minimal multiplicity, i.e, $\rme(R)=\rmv(R)+1$. This equality is equivalent to saying that $\ell_R(\fkm^2/x\fkm)=1$. Hence $\m^3\subseteq x \m$ for a minimal reduction $x$ of $\fkm$, where $\fkm$ denotes the unique maximal ideal of $R$. Consequently $R/(x)$ has radical cube zero. Then $R/(x)$ satisfies (SAC) by  \cite[Theorem 4.1]{JS}. Hence $R$ satisfies (SAC) by Corollary \ref{a0.3}. Then for any integer $m\ge 2e$ (so that $m\in H$ but $m$ is not a minimal generator of $H$) and $n$ such that $\text{gcd}(m,n)=1$, we have that $k[[nH+\langle m\rangle ]]$ satisfies (SAC) by Proposition \ref{p48}. 
\end{Example}

\if0
The following significantly generalizes \cite[Theorem 2.3(3)$\implies$(1)]{K} and the (ARC) part of \cite[Theorem 4.5.(1)]{CT}.

\begin{cor} Let $R$ be a local ring and $I$ be an ideal of finite projective dimension. If $R/I$ satisfies {\rm (ARC)}, then $R$ satisfies {\rm (ARC)}. 
\end{cor}

\begin{proof} Since $R\to R/I$ is a homomorphism of local rings and $\fd_R R/I\le \pd_R R/I<\infty$, we are done by Theorem \ref{11}.  
\end{proof} 
\fi  

\if0
\begin{rem}\label{remarkxxx}
Applying \cite[Proposition 2.1]{K} and Theorem \ref{11} in place  of Corollary \ref{a0.3} and Theorem \ref{7} respectively, and using similar arguments as in Corollaries \ref{poseries}, \ref{localiz}, and \ref{completn}, we have that if one of the following rings satisfies (ARC), then they all do 
	$$R, \widehat R, R[[X]], R[X]_{(\m,X)}.$$ 
\end{rem}  
\fi   


\if0

\section{(SAC) and other notions}\label{section5}

In this section, we discuss the connection of (SAC) with some related vanishing conditions of Ext. Most of this section is devoted to modify an invariant of Diveris (\cite[Definition 2.2]{D}). We  show that for a ring, the finiteness of the invariant is equivalent to the assertion that the ring satisfies (SAC).  

Let $(R,\mathfrak m)$ be a  local ring and $M,$ $N$ be finitely generated $R$-modules. We say that
\begin{enumerate}[\rm(1)] 
	\item $R$ satisfies {\it{trivial $\Ext$-vanishing}} if $\Ext^i(M,N)=0$ for $i\gg 0$ implies either $M$ has finite projective dimension or $N$ has finite injective dimension as an $R$-module. 
	\item $R$ is {\it{$\Ext$-persistent}} if $\Ext^i_R(M,M)=0$ for $i\gg 0$ implies either $M$ has finite projective dimension or $M$ has finite injective dimension as an $R$-module.	
\end{enumerate}

\begin{Remark}\label{ext-persistent}
	\begin{enumerate}[\rm(1)] \label{extper}
		
		\item It is clear from the above definitions that if $R$ satisfies {\it{trivial $\Ext$-vanishing}} then $R$ is {\it{trivial $\Ext$-persistent}}. By the same lines of proof of \cite[Theorem 7.2]{AINSW}, if $R$ is $\Ext$-persistent then $R$ satisfies ${\rm (SAC)}.$
		
		\item If $R$ satisfies ${\rm (SAC)}$ then $R$ may not satisfy {\it{trivial $\Ext$-vanishing}}  in general. In \cite[Corollay 3.3 (2)]{JS}, Jorgensen and Sega constructed an example of a local ring $(R,\mathfrak m)$ with $\mathfrak m^3=0,$  finitely generated $R$-modules $L$ and $T_q$ for any integer $q>0$ such that $\Ext^i_R(L,T_q)\neq 0$ if and only if $i=0,q-1,q.$ Thus $R$ does not satisfy trivial $\Ext$-vanishing. By \cite[Theorem 4.1(1)]{HSV}, $R$ satisfies ${\rm (SAC)}.$ Even if $R$ is a Gorenstein local ring, the assertion that $R$ satisfies ${\rm (SAC)}$ does not imply that $R$ satisfies trivial $\Ext$-vanishing in general. For example, $R=k[|x,y|]/(x^2,y^3)$ then $R$ satisfies (SAC) by Corollary \ref{a0.3} but $R$ does not satisfy trivial $\Ext$-vanishing.
		
		\item\label{extsac} If $(R,\mathfrak m)$ is a Gorenstein local ring of dimension $d\geq 1$, then $R$ satisfies ${\rm (SAC)}$ if and only if $R$ is $\Ext$-persistent.  If  $L$ is a finitely generated $R$-module with $\Ext^{\gg 0}_R(L,L)=0,$ since $R$ is Gorenstein, we have $\Ext^{>d}_R(L,R)=0$ and hence for $i\gg0,$ we have $\Ext^{i}_R(\om^d_R L,\om^d_R L)=0.$
Now $R$ satisfies {\rm (SAC)} implies $\Ext^{>0}_R(\om^d_R L,\om^d_R L)=0.$ Now $\Ext^{>0}_R(\om^d_R L,R)\cong \Ext^{>d}_R(L,R)=0.$ Now by \cite[Corollary 4.6]{CT}, $R$ satisfies ${\rm(ARC)}$ and hence we have $\om^d_R L$ is free $R$-module. Thus $\pd_R L<\infty.$ 
	\end{enumerate}
\end{Remark}   
\fi

\section{Appendix: Modified Diveris' finitistic extension degree}\label{section6}

In this appendix,  we introduce a new invariant which raised from the study of Diveris. In \cite[Definition 2.2]{D}, Diveris defines an invariant called ``finitistic extension degree". Diveris shows that for a Gorenstein ring $R$, the invariant is finite if and only if $R$ satisfies (SAC) (\cite[Corollary 3.2]{D}). Here, we introduce a slightly different invariant and show that it behaves well as finitistic extension degree. (We note that in what follows, a given ring $R$ need not necessarily  be local.)

\begin{dfn}\label{feddef} 
Let $R$ be a Noetherian ring and $M$ a finitely generated $R$-module. Then, 
\[
\extdeg(M):=\sup \{i \mid \Ext_R^i(M, M)\ne 0\}
\]
is called the {\it self-extension degree} of $M$ (\cite[Definition 2.2]{D}). Here, we define the {\it restricted finitistic extension degree} as follows.
\[
\rfed(R):=\sup\{\extdeg(M\oplus R)\mid \extdeg(M\oplus R)<\infty\}.
\] 
For an integer $n\ge 0$, we also define
\[
\rfed_n(R):=\sup\{\extdeg(M\oplus R)\mid \text{$M$ has constant rank $n$ such that } \extdeg(M\oplus R)<\infty\}.
\] 
\end{dfn}


\begin{prop}\label{fed} Let $R$ be a ring having finite Krull-dimension (for example, $R$ is local). Then the following are equivalent.\\
{\rm (1)} $R$ satisfies {\rm (SAC)}. \quad \quad {\rm (2)} $\rfed(R)<\infty$. \quad \quad {\rm (3)} $\rfed(R)\le \dim  R$. 
\end{prop}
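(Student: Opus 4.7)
The plan is to prove the cycle $(3) \Rightarrow (2) \Rightarrow (1) \Rightarrow (3)$; the first implication is immediate from $\dim R < \infty$.

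For $(1) \Rightarrow (3)$: Assume $R$ satisfies (SAC). By Lemma~\ref{b2.1} combined with Remark~\ref{rem1} (which extends the equivalences to the non-local setting), if $M$ is a finitely generated $R$-module with $\extdeg(M \oplus R) < \infty$---equivalently, $\Ext_R^{\gg 0}(M, M \oplus R) = 0$---then $\pd_R M < \infty$ and $\extdeg(M \oplus R) = \pd_R M$. Hence $\rfed(R)$ coincides with the small finitistic projective dimension of $R$. Localizing at a maximal ideal $\fkm$ where $\pd_{R_{\fkm}} M_{\fkm} = \pd_R M$ (as recalled in Remark~\ref{rem1}) and applying the Auslander--Buchsbaum formula yield $\pd_R M \le \depth R_{\fkm} \le \dim R$, so $\rfed(R) \le \dim R$.

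For $(2) \Rightarrow (1)$: Suppose $\rfed(R) < \infty$ and let $M$ be a finitely generated $R$-module with $\Ext_R^{>0}(M, R) = 0 = \Ext_R^{\gg 0}(M, M)$. Set $N := \sup\{i \ge 1 : \Ext_R^i(M, M) \ne 0\}$ (with the convention $\sup \emptyset = -\infty$); the goal is to show $N = -\infty$, which is precisely (SAC). Suppose toward contradiction $N \ge 1$. For each $k \ge 0$ I would form
\[
X_k := M \oplus \syz M \oplus \cdots \oplus \syz^k M,
\]
with syzygies taken in a fixed resolution of $M$ by finitely generated projective modules, and show that $\extdeg(X_k \oplus R) = k + N$; this would force $\rfed(R) \ge k + N$ for every $k \ge 0$, a contradiction.

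The computation of $\extdeg(X_k \oplus R)$ would rely on two shift identities used in tandem. First, $\Ext_R^i(\syz^j M, -) \cong \Ext_R^{i+j}(M, -)$ for $i \ge 1$, which via $\Ext_R^{>0}(M, R) = 0$ kills all the cross terms with $R$ on the right. Second, iterating the long exact sequence obtained by applying $\Hom_R(M, -)$ to $0 \to \syz^s M \to F_{s-1} \to \syz^{s-1} M \to 0$ (with $F_{s-1}$ finitely generated free, and using $\Ext_R^{\ge 1}(M, F_{s-1}) = 0$ which follows from $\Ext_R^{>0}(M, R) = 0$) yields the dimension-shift $\Ext_R^\ell(M, M) \cong \Ext_R^{\ell + j'}(M, \syz^{j'} M)$ for all $\ell \ge 1$ and $j' \ge 0$. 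Combining the two identities gives
\[
\Ext_R^i(\syz^j M, \syz^{j'} M) \;\cong\; \Ext_R^{i + j - j'}(M, M) \qquad (i \ge 1,\ i + j - j' \ge 1),
\]
whose vanishing for $i + j - j' > N$ bounds $\extdeg(X_k \oplus R)$ above by $k + N$, while the distinguished summand $\Ext_R^{k + N}(M, \syz^k M) \cong \Ext_R^N(M, M) \ne 0$ supplies the matching lower bound. The main subtle point is the range $i + j \le j'$ where the second identity does not directly apply, but such terms only contribute indices $i \le j' - j \le k \le k + N$ (using $N \ge 1$), hence do not affect the overall bound; this index bookkeeping is the most delicate part of the argument.
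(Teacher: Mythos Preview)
Your proof is correct. The implication $(1)\Rightarrow(3)$ is essentially the paper's argument, phrased slightly differently.

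For $(2)\Rightarrow(1)$ you take a genuinely different route. You assume $N:=\sup\{i\ge 1:\Ext^i_R(M,M)\ne 0\}\ge 1$ and produce a family $X_k$ with $\extdeg(X_k\oplus R)=k+N$, forcing $\rfed(R)=\infty$. The paper instead fixes $r=\rfed(R)$, considers the single module $\syz_R^1 M\oplus\syz_R^{r+2}M$, observes that its extension degree with $R$ is finite and hence at most $r$, and reads off $\Ext^{r+1}_R(\syz_R^1 M,\syz_R^{r+2}M)\cong\Ext^1_R(\syz_R^{r+1}M,\syz_R^{r+2}M)=0$; this splits the defining sequence of $\syz_R^{r+1}M$, giving $\pd_R M<\infty$ directly. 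The paper's approach is more economical (one well-chosen module, no family, no index bookkeeping), while yours makes transparent \emph{why} failure of (SAC) forces $\rfed(R)$ to be unbounded. Two small simplifications are available in your argument: the module $M\oplus\syz_R^k M$ already satisfies $\extdeg(\,\cdot\oplus R)=k+N$, so the intermediate syzygies in $X_k$ are unnecessary; and the ``subtle case $i+j\le j'$'' you flag never actually occurs in the range $i>k+N$ relevant to the upper bound (since $j'-j\le k<i$ there), so that caveat can be dropped.
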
 

\begin{proof} 
(3) $\Rightarrow$ (2): This is clear. 

(2) $\Rightarrow$ (1): Assume that $\rfed(R)<\infty$ and set $r=\rfed(R)$. Let $M$ be a finitely generated $R$-module such that $\Ext^{>0}_R(M,R)=\Ext^{\gg 0}_R(M,M)=0$. 
We then have $\Ext^{\gg 0}_R(\syz_R^{s}M,\syz_R^{s}M)=\Ext^{\gg 0}_R(M,\syz_R^{s}M)=0$ and $\Ext^{\gg 0}_R(\syz_R^{s}M,R)=0$ for all $s>0$. Thus, $\extdeg(R\oplus \syz_R M \oplus \syz_R^{r+2}M)<\infty$. Now $\rfed(R)=r<\infty$ implies $\Ext^{r+1}_R(R\oplus \syz_R^1 M \oplus \syz_R^{r+2}M,R\oplus \syz_R^1 M \oplus \syz_R^{r+2}M)=0$. Hence,  
\[
\Ext^{r+1}_R(\syz_R^1 M,\syz_R^{r+2}M)=0=\Ext^1_R(\syz_R^{r+1}M,\syz_R^1 \syz_R^{r+1}M)=0.
\] 
It follows that $0 \to \syz_R^{r+1} M \to P_r \to \syz_R^{r} M \to 0$ splits, where $P_r$ is a finitely generated projective $R$-module. Hence, $\syz_R^{r+1} M$ is projective and hence $\pd_R M<\infty$. Now $\Ext^{>0}_R(M,R)=0$ implies $M$ is projective by Remark \ref{rem1}. 

(1) $\Rightarrow$ (3): Assume that $R$ satisfies (SAC). If $\extdeg(M\oplus R)<\infty$ then $\Ext^{\gg 0}_R(M,R\oplus M)=0$. By localizing at a prime ideal $\fkp$ of $R$ and by Proposition \ref{b2.1}, we get $\pd_{R_{\p}} M_{\p}<\infty$ for all $\p \in \spec(R)$. Consequently, 
\[
\pd_R M=\sup\{\pd_{R_{\p}} M_{\p}:\p \in \spec(R)\}\le \sup\{\depth R_{\p}: \p \in \spec(R)\}\le \dim R,
\] 
where the first inequality follows from the Auslander-Buchsbaum formula. This concludes that $\Ext^{>\dim R}_R(M\oplus R, M\oplus R)=0$ and thus $\rfed(R)\le \dim R<\infty$. 
\end{proof}  

We record a lemma, which follows by applying $\Hom_R(M,-)$ to an exact sequence $0\to \syz_R N \to F \to N \to 0$,  where $F$ is a finitely generated projective $R$-module, and using standard syzygy shifting argument. This lemma is needed for proving our next result, and will also be used in the proof of the main result of the next section.   

\begin{lem}\label{6} 
Let $R$ be a ring and $M,N$ finitely generated $R$-modules. Then, the following hold.
\begin{enumerate}[\rm(1)]
    \item If $\Ext^{\gg 0}_R(M,N\oplus R)=0$, then $\Ext^{\gg 0}_R(\syz^s_R M,  \syz^n_R N \oplus R)=0$ for all $s,n\ge 0$.
    
    \item   If $\Ext^{>0}_R(M,N\oplus R)=0$, then $\Ext^{>0}_R(\syz^n_R M,\syz^n_R N \oplus R)=0$ for all $n\ge 0$.
\end{enumerate}  
\end{lem}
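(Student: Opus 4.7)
The plan is to combine two standard syzygy-shift isomorphisms, one for each slot of $\Ext$. Recall that if $0 \to \syz_R L \to P \to L \to 0$ is a short exact sequence with $P$ a finitely generated projective $R$-module, then applying $\Hom_R(-,X)$ yields $\Ext^i_R(\syz_R L, X) \cong \Ext^{i+1}_R(L, X)$ for all $i \geq 1$ and every $R$-module $X$; iterating gives $\Ext^i_R(\syz^s_R L, X) \cong \Ext^{i+s}_R(L, X)$ for $i \geq 1$ and $s \geq 0$. Shifting the \emph{second} argument, on the other hand, uses the vanishing hypothesis on $\Ext_R(M,R)$: applying $\Hom_R(M,-)$ to $0 \to \syz_R N \to Q \to N \to 0$ with $Q$ finitely generated projective, the module $Q$ is a direct summand of some $R^{\oplus k}$, so the assumed vanishing of $\Ext_R(M,R)$ upgrades to $\Ext^i_R(M,Q)=0$ in the corresponding range, collapsing the long exact sequence to isomorphisms $\Ext^i_R(M,N) \cong \Ext^{i+1}_R(M, \syz_R N)$.

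For part (1), I would shift the two arguments independently, since $\Ext^{\gg 0}$-vanishing is insensitive to a bounded shift. The first-argument shift gives both $\Ext^{\gg 0}_R(\syz^s_R M, R)=0$ and $\Ext^{\gg 0}_R(\syz^s_R M, N)=0$ from the hypothesis. Then an induction on $n$, using the second-argument collapse above with $\syz^s_R M$ in place of $M$, produces $\Ext^{\gg 0}_R(\syz^s_R M, \syz^n_R N) = 0$ for all $n \geq 0$, completing the proof.

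For part (2), the simultaneous shift on both arguments is essential, and the proof will go by induction on $n$. The delicate point is the $i=1$ boundary: the second-argument shift only yields $\Ext^{>1}_R(M,\syz_R N)=0$ from $\Ext^{>0}_R(M,N)=0$, since the $i=0$ piece of the long exact sequence does not degenerate. Simultaneously applying the first-argument shift to $\syz_R N$ gives $\Ext^i_R(\syz_R M, \syz_R N) \cong \Ext^{i+1}_R(M, \syz_R N)$ for $i\geq 1$, which recovers the full vanishing $\Ext^{>0}_R(\syz_R M, \syz_R N)=0$. Combined with $\Ext^i_R(\syz_R M, R) \cong \Ext^{i+1}_R(M, R) = 0$ for $i \geq 1$, this completes the base case; the inductive step applies the same one-step argument to the pair $(\syz^n_R M, \syz^n_R N)$.

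The main obstacle is the index bookkeeping in part (2): the degree of vanishing lost in passing from $N$ to $\syz_R N$ must be exactly compensated by the degree gained in passing from $M$ to $\syz_R M$. Once these two shifts are seen to match up degree by degree, the lemma follows by routine induction.
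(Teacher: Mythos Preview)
Your proposal is correct and follows essentially the same approach as the paper, which simply records the lemma as a consequence of applying $\Hom_R(M,-)$ to the syzygy sequence $0\to \syz_R N\to F\to N\to 0$ (using that $F$ is a summand of a free module so the vanishing against $R$ transfers to $F$) together with the standard first-argument syzygy shift. Your explicit index bookkeeping in part (2)---losing one degree in the second argument and regaining it in the first---is exactly the mechanism the paper's outlined argument uses.
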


\if0
\begin{proof} 

Consider an exact sequence $0\to \syz_R N \to F \to N \to 0$ for some finitely generated projective $R$-module $F$. Now $F$ is a direct summand of a finitely generated free $R$-module, therefore $\Ext^{\gg 0}_R(M,R)=0$ (resp. $\Ext^{> 0}_R(M,R)=0$) implies $\Ext^{\gg 0}_R(M,F)=0$ (resp. $\Ext^{> 0}_R(M,F)=0$). 

(1) It is enough to prove $\Ext^{\gg 0}_R(M,\syz^n_R N)=0$  for all $n\ge 1$. By induction on $n$, it is enough to prove the case where $n=1$. By applying $\Hom_R(M,-)$ onto the above exact sequence, we see that $\Ext^i_R(M,N)\cong \Ext^{i+1}_R(M,\syz_R N)$ for all $i\gg 0$, hence the claim follows.

(2) By induction on $n$, it is enough to prove that $\Ext^{>0}_R(\syz_R M,\syz_R N \oplus R)=0$. We only need to prove $\Ext^{>0}_R(\syz_R M,\syz_R N)=0$. By applying $\Hom_R(M,-)$ onto the above exact sequence, we see that $\Ext^i_R(M,N)\cong \Ext^{i+1}_R(M,\syz_R N)$ for all $i\ge 1$, hence we have $\Ext^i_R(M,N)\cong \Ext^{i}_R(\syz_R M,\syz_R N)$ for all $i\ge 1$. 
\if0
(1) It is enough to prove $\Ext^{\gg 0}_R(M,\syz^n_R N)=0$  for all $n\ge 1$. By induction on $n$, it is enough to prove the case $n=1$. Now we have an exact sequence $0\to \syz_R N \to F \to N \to 0$ for some finitely generated projective $R$-module $F$. Now $F$ is a direct summand of a finitely generated free $R$-module, therefore $\Ext^{\gg 0}_R(M,R)=0$ implies $\Ext^{\gg 0}_R(M,F)=0$. Consequently, $\Ext^i_R(M,N)\cong \Ext^{i+1}_R(M,\syz_R N)$ for all $i\gg 0$. Thus the claim follows. 

(2) By induction on $n$, it is enough to prove that $\Ext^{>0}_R(\syz_R M,\syz_R N \oplus R)=0$. We only need to prove $\Ext^{>0}_R(\syz_R M,\syz_R N)=0$. We have an exact sequence $0\to \syz_R N \to F \to N \to 0$ for some finitely generated projective $R$-module $F$. Now $F$ is a direct summand of a finitely generated free $R$-module, therefore $\Ext^{> 0}_R(M,R)=0$ implies $\Ext^{> 0}_R(M,F)=0$. Consequently, $\Ext^i_R(M,N)\cong \Ext^{i+1}_R(M,\syz_R N)$ for all $i\ge 1$. Thus $\Ext^i_R(M,N)\cong \Ext^{i}_R(\syz_R M,\syz_R N)$ for all $i\ge 1$.  \fi  
\end{proof}  
\fi  

\begin{prop}\label{fedn} Let $R$ be a ring having finite Krull-dimension (for example, $R$ is local). Then, the following are equivalent.  
\begin{enumerate}[\rm(1)]
\item $R$ satisfies {\rm (SACC)}. 

\item $\rfed_n(R)<\infty$ for every integer $n\ge 0$. 

\item $\rfed_n(R)<\infty$ for some  integer $n\ge 0$.  

\end{enumerate}  
\end{prop}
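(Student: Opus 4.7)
The plan is to prove the three conditions are cyclically equivalent. Since $(2)\Rightarrow(3)$ is immediate from the definitions, the substantive content lies in $(1)\Rightarrow(2)$ and $(3)\Rightarrow(1)$, and both will adapt two ideas already used in the paper: the syzygy-plus-summation trick of Proposition~\ref{fed} (to bound $\extdeg$) and the rank-reduction trick of Proposition~\ref{saccrank} (to trade a module of arbitrary constant rank for one of rank exactly $n$).

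For $(1)\Rightarrow(2)$, fix $n\ge 0$ and let $M$ be of constant rank $n$ with $\extdeg(M\oplus R)<\infty$, i.e.\ $\Ext^{\gg 0}_R(M,M\oplus R)=0$. By Remark~\ref{zzz2.2} (the (GARC)-analog of (SACC)), the hypothesis (SACC) applied to $M$ gives $\pd_R M<\infty$; the localization-plus-Auslander--Buchsbaum argument used in the $(1)\Rightarrow(3)$ step of Proposition~\ref{fed} then bounds $\pd_R M\le\dim R$, so $\extdeg(M\oplus R)\le\dim R$ and hence $\rfed_n(R)\le\dim R<\infty$.

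The main work is $(3)\Rightarrow(1)$. Fix $n$ with $r:=\rfed_n(R)<\infty$ and let $M$ have constant rank with $\Ext^{>0}_R(M,R)=0=\Ext^{\gg 0}_R(M,M)$; the aim is to show $M$ is projective. I may assume $M$ is non-projective, so every $\syz^k_R M$ is torsion-free of constant positive rank $e_k$. Following Proposition~\ref{saccrank}, I set
\[
N:=(\syz^1_R M)^{\oplus n}\oplus \syz^{r+2}_R M,
\]
which has constant rank $e=ne_1+e_{r+2}\ge n+1$, pick a free submodule of $N$ of rank $e-n$, and form the resulting short exact sequence $0\to R^{\oplus(e-n)}\to N\to L\to 0$ with $L$ of constant rank $n$. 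From $\Ext^{>0}_R(N,R)=0$ and $\Ext^{\gg 0}_R(N,N)=0$ (both consequences of Lemma~\ref{6} applied to the hypotheses on $M$), standard long-exact-sequence chases yield $\extdeg(L\oplus R)<\infty$, hence $\extdeg(L\oplus R)\le r$ by the hypothesis on $\rfed_n$. Applying $\Hom_R(-,L)$ and then $\Hom_R(N,-)$ to the short exact sequence propagates $\Ext^{>r}_R(L,L)=0$ back to $\Ext^{>r}_R(N,N)=0$, and in particular $\Ext^{r+1}_R(\syz^1_R M,\syz^{r+2}_R M)=0$ since both are direct summands of $N$. By syzygy-shift this equals $\Ext^1_R(\syz^{r+1}_R M,\syz^1(\syz^{r+1}_R M))=0$, which splits off $\syz^{r+1}_R M$ from a minimal free resolution and makes it projective; hence $\pd_R M<\infty$, and Remark~\ref{rem1} together with $\Ext^{>0}_R(M,R)=0$ forces $M$ projective.

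The only genuine difficulty will be bookkeeping: arranging the multiplicities in $N$ so that the rank $e\ge n$, selecting the free submodule inside $N$ to make the subsequent Ext-vanishing chases go through as in Proposition~\ref{saccrank}, and correctly invoking the syzygy-shift identification $\Ext^{r+1}_R(\syz^1_R M,\syz^{r+2}_R M)\cong \Ext^1_R(\syz^{r+1}_R M,\syz^{r+2}_R M)$. No conceptually new input beyond Propositions~\ref{fed} and \ref{saccrank} is required.
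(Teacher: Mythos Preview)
Your proof is correct and follows essentially the same approach as the paper's, combining the rank-reduction construction from Proposition~\ref{saccrank} with the extension-degree argument of Proposition~\ref{fed}. The only difference is cosmetic: you take $N=(\syz^1_R M)^{\oplus n}\oplus\syz^{r+2}_R M$ where the paper uses $\syz^{r+3}_R M$, and your indexing is one step tighter because you exploit the full strength of $\Ext^{>0}_R(N,R)=0$ (rather than merely $\Ext^{\gg 0}$) in the final $\Hom_R(N,-)$ step.
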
  

\begin{proof} $(1) \Rightarrow (2)$: We assume that $R$ satisfies (SACC). 
Let $M$ be a finitely generated module with constant rank and $\extdeg (M\oplus R)<\infty$. Then $\Ext^{> j}_R(M,M\oplus R)=0$ for some $j>0$. Thus $\Ext^{>0}_R(\syz_R^j M,R)=0=\Ext^{>0}_R(\syz_R^jM,M)$ and hence $\Ext^{\gg 0}_R(\syz_R^j M,\syz_R^{j+1}M)=0$ follows from Lemma \ref{6}. Since $\syz_R^j M$ has constant rank and $R$ satisfies (SACC), we get $\Ext^{1}_R(\syz_R^jM,\syz_R^{j+1}M)=0$. Thus $\syz_R^jM$ is projective and hence $M$ has finite projective dimension. Thus $\extdeg(M\oplus R)\le \pd_R M\le \dim R$. It follows that $\rfed_n(R)\le \dim R<\infty$.

$(2)\Rightarrow (3)$ is obvious.   

$(3) \Rightarrow (1)$: Assume that there exists an integer $n\ge 0$ such that $\rfed_n(R)<\infty$ and we prove that $R$ satisfies (SACC).  Let $r=\rfed_n(R)$ and $M$ be a finitely generated $R$-module of  constant rank such that $\Ext^{>0}_R(M,R)=0=\Ext^{\gg 0}_R(M,M)$. We prove that $M$ has finite projective dimension. Suppose the contrary. 
Then $\syz_R^jM\ne 0$ for all $j>0$  and 	$\Ext_R^{>0}(\om^j_R M, R)=0= \Ext_R^{\gg 0}(\om^j_R M, \om^j_R M)$ by Lemma \ref{6}. For $j\geq 0$, set $e_j=\text{rank } \om^j_R M$. Note that $\syz_R^j M$ is torsion-free and non-zero for each $j>0$; hence, $e_j\ge 1$.   
	Let $X:=(\om^1_R M)^{\oplus n}\oplus \om^{r+3}_R M$. Then $X$ has rank $e:=ne_1+e_{r+3}\geq n+1$ and $X$ has a free submodule of rank $e$ (\cite[Proposition 1.4.3]{BH}). Consider a short exact sequence of $R$-modules 
	\begin{align}\label{eq52}
	0\longrightarrow R^{\oplus (e-n)}\longrightarrow X\longrightarrow L\longrightarrow 0,
	\end{align}
	 where $L$ has rank $n$. 
 First applying $\Hom_R(X,-)$ to \eqref{eq52}, we get $\Ext^{\gg 0}_R(X,L)=0.$ Again applying $\Hom_R(-,L)$ to \eqref{eq52}, we get $\Ext^{\gg 0}_R(L,L)=0.$ Since $\Ext^{\gg 0}_R(X,R)=0$ implies $\Ext^{\gg 0}_R(L,R)=0$ and by the hypothesis that $r<\infty$, we get $\Ext^{\geq r+1}_R(L,L)=0.$ 
Applying  $\Hom_R(-,L)$ to \eqref{eq52}, we have $\Ext^{\geq r+1}_R(X,L)=0.$ Finally applying $\Hom_R(X,-)$ to \eqref{eq52}, we get $\Ext^{\geq r+2}_R(X,X)=0.$  Therefore, recalling $X=(\om^1_R M)^{\oplus n}\oplus \om^{r+3}_R M$, we obtain that  
$$0=\Ext^{r+2}_R(\om^1_R M,\om^{r+3}_R M)\cong \Ext^{1}_R(\om^{r+2}_R M,\om^{r+3}_R M).$$
This implies that $\om^{r+2}_R M$ is a projective $R$-module. This is a contradiction. 

Hence, $M$ has finite projective dimension. Then,  $\Ext^{>0}_R(M,R)=0$ implies $M$ is a projective $R$-module by Remark \ref{rem1}. Therefore, we have $\Ext^{> 0}_R(M,M)=0$. 
\end{proof}  

From Corollary \ref{b2.4} and Propositions \ref{fed} and \ref{fedn}, we get the following.

\begin{thm}\label{fedth} Let $R$ be a local ring of positive depth. Then the following are equivalent.
\begin{enumerate}[\rm(1)] 
\item $R$ satisfies {\rm (SAC)}.
\item $\rfed(R)<\infty$.
\item There is an integer $n\ge 0$ such that $\rfed_n(R)<\infty$.
\end{enumerate}

\end{thm}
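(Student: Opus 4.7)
The plan is to observe that Theorem \ref{fedth} is essentially a direct combination of three already-established results: Corollary \ref{b2.4}, Proposition \ref{fed}, and Proposition \ref{fedn}. No new technical work is required; the only subtlety is organizing the equivalences correctly and noting where the positive-depth hypothesis enters.

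First I would dispatch the equivalence (1) $\Leftrightarrow$ (2). This follows immediately from Proposition \ref{fed}, which holds for any ring of finite Krull dimension (in particular, any local ring), without any depth hypothesis. In fact Proposition \ref{fed} gives the sharper bound $\rfed(R)\le\dim R$, so the positive-depth assumption plays no role here.

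Next I would handle the equivalence (1) $\Leftrightarrow$ (3). Here the positive-depth hypothesis is essential. The plan is to route through (SACC): by Corollary \ref{b2.4}, the assumption $\depth R>0$ guarantees that (SAC) and (SACC) are equivalent. Then Proposition \ref{fedn} gives (SACC) $\Leftrightarrow$ ($\rfed_n(R)<\infty$ for some $n\ge 0$), which is exactly condition (3). Chaining these two equivalences yields (1) $\Leftrightarrow$ (3).

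There is no main obstacle, as every ingredient has been proved earlier in the paper. The only point worth emphasizing in the write-up is that the positive-depth hypothesis is needed solely for the (1) $\Leftrightarrow$ (3) half, to upgrade the constant-rank statement (SACC) to the general statement (SAC) via Corollary \ref{b2.4}; for (1) $\Leftrightarrow$ (2), no such upgrade is required since Proposition \ref{fed} already treats all finitely generated modules directly.
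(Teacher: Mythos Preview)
Your proposal is correct and matches the paper's own argument exactly: the paper simply states that Theorem \ref{fedth} follows from Corollary \ref{b2.4} together with Propositions \ref{fed} and \ref{fedn}. Your additional remark about where the positive-depth hypothesis is actually used is a helpful clarification but does not depart from the paper's approach.
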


\begin{ac}
The authors are grateful to Olgur Celikbas. He introduced the paper \cite{CT} to the authors and gave helpful comments.  
\end{ac}

\end{document}